\long\def \forget#1{}
\theoremstyle{plain}
\newtheorem{Lemma}{Lemma}[section]
\newtheorem{Theorem}[Lemma]{Theorem}
\newtheorem{Proposition}[Lemma]{Proposition}
\theoremstyle{definition}
\newcommand{\DS}{\displaystyle}
\newcommand{\TS}{\textstyle}
\let\setminus\smallsetminus
\newcommand{\es}{\enspace}
\newcommand{\invlim}[1][]{\ifthenelse{\equal{#1}{}}
{\DS \lim_{\longleftarrow}}
{\DS \lim_{\underset{#1}{\longleftarrow}}}
}
\newcommand{\dirlim}[1][]{\ifthenelse{\equal{#1}{}}
{\DS \lim_{\longrightarrow}}
{\DS \lim_{\underset{#1}{\longrightarrow}}}
}
\newcommand{\ol}[1]{{\overline{#1}}}
\newcommand{\BOne} {{\mathchoice{\hbox{\rm1\kern-2.7pt l\kern.9pt}}
{\hbox{\rm1\kern-2.7pt l\kern.9pt}}
{\hbox{\scriptsize\rm1\kern-2.3pt l\kern.4pt}}
{\hbox{\scriptsize\rm1\kern-2.4pt l\kern.5pt}}}}
\newcommand{\BC}{{\mathbb{C}}}
\newcommand{\BF}{{\mathbb{F}}}
\newcommand{\BN}{{\mathbb{N}}}
\newcommand{\BT}{{\mathbb{T}}}
\newcommand{\CS}{{\mathcal{S}}}
\DeclareMathOperator{\cha}{char}
\DeclareMathOperator{\ord}{ord}
\DeclareMathOperator{\Tr}{Tr}
\DeclareMathOperator{\md}{{mod}}
\begin{document}
\author{Stephan Baier, Rajneesh Kumar Singh}
\address{Stephan Baier, Ramakrishna Mission Vivekananda Educational Research Institute, G. T. Road, PO Belur Math, Howrah, West Bengal 711202, 
India; email: email$_{-}$baier@yahoo.de}
\address{Rajneesh Kumar Singh, Ramakrishna Mission Vivekananda Educational Research Institute, G. T. Road, PO Belur Math, Howrah, West Bengal 711202, India; rajneeshkumar.s@gmail.com}
\title{THE LARGE SIEVE WITH SQUARE MODULI IN FUNCTION FIELDS}
\subjclass[2010]{11L40, 11N35}
\maketitle
\begin{abstract} We prove a lower and an upper bound for the large sieve with square moduli in function fields. These bounds correspond to bounds for the classical large 
sieve with square
moduli established in \cite{BaLyZh} and \cite{BaZh}. Our lower bound in the function field setting contradicts an upper bound obtained in \cite{BaSi}. 
Indeed, in \cite{BaSierr} 
we pointed out an error in \cite{BaSi}. 
\end{abstract}
\tableofcontents
\bigskip

\newpage
\title{}
\maketitle

\section{Introduction}
In \cite{BaSi}, we investigated the large sieve with restricted sets of moduli (in particular, power moduli) for function fields. Unfortunately, as pointed out in
\cite{BaSierr}, there is a serious error 
in this paper. Indeed, in section 3 of the present paper, we shall derive a lower bound for the large sieve with square moduli  
which contradicts an upper bound obtained in \cite{BaSi}. 

The plan for this paper is as follows.
In the next section, we shall provide general notations. A counterexample to \cite[Corollary 5.2.]{BaSi} for the case of 
square moduli will be given in
section 3. This is the function field analog of a lower bound for the classical large sieve with square moduli in 
\cite{BaLyZh}. From section 4 onward, we shall start a thorough investigation of the large sieve with square moduli for function fields. 
Along the lines in \cite{BaZh}, which deals with the classical case, we shall obtain a corresponding result for the function field case. 

\section{Notations} 
This section is essentially copied from \cite[section 2]{BaSi}, but here we confine ourselves to the one-dimensional case. We collect together notations 
and preliminaries mostly drawn from \cite{Hsu}. 

Throughout this paper, we assume that $q$ is an odd prime power and hence the characteristic of $\BF_q$ is not equal to 2. 
Let $\BF_q$ be a fixed finite field with $q$ elements of characteristic $p$ and let $\Tr : \BF_q \to \BF_p$ be the trace map.
Let $\BF_q(t)_{\infty}$ be the completion of $\BF_q(t)$ at $\infty$ (i.e. $\BF_q(t)_{\infty}=\BF_q(( 1/t))$).
The absolute value $|\cdot |_{\infty}$ on $\BF_q(t)_{\infty}$ is defined by $|0|_{\infty}:=0$ and 
\begin{align*}
\mathrel \bigg | \sum\limits_{i= -\infty}^n a_i t^i\bigg| := q^n, \es \text{if} \es 0 \neq a_n \in \BF_q.
\end{align*}
We endow the torus $\BT = \BF_q(t)_\infty/\BF_q[t]$ with a metric 
\begin{equation} \label{metric}
\|f\| := \inf_{f'\sim f}|f'|_\infty, 
\end{equation}
where $f'\sim f$ means that $f'-f \in \BF_q[t]$. Note that for all $f\in \BT$, we have $\|f\|\leq 1/q$. We also
define the fractional part by
$$
\left\{ \sum\limits_{i= -\infty}^n a_i t^i\right\} :=\sum\limits_{i= -\infty}^{-1} a_i t^i
$$
and note that
$$
\| f \|=|\{f\}|_{\infty}
$$
for all $f\in \BF_q(t)_{\infty}$. 

The non-trivial additive character $E: \BF_q \to \BC^\times$ is defined by 
\begin{align*}
E(x) = \exp \bigg\{\frac{2\pi i}{p} \Tr(x)\bigg\},
\end{align*}
and the map $e: \BF_q(t)_\infty \to \BC^\times$ is defined by 
\begin{align*}
e\bigg(\sum\limits_{i= -\infty}^n a_i t^i\bigg) = E(a_{-1}).
\end{align*}
This map $e$ is a non-trivial additive character for $\BF_q(t)_\infty$.

Moreover, we denote the ball with center $x$ and radius $q^N$ by $\mathcal{B}(x,N)$. 

\section{A counterexample} 
In this section we shall give an example which shows that \cite[Corollary 5.2]{BaSi} is not correct. Below the original statement.\\

{\bf Claim 1:} (Corollary 5.2. in \cite{BaSi}) {\it Let $N,Q\in \mathbb{N}$ and $a_g\in \mathbb{C}$, where $g\in \BF_q[t]$. Let $\mathcal{S}$ 
be a set
of monic polynomials in $\BF[t]$. Then
\begin{align*}
\sum\limits_{\substack{f \in \mathcal{S} \\ \deg f \leq Q}} \sum\limits_{\substack{r \bmod f\\ (r,f)=1}}\mathrel \bigg 
|\sum\limits_{\substack{g \in \BF_q[t]\\ \deg g\le N}} a_g 
e\Big(g\cdot \frac{r}{f}\Big)\bigg |^2
\le \left(q^{N+1}+ \sharp\mathcal{S}\cdot q^{Q-1}\right)\cdot 
 \sum\limits_{\substack{g \in \BF_q[t]\\ \deg g\le N}} |a_g|^2.
\end{align*}}

The following statement on square moduli is an immediate consequence of Claim 1.\\

{\bf Claim 2:} {\it Let $N,Q\in \mathbb{N}$ and $a_g\in \mathbb{C}$, where $g\in \BF[t]$. Then
\begin{align*}
\sum\limits_{\substack{f \in \BF_q[t] \\ \deg f \leq Q\\ f\, \mbox{\scriptsize \rm monic}}} \sum\limits_{\substack{r \bmod f^2\\ (r,f)=1}}\mathrel \bigg 
|\sum\limits_{\substack{g \in \BF_q[t]\\ \deg g\le N}} a_g 
e\Big(g\cdot \frac{r}{f^2}\Big)\bigg |^2
\le \left(q^{N+1}+ q^{3Q-1}\right)\cdot 
 \sum\limits_{\substack{g \in \BF_q[t]\\ \deg g\le N}} |a_g|^2.
\end{align*}}

We now prove the following Theorem which provides a counterexample to Claim 2. 

\begin{Theorem} \label{counterex} For every $\varepsilon>0$, there are infinitely many natural numbers $Q$ such that for suitable 
$N\in \mathbb{N}$ and sequences 
$(a_g)_{g\in \mathbb{F}_q[t]}$ of complex numbers, we have
\begin{equation*}
\begin{split}
\sum\limits_{\substack{f \in \BF_q[t] \\ \deg f \leq Q \\ f\, \mbox{\scriptsize \rm monic}}}\sum\limits_{\substack{r \md  f^2\\  (r,f)=1}}
\mathrel\bigg|\sum\limits_{\substack{g \in \BF_q[t]\\ \deg g\le N}} a_g  e \Big(g\cdot\frac{r}{f^2}\Big) \mathrel\bigg|^2 
\ge  \frac{q^{\mu Q}}{2} \left(q^{N+1}+ q^{3Q-1}\right)\cdot 
 \sum\limits_{\substack{g \in \BF_q[t]\\ \deg g\le N}} |a_g|^2,
\end{split}
\end{equation*}
where 
$$
\mu:= \frac{\log_q 2}{\log_q((1+\varepsilon)Q)}.
$$
\end{Theorem}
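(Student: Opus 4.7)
I would adapt the construction of Baier--Lynn--Zhao \cite{BaLyZh} from the classical large sieve with square moduli to the function-field setting. Given $\varepsilon>0$, I first fix $D=\lceil \log_q((1+\varepsilon)Q)\rceil$, which is precisely the choice making $q^{\mu Q}=2^{Q/D}$. The prime polynomial theorem guarantees, for $Q$ sufficiently large (which supplies the infinitely many admissible $Q$), at least $(1+\varepsilon)Q/D$ distinct monic irreducibles of degree $D$; from these I select a subset $\mathcal{P}$ of cardinality $k\geq Q/D$ and form the squarefree product $P=\prod_{p\in\mathcal{P}}p$ of degree $kD$.

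The candidate sequence is $a_g=1$ if $P^2\mid g$ and $\deg g\leq N$, and $a_g=0$ otherwise, for a suitable $N\geq 2\deg P$, so that $\sum_g |a_g|^2=q^{N-2\deg P+1}$. The decisive observation is that for every squarefree divisor $f$ of $P$ one has $f^2\mid P^2\mid g$ on the support of $a_g$, so $gr/f^2\in\BF_q[t]$ and hence $e(gr/f^2)=1$ for every $r$ coprime to $f$. Consequently the inner exponential sum equals $\sum_g a_g$ identically, and summing its squared modulus over $r\md f^2$ with $(r,f)=1$ produces exactly $\phi(f^2)\cdot q^{2(N-2\deg P+1)}$. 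The family $\mathcal{F}=\{f\mid P:\deg f\leq Q\}$ of squarefree divisors of $P$ has cardinality at least $2^{Q/D}=q^{\mu Q}$ by standard binomial estimates, so the combinatorial gain $q^{\mu Q}$ is built into $\sum_{f\in\mathcal{F}}\phi(f^2)$ via the multiplicative identity $\sum_{f\mid P}\phi(f^2)=\prod_{p\in\mathcal{P}}(1+\phi(p^2))$.

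Putting these ingredients together, the left-hand side of the claimed inequality is bounded below by
$q^{2(N-2\deg P+1)}\cdot \prod_{p\in\mathcal{P}}(1+\phi(p^2))$,
which for $\deg p=D$ is roughly $q^{2(N-2\deg P+1)+2\deg P}$ with an additional combinatorial factor inherited from $|\mathcal{F}|$. A careful choice of $k$ and $N$ (balancing the two terms $q^{N+1}$ and $q^{3Q-1}$ on the right) then yields the required inequality, up to the factor $q^{\mu Q}/2$, for all $Q$ of a suitable infinite sequence.

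Main obstacle: the principal technical difficulty is the parameter balancing in the final comparison. To realize the full gain $q^{\mu Q}$ coming from $|\mathcal{F}|$, one must choose $\deg P$ and $N$ so that neither $q^{N+1}$ nor $q^{3Q-1}$ on the right outruns the lower bound produced by $\sum_{f\in\mathcal{F}}\phi(f^2)\cdot (\sum_g a_g)^2$. Should the straightforward indicator choice above fall short for some ranges of $\varepsilon$, a refinement---for example superimposing an interval condition on top of the divisibility constraint, or using a CRT-adapted multiplicative weight---will be required to amplify the per-$f$ contribution without diluting the combinatorial factor; verifying that such a refinement satisfies the desired inequality for every $\varepsilon>0$ is the heart of the argument.
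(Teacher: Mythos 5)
Your construction does not produce the claimed $q^{\mu Q}$ gain, and the obstacle you flag at the end is not a parameter-balancing issue but a fatal one. Let me make this quantitative. With $a_g=\BOne_{P^2\mid g,\ \deg g\le N}$ you get $\sum_g a_g = q^{N-2\deg P+1}$ and $\sum_g|a_g|^2=q^{N-2\deg P+1}$, and the left side of the inequality becomes exactly
$\bigl(\sum_{f\in\mathcal{F}}\phi(f^2)\bigr)\cdot q^{2(N-2\deg P+1)}$.
But $\sum_{f\mid P}\phi(f^2)=\prod_{p\mid P}\bigl(1+\phi(p^2)\bigr)$ is \emph{not} amplified by $\#\mathcal{F}=2^{k}$; it is dominated by the single largest term $\phi(P^2)$. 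Indeed, since $\phi(p^2)=q^{2D}-q^D\ge q^{2D}/2$, one has
$q^{2\deg P}/2^{k}\le\sum_{f\mid P}\phi(f^2)\le 2\,q^{2\deg P}$,
so the whole family of $2^{k}$ divisors contributes essentially the same as $f=P$ alone. Dividing both sides of the target inequality by $\sum_g|a_g|^2=q^{N-2\deg P+1}$, you need
$\sum_{f\in\mathcal{F}}\phi(f^2)\cdot q^{N-2\deg P+1}\ge \tfrac{1}{2}q^{\mu Q}\bigl(q^{N+1}+q^{3Q-1}\bigr)$,
and the left side is $\asymp q^{N+1}$ while the right side is $\ge\tfrac{1}{2}q^{\mu Q}q^{N+1}$; so you are short by a factor $q^{\mu Q}$, and the shortfall grows without bound. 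Pushing $\deg P$ above $Q$ does not help either: if $j=\lfloor Q/D\rfloor$, the surviving divisors contribute $\asymp\binom{k}{j}q^{2jD}$, while the requirement $N\ge2\deg P=2kD$ forces you to pay $q^{-4(k-j)D}$ on the squared inner sum, and $\binom{k}{j}\le 2^{k}$ can never compensate $q^{2(k-j)D}$ for $q\ge 3$, $D\ge1$. In short, a divisor-based test vector only recovers the trivial large-sieve lower bound; it cannot beat it.

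The paper's mechanism is different in kind. It does not spread the contribution over many denominators of varying degree; it exhibits an anomalous clustering of Farey fractions $r/f^2$ of \emph{fixed} denominator degree $Q$ around a single point. Taking $G_Q$ to be the product of all monic irreducibles of a fixed degree $d$ (so $Q=\deg G_Q=d\,a_d$), the congruence $f^2\equiv 1\bmod G_Q$ has $2^{a_d}$ monic solutions $f$ of degree $Q$ by CRT, and each yields a fraction $r/f^2$ within $q^{-3Q}$ of $1/G_Q$. Choosing $a_g=e(-g/G_Q)$ makes the inner sum \emph{equal} to $q^{N+1}$ for each of these $2^{a_d}$ pairs $(r,f)$ (for $N=3Q-2$), so the left side is at least $2^{a_d}q^{2(N+1)}$. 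The gain is $2^{a_d}\ge q^{\mu Q}$ by the prime polynomial theorem. This is a large-sieve ``defect'' coming from the sparsity of square moduli: many reduced fractions land in a ball much smaller than the reciprocal spacing suggests. Your construction never creates such a cluster, and no choice of $k$ and $N$ can make it do so. The hint in your last sentence about a ``CRT-adapted multiplicative weight'' is the right instinct, but it needs to be aimed at the \emph{denominators} (finding many $f$ with $f^2\equiv1\bmod G_Q$) rather than at the coefficient sequence $(a_g)$.
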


{\bf Remark:} In \cite[Corollary 6.5]{BaSi}, a large sieve inequality for $k$-th power moduli was stated which is weaker than Claim 2. 
This was not derived directly from Claim 1 but from a more general $n$-dimensional large sieve inequality. Therein, for the case $k=2$ of 
square moduli, the term $q^{N+1}+ q^{3Q-1}$ in Claim 2 is replaced by $(q+1)^N+(q+1)^{3Q}$. Theorem \ref{counterex} does {\it not} provide
a counterexample to this weaker large sieve inequality for square moduli. Despite the flaw in \cite{BaSi} described in section 1, this weaker
bound may actually be correct (at least, we are not able to come up with a counterexample to it). \\

In the following, let $a_d$ be the number of monic irreducible polynomials  of degree $d$ and let $G$ be their product. Then 
$Q: = \deg G = da_d$. We will denote $G$ by $G_Q$. 
First, we establish the following lower bound for the number of Farey fractions with square denominators near certain elements  of $\BF_q(t)$.

\begin{Lemma}\label{FF} Let $\varepsilon \geq 0$ and let $G_Q$ be as above. Define 
\begin{align}\label{FF2}
\CS(Q): = & \mathrel \Big\{ ( r,f)\in \BF_q[t]^2 : \deg f = Q, \, f\ \mbox{\rm monic}, \, \deg r < 2Q, \\ 
& \mbox{\rm gcd}(r,f)=1, \, \Big|\frac{r}{f^2}- \frac{1}{G_Q}\Big|_\infty\leq \frac{1}{q^{3Q}}\mathrel \Big\}.
\end{align}
Then 
\begin{align}\label{FF1}
\# \CS(Q)\geq  q^{ Q(\log_q2)/\log_q ((1+\varepsilon)Q)}.
\end{align}
\end{Lemma}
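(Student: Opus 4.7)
The plan is to translate the Diophantine inequality $|r/f^2-1/G_Q|_\infty\le q^{-3Q}$ into a polynomial congruence. Since $r/f^2-1/G_Q=(rG_Q-f^2)/(f^2G_Q)$ and $|f^2G_Q|_\infty=q^{3Q}$, the condition is equivalent to $|rG_Q-f^2|_\infty\le 1$, i.e., $rG_Q-f^2=c$ for some constant $c\in\BF_q$. Equivalently, $f^2\equiv c\pmod{G_Q}$ and $r=(f^2-c)/G_Q$. I would restrict to $c\in\BF_q^*$: any common monic factor $h$ of $r$ and $f$ then divides $f^2-rG_Q=c$, forcing $h=1$, so $(r,f)=1$ comes for free; moreover $\deg r = 2Q-Q = Q < 2Q$ because $f^2$ is monic of degree $2Q$ and $G_Q$ is monic of degree $Q$. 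Thus it suffices to lower-bound the number of monic $f$ of degree $Q$ satisfying $f^2\equiv 1\pmod{G_Q}$.

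Next I would apply the Chinese Remainder Theorem. Writing $G_Q=\prod_P P$ over the $a_d$ monic irreducibles of degree $d$, each factor $\BF_q[t]/(P)\cong\BF_{q^d}$ contains exactly two square roots of $1$, namely $\pm 1$ (distinct since $q$ is odd). Hence $f^2\equiv 1\pmod{G_Q}$ has exactly $2^{a_d}$ solutions modulo $G_Q$. Because $\deg G_Q=Q$, each residue class modulo $G_Q$ contains exactly one monic polynomial of degree $Q$ (any representative of the class has the form $\beta+kG_Q$ with $\deg\beta<Q$, and degree $Q$ with monic leading coefficient forces $k=1$). Therefore $\#\CS(Q)\ge 2^{a_d}$.

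Finally, I would verify the numerical bound. Using $a_d=Q/d$ we have $2^{a_d}=q^{(Q/d)\log_q 2}$, so the claimed inequality reduces to $d\le\log_q((1+\varepsilon)Q)$, equivalently $q^d\le (1+\varepsilon)Q$. The prime polynomial theorem gives $d a_d=\sum_{k\mid d}\mu(k)q^{d/k}=q^d+O(q^{d/2})$, hence $Q/q^d\to 1$ as $d\to\infty$. Consequently $q^d\le(1+\varepsilon)Q$ holds for all sufficiently large $d$, producing infinitely many admissible $Q=d a_d$ and completing the proof.

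The main conceptual step is the reduction in the first paragraph: in the function field setting, the analytic smallness condition on $\|r/f^2-1/G_Q\|$ is \emph{equivalent} (not merely implied) to a polynomial congruence $f^2\equiv c\pmod{G_Q}$ with a scalar $c$, and this rigidity is what drives the counterexample. Everything after that is routine: CRT and the standard asymptotics for $a_d$.
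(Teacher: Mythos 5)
Your proof is correct and follows essentially the same approach as the paper: count solutions to $f^2\equiv 1\pmod{G_Q}$ via the Chinese Remainder Theorem, observe that each monic degree-$Q$ solution produces an element of $\CS(Q)$, and apply the prime polynomial theorem to estimate $a_d$. You fill in a few details the paper leaves implicit---the verification of $(r,f)=1$, of $\deg r<2Q$, and of the unique monic degree-$Q$ representative in each residue class modulo $G_Q$---but the strategy is identical (apart from a harmless sign slip, $r=(f^2+c)/G_Q$ rather than $(f^2-c)/G_Q$, which does not affect the count).
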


In Lemma \ref{FF} above, we write gcd$(r,f)$ instead of the short notation $(r,f)$ because $(r,f)$ is also used to denote a pair in $\BF_q[t]^2$. Throughout the  sequel, if no confusions are possible, we will use the notation $(r,f)$ to denote the greatest common divisor of $r$ and $f$ (which is unique up to units).  

We note that the expected number of Farey fractions of the form $r/f^2$ with $\deg f = Q, \, f\ \mbox{\rm monic}, 
\deg r \leq  Q^2, \, (r,f)=1$ in an interval of length $\Delta$ is, heuristically, of order of magnitude $q^{3Q}\Delta$. 
So the above Lemma \ref{FF} shows that under certain circumstances, the true number can exceed the expectation signi\-ficantly.\\

{\it Proof of Lemma \ref{FF}.} Using the Chinese Remainder Theorem, the number of solution to the congruence 
\[
f^2 \equiv 1 \bmod {G_Q}
\]
with $\deg f =Q$ is $2^{a_d}$. If $f$ solves the above congruence, then 
\[
f^2 = 1 + r G_Q
\]
for some $r$ with $\deg r \leq 2Q$ and $(r,f)=1$, and it follows that 
\[
\mathrel \Big|\frac{r}{f^2}- \frac{1}{G_Q}\mathrel \Big|_\infty\leq \frac{1}{q^{3Q}}.
\]
Hence 
\begin{equation} \label{Sest}
\# \CS(Q)\geq 2^{a_d}. 
\end{equation}
Moreover, using the prime number theorem for polynomials, for any given $\varepsilon >0$,
\[
a_d\geq \frac{q^d}{(1+\varepsilon )d}
\]
if $d$ large enough. 
Therefore,
\[
Q = da_d \geq \frac{q^d}{(1+\varepsilon )},
\]
which gives us  $d \le \log_q((1+\varepsilon)Q)$ and hence
\begin{equation} \label{ad}
a_d=\frac{Q}{d}\ge \frac{Q}{\log_q((1+\varepsilon)Q)}.
\end{equation}
Now the desired inequality \eqref{FF1} follows from \eqref{Sest} and \eqref{ad}.\hfill $\Box$\\ 

Having proved Lemma \ref{FF}, we are ready to prove Theorem \ref{counterex}. Take $G_Q$ as in Lemma \ref{FF}. Further, set 
\[
N : =  3 Q - 2,\, \, \,  a_g : = e\Big(-\frac{g}{G_Q}\Big).
\]
Then 
\[
\sum\limits_{\substack{g \in \BF_q[t]\\ \deg g\le N}}a_g e \Big(g\cdot\frac{r}{f^2}\Big) = \sum\limits_{\substack{g \in \BF_q[t]\\ \deg g\le N}} e(\alpha_g)
\]
with
\[
\alpha_g : = g\Big(\frac{r}{f^2}- \frac{1}{G_Q}\Big).
\]
If 
\[
\mathrel \Big|\frac{r}{f^2}- \frac{1}{G_Q}\mathrel \Big|_\infty\leq \frac{1}{q^{3Q}},
\]
then $|\alpha_g|_\infty\leq q^{-2}$ if $g\le N$ and hence 
\begin{align}
\mathrel\Big|\sum\limits_{\substack{g \in \BF_q[t]\\ \deg g\le N}} e(\alpha_g) \mathrel\Big| = \# \{g \in \BF_q[t]: \deg g\le N \}
\end{align}
by definition of $e(x)$. Now for $\CS(Q)$ as defined in \eqref{FF2}, it follows that 
\begin{align*}
& \sum\limits_{\substack{f \in \BF_q[t] \\ \deg f \leq Q\\ f\, \mbox{\scriptsize \rm monic}}}\sum\limits_{\substack{r \md  f^2\\  (r,f)=1}}
\mathrel\bigg|\sum\limits_{\substack{g \in \BF_q[t]\\ \deg g\le N}} a_g  e \Big(g\cdot\frac{r}{f^2}\Big) \mathrel\bigg|^2 \\ 
& \geq \sum\limits_{ (r,f)\in \CS(Q)}\mathrel\bigg|\sum\limits_{\substack{g \in \BF_q[t]\\ \deg g\le N}}a_g e \Big(g\cdot\frac{r}{f^2}\Big) \mathrel\bigg|^2\\
& = \#\CS(Q)\cdot  {\# \{g \in \BF_q[t] : \deg g \le N \}}^{2}\\
& = \#\CS(Q)\cdot \# \{g \in \BF_q[t] : \deg g\le N\} \sum\limits_{\substack{g \in \BF_q[t]\\ \deg g\le N}}|a_g|^2\\
& = \#\CS(Q)\cdot q^{N+1} \sum\limits_{\substack{g \in \BF_q[t]\\ \deg g\le N}}|a_g|^2\\
& =\frac{1}{2}\cdot \#\CS(Q)\left(q^{N+1}+q^{3Q-1}\right)\sum\limits_{\substack{g \in \BF_q[t]\\ \deg g\le N}}|a_g|^2.\\
\end{align*}

\section{Main result} 
In the rest of this paper, we establish our main result, the following corrected version of \cite[Corollary 6.5.]{BaSi} 
for the case $k=2$ and char$(\mathbb{F}_q)>2$. 

\begin{Theorem} \label{themainresult}
Let $N,Q\in \mathbb{N}$ and $a_g\in \mathbb{C}$, where $g\in \BF[t]$. Assume that $2Q\le N\le 4Q$. Then
\begin{align} \label{mainineq}
& \sum\limits_{\substack{f \in \BF_q[t] \\ \deg f \leq Q}} \sum\limits_{\substack{r \bmod f^2\\ (r,f)=1}}\mathrel \bigg 
|\sum\limits_{\substack{g \in \BF_q[t]\\ \deg g\le N}} a_g 
e\Big(g\cdot \frac{r}{f^2}\Big)\bigg |^2\\
\ll_{q} & 2^{Q+N}\left(q^{3Q}+ \min\left\{q^{2Q+N/2},q^{Q/2+N}\right\}\right)\cdot 
 \sum\limits_{\substack{g \in \BF_q[t]\\ \deg g\le N}} |a_g|^2.
\end{align}
\end{Theorem}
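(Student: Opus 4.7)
\textit{Proof plan.} My plan is to adapt the method of Baier--Zhao \cite{BaZh} for classical square moduli to the function field setting. First I would invoke the duality principle for the large sieve to reduce \eqref{mainineq} to its equivalent dual form: bounding
\begin{equation*}
T := \sum_{\substack{g\in\BF_q[t]\\ \deg g\le N}} \Bigg|\sum_{(r,f)\in\CF} c_{r,f}\, e\!\left(\frac{gr}{f^2}\right)\Bigg|^2
\end{equation*}
by the same right-hand-side expression times $\sum_{(r,f)}|c_{r,f}|^2$, where $\CF$ denotes the indexing set of admissible pairs ($f$ monic of degree at most $Q$, $r$ a residue modulo $f^2$ with $(r,f)=1$). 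Expanding the square in $T$ and applying the orthogonality identity
\begin{equation*}
\sum_{\deg g\le N} e(g\alpha)
= \begin{cases} q^{N+1} & \text{if } \|\alpha\|\le q^{-N-2},\\ 0 & \text{otherwise},\end{cases}
\end{equation*}
converts $T$ into $q^{N+1}$ times a sum over \emph{close pairs} $(r_1,f_1),(r_2,f_2)$ with $\|r_1/f_1^2-r_2/f_2^2\|\le q^{-N-2}$.

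Setting $h := r_1 f_2^2 - r_2 f_1^2$, proximity is equivalent to $\deg h\le 2(\deg f_1+\deg f_2)-N-2$, and the identity forces $d^2\mid h$ where $d=(f_1,f_2)$. The diagonal $h=0$ collapses to $(r_1,f_1)=(r_2,f_2)$ and contributes $q^{N+1}\|c\|^2$, which fits within $\ll_q 2^{Q+N}q^{3Q}\|c\|^2$ throughout the range $2Q\le N\le 4Q$. For the off-diagonal, for each fixed triple $(f_1,f_2,h)$, coprimality of $(f_1/d)^2$ and $(f_2/d)^2$ determines $r_1$ modulo $f_1^2/d^2$ (and analogously $r_2$), so at most $d^2$ pairs $(r_1,r_2)$ can arise from any such triple.

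I would then bound the off-diagonal contribution via Cauchy--Schwarz in two complementary ways, producing the two alternatives inside the $\min$. The first arrangement orders the sum by $(f_1,f_2,h)$ and absorbs the representation multiplicity via the divisor bound $\tau(h)\le 2^{\deg h}$, yielding the $q^{2Q+N/2}$ alternative. The second arrangement fixes $(r_1,f_1)$ first and counts admissible $(f_2,r_2,h)$ via $\tau(f)\le 2^{\deg f}$, yielding $q^{Q/2+N}$. The overall factor $2^{Q+N}$ emerges from combining these divisor estimates with the constraints $\deg f\le Q$ and $\deg h\le 2Q-2$.

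\textit{Main obstacle.} The delicate step is this final counting: one must arrange the two Cauchy--Schwarz applications so that together they deliver the full $\min$-form rather than only one of its alternatives, and track the divisor losses carefully so they combine to just $2^{Q+N}$ rather than something worse. The hypothesis $2Q\le N\le 4Q$ is essential: the inequality $N\le 4Q$ ensures $\deg h$ can be nonnegative (so there is anything to count off-diagonally), while $N\ge 2Q$ caps $\deg h\le 2Q-2$, keeping the divisor loss affordable. Outside this window the bound takes a different shape and a different method would be required.
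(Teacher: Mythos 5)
Your proposed strategy diverges substantially from the paper's, and the divergence introduces a fatal gap. The paper does not go through duality at all: it uses Lemma~\ref{LS2} (the "$K'(\Delta)$" form of the large sieve), reduces to a single-point counting function $P(x)$ via Dirichlet approximation and the lattice of Farey fractions $u/v + 1/(f_kv^2)$, and then bounds $P(x)$ by three separate estimates (Propositions~\ref{FirstPx}, \ref{secondPx}, \ref{thirdPx}). Two of these (Propositions~\ref{FirstPx} and \ref{thirdPx}) rest on the entire machinery of Sections~6--7 and~11--15: the multiplicativity and modulus $|\beta|_\infty^{1/2}$ of quadratic Gauss sums, the explicit evaluation of the quadratic exponential integrals $E(A,B)$, Poisson summation in both $g$ and $r$, a Weyl shift, and a reciprocity step for Kloosterman fractions. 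Your sketch of "Cauchy--Schwarz in two complementary ways, absorbing multiplicities via $\tau(h)\le 2^{\deg h}$ and $\tau(f)\le 2^{\deg f}$" contains none of this, and it cannot: divisor-bound counting of close Farey pairs $r_1/f_1^2$, $r_2/f_2^2$ only reproduces the trivial large-sieve bound $\ll_q 2^{Q}(q^{4Q}+q^N)$. To see this concretely, fix $(r_1,f_1)$ with $\deg f_1 = Q$ and count admissible $(r_2,f_2)$: since $\deg h \le 4Q-N-2 < 2Q$, each $f_2$ determines at most one $h$ by reducing $r_1 f_2^2 \pmod{f_1^2}$, so you are counting $f_2$ for which a quadratic residue mod $f_1^2$ falls in a short range; the crude bound is $2^{\omega(f_1)} q^{4Q-N-1}$, i.e.\ $P(x)\ll 2^Q q^{4Q-N}$, which is $q^{Q/2}$ too large for the $\min\{q^{2Q+N/2},q^{Q/2+N}\}$ term when $N\approx 3Q$. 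The square-root savings encoded in that $\min$ are exactly what the Gauss sum modulus $|\beta|_\infty^{1/2}$ (Lemma~\ref{Gsum5}), the exponential-integral evaluations, and the final Cauchy--Schwarz over $\ell^\ast$ (Section~15) provide, and they have no purely combinatorial substitute here.

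Two smaller inaccuracies are worth flagging. First, the diagonal contributes $q^{N+1}\|c\|^2$, and your claim that this "fits within $\ll_q 2^{Q+N}q^{3Q}\|c\|^2$" throughout $2Q\le N\le 4Q$ is false for $q>32$ and $N$ near $4Q$ (the ratio $q^{N+1-3Q}/2^{Q+N}$ grows with $Q$); what saves the diagonal is that the $\min$ term is $\ge q^N$ whenever $N\le 4Q$, so it must be compared against the $\min$, not against $q^{3Q}$. Second, $\deg h\ge 0$ requires $N\le 4Q-2$, not $N\le 4Q$, so for $N\in\{4Q-1,4Q\}$ the off-diagonal is empty and only the diagonal remains; this is harmless but your stated rationale for the hypothesis is off by a constant. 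In summary, the duality framing is a legitimate alternative entry point and your identity $h=r_1f_2^2-r_2f_1^2$ with $d^2\mid h$ is correct, but the heart of the theorem -- the passage from the trivial $q^{4Q}$ to the $\min\{q^{2Q+N/2},q^{Q/2+N}\}$ savings -- is missing entirely, and cannot be recovered by divisor bounds alone.
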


This corresponds to the result for the classical case in \cite{BaZh} which asserts that  
\begin{equation} \label{squarenorm}
\begin{split}
& \sum\limits_{q\le Q} \sum\limits_{\substack{a \bmod{q^2}\\ (a,q)=1}} \left|\sum\limits_{n \le N}  
a_n \cdot e\left(\frac{na}{q^2}\right)\right|^2 \\
\ll & (QN)^{\varepsilon}\left(Q^3+\min\{Q^2\sqrt{N},\sqrt{Q}N\}\right)\sum\limits_{n\le N} |a_n|^2.
\end{split}
\end{equation}

{\bf Remark:} We note that the term $2^{Q+N}$ on the right-hand side of \eqref{mainineq} satisfies 
$$
2^{Q+N}\le q^{\varepsilon(Q+N)}
$$
as $q$ is large enough. In this sense, this term corresponds to the term $(QN)^{\varepsilon}$ on the right-hand side of \eqref{squarenorm}.
We further note that the $N$-range $2Q\le N\le 4Q$ in Theorem \ref{themainresult} corresponds to the most relevant
range $Q^2\le N\le Q^4$ in the classical case, and one can show by simple arguments 
that the claimed inequality \eqref{mainineq} remains true if $N$ lies outside this range (see \cite[inequality (1.3)]{BaZh}).\\

{\bf Acknowledgements:} The authors would like to thank the Ramakrishna Mission Vivekananda Educational and Research Institute for an excellent working environment and the anonymous referee for his careful checking of our paper. 

\section{Preliminaries}
In this section, we state some basic results needed for the rest of this paper. 
The following is the one-dimensional version of a general large sieve bound which can be 
found in \cite[section 4]{BaSi}. 

\begin{Lemma}\label{LS1}
Let $X_1, X_2,...,X_R \in \BF_q(t)_\infty$. Suppose that $0<\Delta\leq 1/q$ and $R\in \BN$. Set
\[
K(\Delta): = \max_{x\in \BF_q(t)_\infty }\sum\limits_{\substack{r=1\\ || X_r -x||\leq \Delta}}^{R}1.
\]
Then 
\[
\sum\limits_{r=1}^{R}\mathrel\big| S(X_r)\mathrel\big|^2 \ll K(\Delta)(q^{N+1} + \Delta^{-1})Z,
\]
with an absolute $\ll$-constant.
\end{Lemma}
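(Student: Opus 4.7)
The plan is to apply the standard large-sieve duality $\|T\|_{\mathrm{op}}=\|T^*\|_{\mathrm{op}}$, which reduces the claim to the dual inequality
\[
\sum_{\substack{g\in\BF_q[t]\\ \deg g\le N}}\mathrel \bigg|\sum_{r=1}^R b_r\,e(gX_r)\mathrel \bigg|^2 \;\ll\; K(\Delta)\bigl(q^{N+1}+\Delta^{-1}\bigr)\sum_{r=1}^R|b_r|^2
\]
for arbitrary $b_1,\ldots,b_R\in\BC$. Since $\|\cdot\|$ takes values only in $\{q^{-k}\}_{k\ge 1}\cup\{0\}$, I may assume $\Delta=q^{-k}$, and expanding the square yields
\[
\sum_g\mathrel \bigg|\sum_r b_r\,e(gX_r)\mathrel \bigg|^2 = \sum_{r,s} b_r\overline{b_s}\,T(X_r-X_s), \qquad T(Y):=\sum_{\deg g\le N} e(gY).
\]

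The crucial computation is that $T(Y)\in\{0,q^{N+1}\}$, with $T(Y)=q^{N+1}$ precisely when $\|Y\|\le q^{-N-2}$. Indeed, writing $g=\sum_{j=0}^N g_j t^j$ and letting $Y=\sum_{i\le -1} y_i t^i$ be the reduced representative, the coefficient of $t^{-1}$ in $gY$ equals the $\BF_q$-bilinear form $\sum_{j=0}^N g_j\,y_{-1-j}$. Factoring the sum over $g\in V:=\{g:\deg g\le N\}$ coordinate-wise and using the orthogonality $\sum_{c\in\BF_q} E(cy)=q\cdot\BOne_{y=0}$ shows that $T(Y)=\prod_{j=0}^N q\,\BOne_{y_{-1-j}=0}$, which equals $q^{N+1}$ iff $y_{-1}=\cdots=y_{-(N+1)}=0$, i.e.\ iff $\|Y\|\le q^{-N-2}$.

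Substituting, the dual sum simplifies to $q^{N+1}\sum_{\|X_r-X_s\|\le q^{-N-2}} b_r\overline{b_s}$, and the AM--GM bound $|b_r\overline{b_s}|\le\tfrac12(|b_r|^2+|b_s|^2)$ together with the definition of $K$ gives a majorant of $q^{N+1}K(q^{-N-2})\sum_r|b_r|^2$. It remains to dominate $K(q^{-N-2})$ in terms of $K(\Delta)$. If $\Delta\ge q^{-N-2}$, monotonicity gives $K(q^{-N-2})\le K(\Delta)$, producing the $K(\Delta)q^{N+1}$ term. If $\Delta<q^{-N-2}$, the ultrametric property partitions a ball of radius $q^{-N-2}$ into exactly $q^{-N-2}/\Delta$ disjoint balls of radius $\Delta$, so $K(q^{-N-2})\le K(\Delta)\cdot q^{-N-2}/\Delta$; this yields the $K(\Delta)\Delta^{-1}$ term with absolute constant $q^{-1}\le 1$.

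The main conceptual step I expect to require care is the identification of $T(Y)$: one must unfold the definitions of $e(\cdot)$ and $\|\cdot\|$ carefully to obtain the $\BF_q$-linear conditions on the Laurent coefficients of $Y$ and apply character orthogonality one coordinate at a time, arriving at the precise threshold $\|Y\|\le q^{-N-2}$ rather than the naive $q^{-N-1}$. This off-by-one shift is exactly what makes the implicit constants absolute in both regimes. All other steps---duality, expansion into a bilinear form, and the ultrametric packing---are routine in the non-archimedean setting.
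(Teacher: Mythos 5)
Your proof is correct, and it follows the standard route for the non-archimedean large sieve: the paper does not print a proof of this lemma but simply refers to \cite[Section 4]{BaSi}, where the same argument (duality, exact evaluation of the kernel $T(Y)=\sum_{\deg g\le N}e(gY)$ as $q^{N+1}\BOne_{\|Y\|\le q^{-N-2}}$ via coordinate-wise character orthogonality, then ultrametric ball packing to pass from $K(q^{-N-2})$ to $K(\Delta)$) is used. The one point worth being explicit about is that $K(\Delta)$, as defined via the torus metric $\|\cdot\|$, is translation-invariant and takes its maximum as a function of the ball rather than its center, so the packing step $K(q^{-N-2})\le (q^{-N-2}/\Delta)\,K(\Delta)$ is exactly as you state; with that noted, the argument is complete and the constants are even absolute.
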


This implies the following result. 

\begin{Lemma}\label{LS2}
Let $X_1, X_2,...,X_R \in \BF_q(t)_\infty$ and  $Y_1,Y_2,...,Y_L \in \BF_q(t)_\infty$, where $R,L\in \mathbb{N}$. Suppose that $0<\Delta\leq 1/q$ 
and for every  $x\in \BF_q(t)_\infty$, there exists $Y_l$ with $1\leq l \leq L$ such that 
\[
\| Y_l -x\|\leq \Delta.
\]
Put
\[
K'(\Delta): = \max_{1\leq l \leq L }\sum\limits_{\substack{r=1\\ || X_r -Y_l||\leq \Delta}}^{R}1.
\]
Then 
\[
\sum\limits_{r=1}^{R}\mathrel\big| S(X_r)\mathrel\big|^2 \ll K'(\Delta)(q^{N+1} + \Delta^{-1})Z,
\]
with an absolute $\ll$-constant.
\end{Lemma}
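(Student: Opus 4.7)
I would deduce Lemma \ref{LS2} directly from Lemma \ref{LS1} by showing that the density $K(\Delta)$ appearing in Lemma \ref{LS1} is bounded by $K'(\Delta)$; the conclusion of Lemma \ref{LS2} then follows immediately. The crucial ingredient is that the norm $\|\cdot\|$ on the torus $\BT$ is \emph{ultrametric} (non-archimedean). Indeed, since $|\cdot|_\infty$ is non-archimedean on $\BF_q(t)_\infty$ and $\|f\|=|\{f\}|_\infty$, one verifies by a short calculation (using that a polynomial with $|\cdot|_\infty$-value at most $1/q$ must vanish) that $\{f+g\}=\{f\}+\{g\}$ whenever both fractional parts have absolute value at most $1/q$, which in turn yields
\[
\|f+g\|\le \max(\|f\|,\|g\|)
\]
for all $f,g\in \BF_q(t)_\infty$ with $\|f\|,\|g\|\le 1/q$.

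With this ultrametric inequality in hand, the reduction is a one-line argument. Fix any $x\in \BF_q(t)_\infty$ and invoke the covering hypothesis to choose some $Y_l$ (depending on $x$) with $\|Y_l-x\|\le \Delta$. For every index $r$ satisfying $\|X_r-x\|\le \Delta$, the ultrametric inequality gives
\[
\|X_r-Y_l\|\le \max\bigl(\|X_r-x\|,\|x-Y_l\|\bigr)\le \Delta,
\]
so the set $\{r:\|X_r-x\|\le \Delta\}$ is contained in $\{r:\|X_r-Y_l\|\le \Delta\}$. Hence its cardinality is at most $K'(\Delta)$, and taking the maximum over $x$ yields $K(\Delta)\le K'(\Delta)$. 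Substituting this into the conclusion of Lemma \ref{LS1} completes the proof.

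I do not expect any serious obstacle here: the whole content of Lemma \ref{LS2} is the passage from a supremum over all of $\BF_q(t)_\infty$ to a maximum over the finite net $\{Y_l\}$, and in the archimedean analogue this passage would cost a factor coming from the fact that a ball of radius $\Delta$ is only contained in a ball of radius $2\Delta$ centered at a $\Delta$-net point. In the non-archimedean setting this loss disappears entirely, so the only step requiring any care is confirming that the fractional-part identity $\{f+g\}=\{f\}+\{g\}$ really holds in the relevant range, which is the short verification sketched above.
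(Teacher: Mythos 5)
Your proof is correct and takes precisely the route the paper intends: it presents Lemma \ref{LS2} as an immediate consequence of Lemma \ref{LS1} (``This implies the following result''), and the reduction you give---use the ultrametric inequality $\|f+g\|\le\max(\|f\|,\|g\|)$, which holds because $\{f+g\}=\{f\}+\{g\}$ and $|\cdot|_\infty$ is non-archimedean, to deduce $K(\Delta)\le K'(\Delta)$---is exactly the one-line argument the paper leaves implicit. One small simplification: the restriction ``whenever both fractional parts have absolute value at most $1/q$'' is vacuous, since $|\{f\}|_\infty\le 1/q$ for every $f$ by the definition of the fractional part.
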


Further, we need the Poisson summation formula for function fields (see \cite[Theorem 4.2.1]{CaFr}). 

\begin{Lemma}[Poisson Summation Formula] \label{Poisson} Let $\Phi: \BF_q(t)_\infty \to \BC$ be a function such that 
$$
F(x)=\sum\limits_{f\in \BF_q[t]} |\Phi(f+x)|
$$ 
is uniformly convergent in compact subsets of $\BF_q(t)_\infty$ and 
$$
\sum\limits_{g\in \BF_q[t]} |\hat{\Phi}(g)|
$$ 
is convergent, where $\hat{\Phi}$ is the Fourier transform of $\Phi$, defined as
$$
\hat{\Phi}(x):=\int\limits_{\BF_q(t)_\infty} \Phi(y)e(-xy) \ {\rm d}y.
$$
Then 
\begin{align*}
\sum\limits_{f\in \BF_q[t]}\Phi(f) = \sum\limits_{g\in \BF_q[t]}\hat{\Phi}(g).
\end{align*}  
\end{Lemma}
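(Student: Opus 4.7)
My plan is to follow the strategy of Baier--Zhao \cite{BaZh} and prove the function field analog by combining the general large sieve inequality of Lemma \ref{LS2} with sharp spacing estimates for Farey fractions of the form $r/f^2$, together with Poisson summation (Lemma \ref{Poisson}) to count close pairs.

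First, I would apply Lemma \ref{LS2} with $X_{r,f} := r/f^2$ ranging over the Farey set $\sF(Q) := \{(r, f) : f \text{ monic}, \deg f \le Q, \deg r < 2\deg f, (r,f)=1\}$. The task reduces to bounding $K'(\Delta) = \max_l \#\{(r, f) \in \sF(Q) : \|r/f^2 - Y_l\| \le \Delta\}$ for a sufficiently dense $\Delta$-net $\{Y_l\}$ of the torus $\BT$, and then optimising $\Delta$. The trivial counting bound $K'(\Delta) \ll 1 + q^{3Q}\Delta$ combined with the factor $q^{N+1} + \Delta^{-1}$ from Lemma \ref{LS2} already yields the $q^{3Q}$ piece of the target estimate.

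Second, for the refined piece $\min\{q^{2Q+N/2}, q^{Q/2+N}\}$, I would square out and count pairs: for each $Y_l$, the number of pairs $((r_1, f_1), (r_2, f_2)) \in \sF(Q)^2$ within distance $\Delta$ of $Y_l$ satisfies the small-height Diophantine constraint
$$|r_1 f_2^2 - r_2 f_1^2|_\infty \le 2\Delta \, |f_1 f_2|_\infty^2.$$
Writing $D = \gcd(f_1, f_2)$ and $f_i = D g_i$, this becomes a linear relation in the variables $r_1, r_2$ modulo $g_1^2 g_2^2$. I would then invoke Poisson summation (Lemma \ref{Poisson}) on the shift variable $h := r_1 f_2^2 - r_2 f_1^2$ to detect the constraint and convert the count to a short exponential sum of Ramanujan/Gauss type modulo $f_1^2 f_2^2$. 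The function field divisor bound $d(f) \le 2^{\deg f}$ absorbs the overcounting when extracting the pair $(f_1, f_2)$ from the product, producing the $2^{Q+N}$ factor.

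Third, the two terms inside the minimum arise from two choices of $\Delta$: taking $\Delta \asymp q^{-(N+1)}$ balances the large sieve factor $q^{N+1} + \Delta^{-1}$ and yields the $q^{2Q+N/2}$ branch, while choosing $\Delta$ near the natural Farey spacing $q^{-2Q}$ exploits sparser clustering at fractions with denominators of near-maximal degree and yields the $q^{Q/2+N}$ branch. The hypothesis $2Q \le N \le 4Q$ ensures that both regimes are on the right side of the crossover; outside this range, the elementary arguments mentioned in the remark after the theorem recover the claim.

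The main obstacle will be the Poisson/counting step: one must produce genuine square-root cancellation of size $q^{N/2}$ (respectively $q^{Q/2}$) in the exponential sum emerging when Lemma \ref{Poisson} is applied to the lattice-point count for $h$, analogous to the Weil-type input used in \cite{BaZh}. Handling the $\gcd(f_1,f_2)=D$ decomposition uniformly over $D$, while keeping the divisor bound $d(f)\le 2^{\deg f}$ sharp and respecting the monicity and coprimality conditions in $\sF(Q)$, is the most delicate combinatorial point; additionally, care is needed to ensure that the net $\{Y_l\}$ required by Lemma \ref{LS2} is compatible with the Farey spacing estimates without inflating $K'(\Delta)$.
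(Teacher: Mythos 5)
You have proved the wrong statement. The lemma in question is the Poisson Summation Formula for $\BF_q(t)_\infty$ (Lemma \ref{Poisson}), which asserts that under mild absolute-convergence hypotheses, $\sum_{f\in\BF_q[t]}\Phi(f)=\sum_{g\in\BF_q[t]}\hat{\Phi}(g)$. Your proposal instead sketches a strategy for Theorem \ref{themainresult}, the main large sieve bound for square moduli: you set up Lemma \ref{LS2}, discuss Farey spacing, pair counting, divisor bounds, and the choice of $\Delta$ --- none of which addresses the identity to be established in Lemma \ref{Poisson}. Indeed you invoke Lemma \ref{Poisson} itself as a tool inside your argument, which is circular relative to the actual task.

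For the record, the paper does not prove Lemma \ref{Poisson} at all: it is quoted verbatim from \cite[Theorem 4.2.1]{CaFr}. Had a proof been required, the correct route would be the standard one for a self-dual locally compact abelian group with cocompact discrete subgroup: consider the periodisation $F(x)=\sum_{f\in\BF_q[t]}\Phi(f+x)$, which descends to a function on the compact torus $\BT=\BF_q(t)_\infty/\BF_q[t]$; expand $F$ in its Fourier series over the dual lattice (which is again $\BF_q[t]$, since $\BF_q[t]$ is self-dual with respect to the pairing $e(xy)$); observe that the $g$-th Fourier coefficient of $F$ equals $\hat{\Phi}(g)$ by unfolding the integral over $\BT$ to one over $\BF_q(t)_\infty$; and evaluate both sides at $x=0$, with the stated convergence hypotheses guaranteeing that the Fourier series converges pointwise. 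Your counting/Diophantine machinery is entirely orthogonal to this and cannot yield the identity.
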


We shall work with the weight function
\begin{equation} \label{Phi1def}
\Phi_1(x):=
\begin{cases}
1, &  \text{if} \ |x|_\infty \leq \frac{1}{q} \\
0, & \text{otherwise}.
\end{cases}
\end{equation}
For this function, the following holds by \cite[Lemma 2.2]{Hsu}. 

\begin{Lemma} \label{Phi1} We have $\Phi_1=\hat{\Phi}_1$.
\end{Lemma}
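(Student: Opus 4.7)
The plan is to compute $\hat{\Phi}_1(x) = \int_{|y|_\infty \le 1/q} e(-xy)\,dy$ directly, splitting into two cases according to whether $|x|_\infty \le 1/q$ or $|x|_\infty \ge 1$. The key structural fact to exploit is that the ball $B := \{y\in \BF_q(t)_\infty : |y|_\infty \le 1/q\}$ is a compact additive subgroup of $\BF_q(t)_\infty$, and the Haar measure on $\BF_q(t)_\infty$ is normalized so that $\vol(B)=1$ (this is the same normalization under which Lemma \ref{Poisson} is quoted).

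First, suppose $|x|_\infty \le 1/q$. Then for every $y \in B$, the ultrametric property of $|\cdot|_\infty$ gives $|xy|_\infty \le 1/q^2$. Writing $-xy = \sum_{i \le n} a_i t^i$, this forces $a_{-1}=0$, so by the definition of $e$ one has $e(-xy) = E(0) = 1$ on all of $B$. Hence $\hat{\Phi}_1(x) = \vol(B) = 1 = \Phi_1(x)$.

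Next, suppose $|x|_\infty \ge 1$. Here I would invoke the standard orthogonality principle: a non-trivial additive character of a compact group integrates to zero. So the task reduces to showing that $\chi : y \mapsto e(-xy)$ is non-trivial on $B$. Writing $x = \sum_{i\le n} c_i t^i$ with $c_n \ne 0$ and $n \ge 0$, I would probe with $y = \alpha t^{-n-1}$ for $\alpha \in \BF_q$; such $y$ lies in $B$ since $|y|_\infty = q^{-n-1}\le 1/q$. The coefficient of $t^{-1}$ in $-xy$ is then $-\alpha c_n$, so $\chi(y) = E(-\alpha c_n)$. Since $\Tr : \BF_q \to \BF_p$ is surjective and $c_n \ne 0$, some $\alpha$ yields $\Tr(-\alpha c_n) \not\equiv 0 \pmod p$, so $\chi(y) \ne 1$. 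Consequently $\hat{\Phi}_1(x) = 0 = \Phi_1(x)$, completing the case analysis.

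The only delicate point is the normalization of the Haar measure: the computation in the first case actually yields $\hat{\Phi}_1(x) = \vol(B)$, so the identity $\hat{\Phi}_1 = \Phi_1$ rests on the convention $\vol(B)=1$. This is precisely the standard choice (as in \cite{Hsu}) under which the Poisson summation formula of Lemma \ref{Poisson} holds without additional constants, and once this convention is fixed the two cases above combine to give the lemma.
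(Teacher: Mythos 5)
Your proof is correct. The paper itself does not prove this lemma but simply cites \cite[Lemma 2.2]{Hsu}; your argument is a self-contained version of the standard computation that reference would supply. The two cases you distinguish exhaust all possibilities since $|x|_\infty$ ranges over $\{0\}\cup q^{\mathbb Z}$, your normalization $\vol(\mathcal B(0,-1))=1$ matches the one the paper uses implicitly (see the proof of Lemma \ref{expinte3}, where $\mu(\mathcal B(0,n))=q^{n+1}$), and the nontriviality of $y\mapsto e(-xy)$ on the ball when $|x|_\infty\ge 1$ is verified correctly via surjectivity of $\Tr:\BF_q\to\BF_p$. In short: same content as the cited source, written out explicitly rather than delegated to a reference.
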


Now we quote  Dirichlet's approximation theorem for function fields of dimension $n\in \BN$ from \cite[Theorem 1.1]{GaGh}. 
This is an analogue of Dirichlet's theorem for local fields of positive characteristic (for more details see \cite{BM}).

\begin{Theorem}\emph{(Theorem 1.1.  in \cite{GaGh})}\label{DFF} Let $l$ be a nonnegative integer. For $x:= (x_1,x_2,\cdots,x_n)\in \BF_q(t)_\infty^n$, there exists $v =(v_1,v_2,\cdots,v_n) \in \BF_q[t]^n$ and $u \in \BF_q[t]\backslash \{0\}$ with $(u,v_i)=1$
for every $i,\ 1\leq i\leq n$ such that 
$$
|v_1x_1+v_2x_2+\cdots +v_nx_n-u|_{\infty}< \frac{1}{q^{nl}}\, \text{and} \, \max_{1\leq j\leq n} |v_j|_{\infty}\leq q^l.
$$
\end{Theorem}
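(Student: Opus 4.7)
The approach is to adapt Baier--Zhao's argument \cite{BaZh} to the function-field setting, exploiting the preliminaries in Section 5. The starting point is the multi-pivot large sieve inequality of Lemma \ref{LS2} applied to the points $X_{(r,f)}:=r/f^2$, with $(r,f)$ ranging over the Farey pairs of our sum, together with an approximating system $\{Y_l\}$ consisting of all reduced fractions $b/u$ with $\deg u\le L$, for a parameter $L$ to be chosen. By the one-dimensional Dirichlet theorem (Theorem \ref{DFF} with $n=1$), each $x\in \BF_q(t)_\infty$ lies within $\Delta=q^{-L}$ of some $Y_l$. Lemma \ref{LS2} then reduces \eqref{mainineq} to estimating the multiplicity
\begin{equation*}
K'(q^{-L}) \;=\; \max_{b/u,\,\deg u\le L}\ \#\bigl\{(r,f):\ \deg f\le Q,\ (r,f)=1,\ \|r/f^2-b/u\|\le q^{-L}\bigr\}.
\end{equation*}

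Fix a pivot $b/u$ with $V:=\deg u\le L$. Clearing denominators, the condition $\|r/f^2-b/u\|\le q^{-L}$ is equivalent to $ru-bf^2=m$ for some $m\in \BF_q[t]$ with $\deg m<V+2Q-L$. This quadratic Diophantine relation is the heart of the argument. A first approach is to fix $f$ and observe that the admissible $m$ lie in the single residue class $-bf^2\pmod u$, giving $\ll q^{2Q-L}$ choices and hence $K'_V\ll q^{3Q-L}$; substituting this into Lemma \ref{LS2} and balancing yields the $q^{3Q}$ contribution. The genuinely square-modulus bound comes from fixing $m$ instead and counting solutions of $bf^2\equiv -m\pmod u$ by detecting the congruence via Poisson summation (Lemma \ref{Poisson}) with the self-dual weight $\Phi_1$ (Lemma \ref{Phi1}). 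This recasts the $f$-count as an average of quadratic Gauss sums over $\BF_q[t]/u$, which, by the hypothesis $\mathrm{char}(\BF_q)>2$, enjoy square-root cancellation in the modulus: the naive factor $q^V$ is replaced by $q^{V/2}$. This saving is the source of the improved alternative $\min\{q^{2Q+N/2},q^{Q/2+N}\}$.

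Substituting the refined bound into Lemma \ref{LS2}, summing over the layers $V=0,1,\ldots,L$ (whose aggregate divisor-type contribution together with the counting of square roots modulo $u$ produces the overall $2^{O(Q+N)}$ factor), and optimizing $L$ then yield the claimed inequality. The two minimum terms arise from two different optimal choices of $L$ which are advantageous in complementary subranges of $2Q\le N\le 4Q$: roughly $L\asymp N/2$ for the first and $L\asymp Q/2+N/2$ for the second. The hypothesis $\mathrm{char}(\BF_q)>2$ enters precisely at the Gauss-sum step; it fails there, one could at best recover the naive bound, which is why the theorem is stated only in odd characteristic.

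The main obstacle will be the clean execution of the Gauss-sum/Poisson step and the uniform tracking of the divisor-type losses that collectively yield the factor $2^{Q+N}$. Concretely, the quadratic Gauss sum $\sum_{f}e(hbf^2/u)\Phi_1(\cdot)$ must be evaluated uniformly in $u,b$ and then summed against the outer $m$-sum and the dyadic layers in $V$ without forfeiting the $q^{V/2}$ saving. Once this step is in place, the Dirichlet covering, the dyadic splitting in $V$, and the final optimization in $L$ are routine adaptations of their classical counterparts in \cite{BaZh}.
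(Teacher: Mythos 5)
You have written a proof sketch for the wrong statement. The statement labelled \ref{DFF} is Dirichlet's approximation theorem for function fields, a purely Diophantine result asserting the existence of a good rational approximation $(u,v_1,\dots,v_n)$ to a given point $x\in \BF_q(t)_\infty^n$. In the paper this is a \emph{quoted} external result (Theorem 1.1 of \cite{GaGh}); the authors give no proof of it, and none is needed, since it is a standard pigeonhole/Minkowski-type argument in the ultrametric setting.

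Your proposal, by contrast, is an outline of the proof of the main large sieve bound, Theorem \ref{themainresult}: you invoke Lemma \ref{LS2}, the Farey points $r/f^2$, the counting function $K'(\Delta)$, Poisson summation and quadratic Gauss sums, the characteristic assumption $\mathrm{char}(\BF_q)>2$, and the target inequality \eqref{mainineq}. None of this has any bearing on Theorem \ref{DFF}; indeed, your argument \emph{uses} Theorem \ref{DFF} (via the choice of the pivot points $Y_l$) as a black box. There is therefore a genuine gap of the most basic kind: the statement you were asked to address has not been proved, or even engaged with. A correct proof of Theorem \ref{DFF} would proceed by a pigeonhole argument on the $q^{nl}$ residue boxes obtained by truncating the fractional parts $\{v_1x_1+\cdots+v_nx_n\}$ at precision $q^{-nl}$ as $v$ ranges over the $(q^{l+1})^n$ polynomial vectors with $\max_j|v_j|_\infty\le q^l$, followed by a reduction to enforce the coprimality conditions $(u,v_i)=1$; this has nothing to do with large sieve machinery.
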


We shall only need the one-dimensional case of the above theorem. 

\section{Quadratic Gauss sums} 
Recall that $\cha \BF_q \neq 2$. As in the classical case, we define the quadratic Gauss sums for the rational function field as 
\begin{equation}\label{Gsum}
G(\alpha, l; \beta) = \sum\limits_{d \bmod{\beta}} e\left(\frac{\alpha d^2 +ld}{\beta}\right). 
\end{equation}
These Gauss sums will play an important part in this paper. Along similar lines as in the classical setting, we will evaluate them in this section. 
Our first result is the following multiplicative property.

\begin{Lemma} \label{Gsum2} If $(\beta_1, \beta_2)=1$, then 
\[
G(\alpha, l; \beta_1\beta_2) =G(\alpha \beta_2, l; \beta_1)G(\alpha \beta_1, l; \beta_2).
\]
\end{Lemma}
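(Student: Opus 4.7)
The plan is to mimic the classical proof, which proceeds via the Chinese Remainder Theorem in $\BF_q[t]$ to split the residue system modulo $\beta_1\beta_2$.

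First, since $(\beta_1,\beta_2)=1$, I would verify that every residue class $d \bmod \beta_1\beta_2$ can be written uniquely in the form
$$d \equiv \beta_2 d_1 + \beta_1 d_2 \pmod{\beta_1\beta_2},$$
with $d_1$ running through a complete residue system modulo $\beta_1$ and $d_2$ through one modulo $\beta_2$. Uniqueness follows from the usual CRT argument: if $\beta_2(d_1-d_1') + \beta_1(d_2-d_2') \equiv 0 \bmod \beta_1\beta_2$, then reducing mod $\beta_1$ and mod $\beta_2$ (and using coprimality) forces $d_1 \equiv d_1' \bmod \beta_1$ and $d_2 \equiv d_2' \bmod \beta_2$, and the counts match.

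Next, I would substitute this parametrization into the definition \eqref{Gsum}. Expanding $d^2 = \beta_2^2 d_1^2 + 2\beta_1\beta_2\, d_1 d_2 + \beta_1^2 d_2^2$ and dividing by $\beta_1\beta_2$, the argument of $e$ becomes
$$\frac{\alpha d^2 + ld}{\beta_1\beta_2} = \frac{\alpha \beta_2 d_1^2 + l d_1}{\beta_1} + \frac{\alpha \beta_1 d_2^2 + l d_2}{\beta_2} + 2\alpha\, d_1 d_2.$$
The key observation is that the cross term $2\alpha\, d_1 d_2$ lies in $\BF_q[t]$ (note that the assumption $\cha \BF_q \neq 2$ ensures $2 \neq 0$, though only the polynomiality is needed here). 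Since $e$ is trivial on $\BF_q[t]$ (the coefficient of $t^{-1}$ vanishes for polynomials), this cross term contributes a factor of $1$ to the exponential.

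Finally, using multiplicativity of $e$ for sums, the summand factors as
$$e\!\left(\frac{\alpha\beta_2 d_1^2 + l d_1}{\beta_1}\right) \cdot e\!\left(\frac{\alpha\beta_1 d_2^2 + l d_2}{\beta_2}\right),$$
and the double sum over $(d_1,d_2)$ separates into the product $G(\alpha\beta_2, l; \beta_1) \cdot G(\alpha\beta_1, l; \beta_2)$. There is no real obstacle here; the argument is essentially bookkeeping, and the only point requiring a moment of care is the verification that the CRT parametrization $d = \beta_2 d_1 + \beta_1 d_2$ genuinely gives a bijection of residue systems in $\BF_q[t]$.
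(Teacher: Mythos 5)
Your proof is correct and takes essentially the same route as the paper: parametrize the residues modulo $\beta_1\beta_2$ by $d = d_1\beta_2 + d_2\beta_1$ via the Chinese Remainder Theorem, expand the square, drop the cross term since it is a polynomial (hence in the kernel of $e$), and factor the double sum. The paper just writes the computation in one display without spelling out the CRT bijection or the cross-term observation as explicitly as you do.
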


\begin{proof}
We have
\begin{equation*}
\begin{split}
G(\alpha, l; \beta_1\beta_2) = & \sum\limits_{d \bmod{\beta_1\beta_2}} e\left(\frac{\alpha d^2 +ld}{\beta_1\beta_2}\right) \\
= & \sum\limits_{d_1 \bmod{\beta_1}} \sum\limits_{d_2 \bmod{\beta_2}} e\left(\frac{\alpha (d_1\beta_2+d_2\beta_1)^2+l(d_1\beta_2+d_2\beta_1)}{\beta_1\beta_2}\right)\\
= & \sum\limits_{d_1 \bmod{\beta_1}} e\left(\frac{\alpha \beta_2d_1^2 +ld_1}{\beta_1}\right) 
\sum\limits_{d_2 \bmod{\beta_2}} e\left(\frac{\alpha \beta_1d_2^2 +ld_2}{\beta_2}\right)\\
= & G(\alpha \beta_2, l; \beta_1)G(\alpha \beta_1, l; \beta_2),
\end{split}
\end{equation*}
which completes the proof.
\end{proof}

Further, we relate $G(\alpha, l; \beta)$ to $G(\alpha, 0; \beta)$. 

\begin{Lemma}\label{Gsum1} Assuming that $(\alpha, \beta) = 1$ and $\alpha \ol \alpha =1 \bmod \beta,$  we have 
\[
G(\alpha, l; \beta) = e\left(-\frac{\ol{\alpha} l^2}{4\beta}\right)G(\alpha, 0; \beta).
\]
\end{Lemma}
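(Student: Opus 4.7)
The approach is the classical ``completing the square'' trick for Gauss sums, which transfers verbatim to the function-field setting because $\cha \BF_q \ne 2$ ensures that $2$ (and hence $4$) is a unit in $\BF_q$. Since moreover $(\alpha,\beta)=1$, I can find a polynomial $c \in \BF_q[t]$, unique modulo $\beta$, solving $2\alpha c \equiv l \pmod{\beta}$; explicitly, $c \equiv \ol{\alpha}\, l/2 \pmod{\beta}$.

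The key step is then to use the polynomial identity $\alpha(d+c)^2 = \alpha d^2 + 2\alpha c\, d + \alpha c^2$ together with the congruence $2\alpha c \equiv l \pmod{\beta}$ to rewrite the phase:
$$\alpha d^2 + l d \equiv \alpha(d+c)^2 - \alpha c^2 \pmod{\beta\,\BF_q[t]}.$$
Dividing by $\beta$, the two sides differ by an element of $\BF_q[t]$, which is killed by $e$. Substituting this into the defining sum \eqref{Gsum} and then shifting the summation variable $d \mapsto d-c$ (a bijection on residues modulo $\beta$) pulls out the factor $e(-\alpha c^2/\beta)$ and leaves behind $G(\alpha,0;\beta)$. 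Finally, using $c \equiv \ol{\alpha}\, l/2$ and $\alpha\ol{\alpha}\equiv 1 \pmod{\beta}$ reduces $\alpha c^2$ modulo $\beta$ to $\ol{\alpha}\, l^2/4$, producing the exponential factor $e(-\ol{\alpha}\, l^2/(4\beta))$ as stated.

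The only genuine subtlety is bookkeeping: one must verify that expressions such as $\ol{\alpha}\, l^2/(4\beta)$ are unambiguously defined, which they are because $4^{-1}\in \BF_q^\times$ so that $\ol{\alpha}\, l^2/4 \in \BF_q[t]$, and one must check that all the relevant congruences take place in $\BF_q[t]$ before division by $\beta$, so that $e$ is applied to well-defined elements of the torus $\BT$. Beyond this, there is no real obstacle; the argument parallels the classical proof step for step.
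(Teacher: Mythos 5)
Your proof is correct and uses exactly the same argument as the paper: complete the square with $c \equiv 2^{-1}\ol{\alpha} l \pmod{\beta}$, shift the summation variable, and simplify $\alpha c^2 \equiv \ol{\alpha} l^2/4 \pmod{\beta}$. The paper's proof is merely a condensed version of the same computation.
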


\begin{proof} Using quadratic completion, we obtain
$$
e\left(\frac{\alpha d^2 +ld}{\beta}\right)=
e\left(\frac{\alpha (d +2^{-1}l\overline{\alpha})^2}{\beta}\right)e\left(-\frac{\overline{\alpha}l^2}{4\beta}\right).
$$
Making the change of variable $d +2^{-1}l\overline{\alpha} \rightarrow d$ and summing over $d$ gives the desired equation.
\end{proof}

Next, we reduce the exponent in power moduli as follows.

\begin{Lemma} \label{Gsum3} Assuming $(\alpha, \beta) = 1$ and $r \geq 2$, we have
$$
G(\alpha, 0; \beta^r) = q^{\deg \beta}G(\alpha, 0; \beta^{r-2}).
$$
\end{Lemma}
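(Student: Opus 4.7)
The plan is to carry out the standard ``factor–reduce'' manipulation for Gauss sums, adapted to the function field setting. I would write each residue $d$ modulo $\beta^r$ uniquely as $d = u+v\beta^{r-1}$ with $u$ running over residues modulo $\beta^{r-1}$ and $v$ running over residues modulo $\beta$. Expanding the square gives
$$\frac{\alpha d^2}{\beta^r} = \frac{\alpha u^2}{\beta^r} + \frac{2\alpha uv}{\beta} + \alpha v^2 \beta^{r-2}.$$
Since $r\ge 2$, the last summand lies in $\BF_q[t]$, so its fractional part is zero and it contributes a factor $1$ under $e(\cdot)$. This is the only place where the hypothesis $r\ge 2$ enters.

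Next I would split the exponential as a product and interchange sums, obtaining
$$G(\alpha,0;\beta^r) = \sum_{u\bmod \beta^{r-1}} e\!\left(\frac{\alpha u^2}{\beta^r}\right)\sum_{v\bmod \beta} e\!\left(\frac{2\alpha uv}{\beta}\right).$$
The inner sum is the character sum of the additive character $v\mapsto e(2\alpha u v/\beta)$ on $\BF_q[t]/\beta$, which by orthogonality equals $q^{\deg\beta}$ if $\beta\mid 2\alpha u$ and vanishes otherwise. Because $(\alpha,\beta)=1$ and $\cha\,\BF_q\neq 2$ (so $2$ is a unit in $\BF_q[t]/\beta$), the condition $\beta\mid 2\alpha u$ collapses to $\beta\mid u$.

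Finally I would parametrise the surviving $u$'s as $u=\beta w$ with $w$ running over residues modulo $\beta^{r-2}$. Then $\alpha u^2/\beta^r = \alpha w^2/\beta^{r-2}$, and therefore
$$G(\alpha,0;\beta^r) = q^{\deg\beta}\sum_{w\bmod \beta^{r-2}} e\!\left(\frac{\alpha w^2}{\beta^{r-2}}\right) = q^{\deg\beta}\, G(\alpha,0;\beta^{r-2}),$$
which is exactly the asserted identity.

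There is no real obstacle here; the only subtle point is ensuring that the quadratic cross-term $\alpha v^2 \beta^{2(r-1)}/\beta^r$ lies in $\BF_q[t]$ so that the character $e$ kills it, and that the orthogonality step is legitimate in the ring $\BF_q[t]/\beta$. Both hold because $r\ge 2$ and because $2\alpha$ is a unit modulo $\beta$, the latter using $\cha\,\BF_q\neq 2$ which is a standing assumption in the paper.
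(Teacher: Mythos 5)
Your proof is correct and follows the same factor--reduce decomposition the paper uses, only with the roles of the two running variables swapped ($u,v$ in place of the paper's $b,a$). You also make explicit two points the paper leaves implicit---that $\alpha v^2\beta^{r-2}\in\BF_q[t]$ precisely because $r\ge 2$, and that the divisibility condition $\beta\mid 2\alpha u$ collapses to $\beta\mid u$ since $2\alpha$ is a unit modulo $\beta$---but the argument is the same.
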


\begin{proof} We write
\begin{equation*}
\begin{split}
 G(\alpha, 0; \beta^r) = & \sum\limits_{d \bmod{\beta^r}} e\left(\frac{\alpha d^2}{\beta^r}\right)\\ = & 
 \sum\limits_{a \bmod{\beta}} \sum\limits_{b \bmod{\beta^{r-1}}} e\left(\frac{\alpha (a\beta^{r-1}+b)^2}{\beta^r}\right)\\
 = & \sum\limits_{b \bmod{\beta^{r-1}}} e\left(\frac{\alpha b^2}{\beta^r}\right) \sum\limits_{a \bmod{\beta}} \
 e\left(\frac{2\alpha ba}{\beta}\right). 
\end{split}
\end{equation*}
Now 
\begin{equation*}
\begin{split}
\sum\limits_{a \bmod{\beta}} e\left(\frac{2\alpha ba}{\beta}\right) =  
\begin{cases}
q^{\deg \beta} & \mbox{ if } b \equiv 0 \bmod{\beta}\\ 0 & \mbox{ otherwise.} 
\end{cases}
\end{split}
\end{equation*}
It follows that
\begin{equation*}
\begin{split}
 G(\alpha, 0; \beta^r)
 = & q^{\deg \beta}\sum\limits_{\substack{b \bmod{\beta^{r-1}}\\ b \equiv 0 \bmod{\beta}}}
 e\left(\frac{\alpha b^2}{\beta^r}\right)\\
 = & q^{\deg \beta}\sum\limits_{d \bmod{\beta^{r-2}}} e\left(\frac{\alpha (\beta d)^2}{\beta^{r}}\right)\\ = 
 & q^{\deg \beta}G(\alpha, 0; \beta^{r-2}).
 \end{split}
\end{equation*}
\end{proof}

The next lemma reduces $G(\alpha,0;P)$ to $G(1,0;P)$ in the case when $P$ is an irreducible polynomial. 

\begin{Lemma} \label{Gsum4} If $P\in \BF_q[t]$ is an irreducible polynomial and $(\alpha,P)=1$, then   
\[
G(\alpha,0; P) =\left(\frac{\alpha}{P}\right) G(1, 0; P),
\]
where $\DS\left(\frac{\alpha}{P}\right)$ is the Legendre symbol for the rational function field.
\end{Lemma}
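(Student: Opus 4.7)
The plan is to adapt the classical proof of the corresponding identity over $\mathbb{Z}$. Since $P$ is irreducible and $q$ is odd, the ring $\BF_q[t]/(P)$ is a field of odd characteristic, so each element $c$ has exactly $1+\left(\frac{c}{P}\right)$ square roots modulo $P$ (with the convention that the Legendre symbol vanishes at $0$). Grouping the terms of $G(\alpha,0;P)=\sum_{d\bmod P}e(\alpha d^2/P)$ by the common value $c=d^2\bmod P$ therefore yields
$$
G(\alpha,0;P)=\sum_{c\bmod P} e\!\left(\frac{\alpha c}{P}\right)+\sum_{c\bmod P}\left(\frac{c}{P}\right)e\!\left(\frac{\alpha c}{P}\right).
$$

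The first sum should vanish for $(\alpha,P)=1$, by orthogonality of additive characters on the finite abelian group $\BF_q[t]/(P)$. To justify this I would verify that $c\mapsto e(\alpha c/P)$ is a non-trivial character: solving $\alpha c\equiv t^{\deg P-1}\pmod{P}$ (or $\alpha c\equiv 1\pmod P$ when $\deg P=1$) produces a residue $c$ for which $\{\alpha c/P\}=t^{-1}+O(t^{-2})$, so that $e(\alpha c/P)\neq 1$; summing a non-trivial character over the group then gives $0$.

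For the second sum I would make the bijective substitution $c\mapsto \alpha^{-1}c\pmod P$, with $\alpha^{-1}\in\BF_q[t]$ any representative of the inverse of $\alpha$ modulo $P$. Writing $\alpha\alpha^{-1}=1+Ph$ with $h\in\BF_q[t]$ one has $e(\alpha\cdot\alpha^{-1}c/P)=e(c/P+hc)=e(c/P)$, since $e$ is trivial on $\BF_q[t]$. Using the complete multiplicativity of the Legendre symbol together with $\left(\frac{\alpha^{-1}}{P}\right)=\left(\frac{\alpha}{P}\right)$ (as its values lie in $\{\pm 1\}$), the substitution gives
$$
\sum_{c\bmod P}\left(\frac{c}{P}\right)e\!\left(\frac{\alpha c}{P}\right)=\left(\frac{\alpha}{P}\right)\sum_{c\bmod P}\left(\frac{c}{P}\right)e\!\left(\frac{c}{P}\right)=\left(\frac{\alpha}{P}\right)G(1,0;P),
$$
which is the claimed identity.

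The only step requiring real care is the additive character orthogonality used in dispatching the first sum, but this is a routine verification from the definition of $e$ on $\BF_q(t)_\infty$ and should not pose a serious obstacle. Everything else reduces to a change of variables inside the character sum.
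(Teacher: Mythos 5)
Your proof is essentially the same as the paper's: both decompose $G(\alpha,0;P)=\sum_c\bigl(1+\bigl(\tfrac{c}{P}\bigr)\bigr)e(\alpha c/P)$, discard the untwisted sum by orthogonality, and finish with the substitution $c\mapsto\alpha^{-1}c$ together with multiplicativity of the Legendre symbol. One small correction to your side remark on non-triviality of the additive character: solving $\alpha c\equiv t^{\deg P-1}\pmod P$ gives $e(\alpha c/P)=E(1)$, and $E(1)=1$ whenever $p\mid[\BF_q:\BF_p]$ (e.g.\ $q=p^p$), since $\Tr(1)=[\BF_q:\BF_p]\bmod p$; you should instead take $\alpha c\equiv a\,t^{\deg P-1}\pmod P$ with $a\in\BF_q$ chosen so that $\Tr(a)\neq 0$, which exists because $\Tr:\BF_q\to\BF_p$ is surjective. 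The paper simply invokes this orthogonality as standard, so this does not affect the equivalence of the two arguments.
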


\begin{proof}
We first write
$$
G(\alpha,0;P)=1+\sum\limits_{\substack{e \bmod{P}\\ e\not\equiv 0 \bmod{P}}} \left(1+\left(\frac{e}{P}\right)\right)
e\left(\frac{\alpha e}{P}\right),
$$
which implies
\begin{equation} \label{gaussrew}
G(\alpha,0;P)= \sum\limits_{e \bmod{P}} \left(\frac{e}{P}\right)e\left(\frac{\alpha e}{P}\right)
\end{equation}
using the orthogonality relation
$$
\sum\limits_{e \bmod{P}} e\left(\frac{\alpha e}{P}\right)=0
$$
if $(\alpha,P)=1$. 
Changing variables $f=\alpha e$ now gives
$$
G(\alpha,0;P)=\sum\limits_{e \bmod{P}} \left(\frac{\overline{\alpha} f}{P}\right) e\left(\frac{f}{P}\right) =
\left(\frac{\alpha}{P}\right)\sum\limits_{f \bmod{P}} \left(\frac{f}{P}\right) e\left(\frac{f}{P}\right),
$$
where $\alpha \overline{\alpha}\equiv 1 \bmod{P}$. 
Similarly as above,
$$
\sum\limits_{f \bmod{P}} \left(\frac{f}{P}\right) e\left(\frac{f}{P}\right) = G(1,0;P)
$$
which completes the proof.
\end{proof}

Now we are ready to determine the modulus of a quadratic Gauss sum. 

\begin{Lemma} \label{Gsum5} Assuming $(\alpha,\beta)=1$, we have 
\begin{equation}\label{Gsum6}
|G(\alpha, l; \beta)| = |\beta|^{1/2}_\infty.
\end{equation}
\end{Lemma}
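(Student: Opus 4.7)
The plan is to reduce $G(\alpha,l;\beta)$ step by step to the single quadratic Gauss sum $G(1,0;P)$ attached to an irreducible modulus, and then to evaluate the latter directly via orthogonality. All the preparatory reductions have been supplied by the preceding lemmas.

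\textbf{Step 1 (killing $l$).} By Lemma~\ref{Gsum1}, since $|e(\cdot)|=1$, one has $|G(\alpha,l;\beta)|=|G(\alpha,0;\beta)|$, so it suffices to prove \eqref{Gsum6} in the case $l=0$.

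\textbf{Step 2 (multiplicativity).} Writing $\beta$, up to a unit factor, as $\prod_{i=1}^{k}P_i^{r_i}$ with the $P_i$ distinct monic irreducibles and applying Lemma~\ref{Gsum2} iteratively gives a factorization
$$
G(\alpha,0;\beta)=\prod_{i=1}^{k}G(\alpha_i,0;P_i^{r_i})
$$
for suitable $\alpha_i$ coprime to $P_i$. Since $|\beta|_\infty=\prod_i|P_i^{r_i}|_\infty$, the claim is reduced to showing $|G(\alpha,0;P^r)|=|P^r|_\infty^{1/2}$ for each monic irreducible $P$, each $r\ge 1$, and each $\alpha$ coprime to $P$.

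\textbf{Step 3 (reducing the exponent).} Iterating Lemma~\ref{Gsum3} collapses $r$ in steps of two. For $r=2s$ one gets $G(\alpha,0;P^{2s})=q^{s\deg P}\cdot G(\alpha,0;1)=q^{s\deg P}$, which has modulus $|P^{2s}|_\infty^{1/2}$. For $r=2s+1$ one gets $G(\alpha,0;P^{2s+1})=q^{s\deg P}\cdot G(\alpha,0;P)$, and Lemma~\ref{Gsum4} together with the fact that the Legendre symbol $\bigl(\tfrac{\alpha}{P}\bigr)$ has modulus $1$ yields $|G(\alpha,0;P)|=|G(1,0;P)|$. Everything thus boils down to establishing
$$
|G(1,0;P)|=|P|_\infty^{1/2}
$$
for every monic irreducible $P$.

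\textbf{Step 4 (the irreducible case).} For this I would expand the square and substitute $d_1=d+h$, $d_2=d$:
\begin{align*}
|G(1,0;P)|^2
&=\sum_{d_1,d_2\bmod P}e\!\left(\frac{d_1^2-d_2^2}{P}\right)\\
&=\sum_{h\bmod P}e\!\left(\frac{h^2}{P}\right)\sum_{d\bmod P}e\!\left(\frac{2hd}{P}\right).
\end{align*}
Because $\mathrm{char}\,\BF_q\ne 2$, the coefficient $2h$ is coprime to $P$ unless $h\equiv 0\pmod P$, in which case the inner sum equals $|P|_\infty$; for all other $h$ the inner sum vanishes by the standard orthogonality relation for $e$. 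Hence $|G(1,0;P)|^2=|P|_\infty$, as required.

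The main obstacle is really only this final direct computation in Step~4; Steps~1--3 are just bookkeeping delivered by the preceding lemmas. The odd-characteristic hypothesis is essential precisely to ensure the non-vanishing of $2h$ modulo $P$, which is what makes the inner character sum collapse to a single nontrivial term.
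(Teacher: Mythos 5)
Your proof is correct. Steps~1--3 (killing the linear term via Lemma~\ref{Gsum1}, multiplicativity via Lemma~\ref{Gsum2}, exponent reduction via Lemma~\ref{Gsum3}, and passing to $\alpha=1$ via Lemma~\ref{Gsum4}) are exactly the reduction the paper invokes when it says ``by the virtue of the previous lemmas\dots it suffices to show $|G(1,0;P)|=|P|_\infty^{1/2}$''; you have simply spelled it out more explicitly.

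Step~4 is where you diverge. The paper starts from the Legendre-symbol representation $G(\alpha,0;P)=\sum_{e\bmod P}\bigl(\tfrac{e}{P}\bigr)e(\alpha e/P)$ from \eqref{gaussrew}, squares it, observes the expression vanishes when $\alpha\equiv 0\bmod P$, averages over all $\alpha\bmod P$, and uses $\sum_\alpha e((f_1-f_2)\alpha/P)$ to collapse the double sum to the diagonal, yielding $(q^{\deg P}-1)|G(1,0;P)|^2=q^{\deg P}(q^{\deg P}-1)$. You instead expand $|G(1,0;P)|^2$ directly and perform a Weyl shift $d_1=d+h$, after which orthogonality in $d$ forces $h\equiv 0$; the odd-characteristic hypothesis enters in the same place in both arguments (to ensure $2h\not\equiv 0$ for $h\not\equiv 0$, respectively to justify the quadratic completion in Lemma~\ref{Gsum1}). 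Your route is a bit more elementary and self-contained, avoiding the character-sum identity and the averaging over $\alpha$; in fact your Step~4 computation would also give $|G(\alpha,0;P)|^2=q^{\deg P}$ directly for any $\alpha$ coprime to $P$, so strictly speaking you would not even need Lemma~\ref{Gsum4} in Step~3. The paper's route, by contrast, passes through Gauss-sum/Legendre-symbol machinery that it has already developed in Lemma~\ref{Gsum4}, so it is the more economical choice given the surrounding material. Both are standard and both are correct.
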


\begin{proof}
By the virtue of the previous lemmas on quadratic Gauss sums, it suffices to show that 
\begin{equation} \label{goaly}
|G(1, 0; P)| = |P|^{1/2}_\infty
\end{equation}
for any irreducible polynomial $P$, which we shall establish in the following. Taking the modulus square of both sides of \eqref{gaussrew} 
gives 
\begin{equation} \label{modsquare}
|G(\alpha,0; P)|^2 =  \sum\limits_{f_1, f_2 \bmod P} \left(\frac{f_1}{P}\right)\left(\frac{f_2}{P}\right)
e\left(\frac{ (f_1-f_2)\alpha}{P}\right)
\end{equation}
for any $\alpha$ with $\alpha\not\equiv 0 \bmod{P}$. 
We observe that the right-hand side of \eqref{modsquare} equals 0 if $\alpha \equiv 0 \bmod{P}$ using the orthogonality relation
$$
\sum\limits_{f \bmod P} \left(\frac{f}{P}\right)=0.
$$
Now summing both sides of \eqref{modsquare} over all $\alpha \bmod P$ with 
$\alpha \not\equiv 0 \bmod{P}$ and then using Lemma \ref{Gsum4} and the above observation, we obtain 
\begin{equation}
\begin{split}
\left(q^{\deg P}-1\right) |G(1,0; P)|^2 = & \sum\limits_{\alpha \bmod{P}} \sum\limits_{f_1, f_2 \bmod P} \left(\frac{f_1}{P}\right)\left(\frac{f_2}{P}\right)
 e\left(\frac{ (f_1-f_2)\alpha}{P}\right)\\ 
 = & \sum\limits_{f_1, f_2 \bmod P} \left(\frac{f_1}{P}\right)\left(\frac{f_2}{P}\right)
 \sum\limits_{\alpha \bmod{P}} e\left(\frac{ (f_1-f_2)\alpha}{P}\right)\\
 = & q^{\deg P} \sum\limits_{f\bmod P} \left(\frac{f}{P}\right)^2\\
 = & q^{\deg P} \left(q^{\deg P}-1\right),
\end{split}
\end{equation}
which gives us \eqref{goaly}. 
\end{proof}

\section{Quadratic exponential integrals}\label{QEIntegrals}
Let $\BF_q(t)_{\infty}^2$ be the set of squares of elements of $\BF_q(t)_{\infty}$. Observe that $y\in \BF_q(t)_{\infty}^2\setminus \{0\}$ if and only if the leading coefficient $c_R$ of
$$
y=\sum\limits_{i=-\infty}^R c_it^i\quad (c_R\not=0)
$$
is a square in $\BF_q^{\ast}$ and $R$ is even. Now we fix a square root function $\sqrt{y}$ on $\BF_q(t)_{\infty}^2$ as follows. If $y=0$, then $\sqrt{y}=0$. If $y$ is a square in $\BF_q^{\ast}$, 
then $\sqrt{y}$ is any of the two $s\in \BF_q$ such that $s^2=y$. Now, more generally, if $y\in \BF_q(t)_{\infty}^2\setminus \{0\}$, then $\sqrt{y}$ is those of the two 
$s\in \BF_q(t)_{\infty}^{\ast}$ satisfying $s^2=y$ whose leading coefficients is the square root  
$\sqrt{c_R}$, fixed above, of the leading coefficient $c_R$ of $y$. 

Now let $Q$ be a positive integer and $\mathcal{B}^2(0, 2Q)$ be the set of squares of elements of $\BF_q(t)_{\infty}$ in the ball 
$\mathcal{B}(0, 2Q)$.
In this section, we evaluate exponential integrals of the form 
\begin{equation} \label{expindef}
E(A,B) : = \int\limits_{\mathcal{B}^2(0, 2Q)} \frac{1}{2|\sqrt{y}|_{\infty}} e\left(Ay-  B\sqrt{y}\right) dy,
\end{equation}
which will show up in this paper as well.

A change of variables $y=x^2$ gives
\begin{equation} \label{changeofvariables}
E(A,B) = \int\limits_{\mathcal{B}(0, Q)} e\left(Ax^2-  Bx\right) dx,
\end{equation}
where we note that 
$$
dx^2=2|x|_{\infty}dx.
$$ 
If $A=0$, we immediately deduce the following.

\begin{Lemma} \label{expinte1} For every $B\in \BF_q(t)_{\infty}$, we have 
\begin{equation*} 
E(0,B)= \begin{cases}q^{Q +1} \,\hspace{.2cm} \text{if } \, |B|_\infty \leq q^{-(Q +2)}\\
0 \, \ \hspace{1cm} \text{otherwise}.
\end{cases}
\end{equation*}
\end{Lemma}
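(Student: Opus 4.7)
The plan is to apply the change-of-variable formula \eqref{changeofvariables}, which with $A = 0$ reduces the lemma to evaluating the additive-character integral
\[
E(0,B) = \int_{\mathcal{B}(0,Q)} e(-Bx)\, dx
\]
over the compact subgroup $\mathcal{B}(0,Q) \subset \BF_q(t)_\infty$. This is then a routine orthogonality-of-characters calculation.

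First I would pin down the Haar-measure normalisation. Evaluating the identity $\Phi_1 = \widehat{\Phi}_1$ of Lemma~\ref{Phi1} at $0$ gives
\[
1 = \Phi_1(0) = \widehat{\Phi}_1(0) = \int_{\BF_q(t)_\infty}\Phi_1(y)\, dy = \mathrm{vol}\bigl(\mathcal{B}(0,-1)\bigr),
\]
so, after scaling by a power of $t$, $\mathrm{vol}(\mathcal{B}(0,N)) = q^{N+1}$ for every integer $N$. In particular $\mathrm{vol}(\mathcal{B}(0,Q)) = q^{Q+1}$, which will supply the nonzero case of the statement.

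For the dichotomy, I would split according to the size of $B$. When $|B|_\infty \le q^{-(Q+2)}$, every $x \in \mathcal{B}(0,Q)$ satisfies $|Bx|_\infty \le q^{-2}$, so the coefficient of $t^{-1}$ in $-Bx$ vanishes and the definition of $e$ forces $e(-Bx) = E(0) = 1$; the integrand is identically $1$ and $E(0,B) = q^{Q+1}$. When instead $|B|_\infty \ge q^{-(Q+1)}$, I would write $B = \sum_{i\le M}b_i t^i$ with $b_M \neq 0$ and $M \ge -Q-1$, and take $x_0 = c t^{-M-1}$, which lies in $\mathcal{B}(0,Q)$ for any $c \in \BF_q$. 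The coefficient of $t^{-1}$ in $-Bx_0$ equals $-c b_M$; since $c \mapsto -c b_M$ is a bijection of $\BF_q$ and $\Tr : \BF_q \to \BF_p$ is surjective, some $c \in \BF_q$ yields $\Tr(-c b_M) \neq 0$, hence $e(-Bx_0) \neq 1$. Thus $x \mapsto e(-Bx)$ is a \emph{nontrivial} additive character of $\mathcal{B}(0,Q)$. Partitioning $\mathcal{B}(0,Q)$ into cosets of the open subgroup $\mathcal{B}(0,N_0)$, where $N_0 < Q$ is the largest integer on which this character is still trivial, we get
\[
E(0,B) = \mathrm{vol}\bigl(\mathcal{B}(0,N_0)\bigr)\sum_{x_0 + \mathcal{B}(0,N_0)} e(-Bx_0) = 0
\]
by the standard orthogonality relation applied to the nontrivial induced character on the finite quotient $\mathcal{B}(0,Q)/\mathcal{B}(0,N_0)$.

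The only point requiring care is locating the exact threshold $q^{-(Q+2)}$ versus $q^{-(Q+1)}$ and, at the boundary, invoking the nontriviality of $\Tr$ to confirm that the character really does become nontrivial on the ball as soon as that threshold is crossed; after that, the result is immediate from character orthogonality.
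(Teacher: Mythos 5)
Your proof is correct, and it fills in the details of exactly the argument the paper intends when it says the lemma follows ``immediately'' from the change of variables \eqref{changeofvariables} with $A=0$: reduce to $\int_{\mathcal{B}(0,Q)} e(-Bx)\,dx$, note that the integrand is identically $1$ precisely when $|B|_\infty \le q^{-(Q+2)}$ (giving $\mathrm{vol}(\mathcal{B}(0,Q)) = q^{Q+1}$, the same normalisation $\mu(\mathcal{B}(0,n)) = q^{n+1}$ the paper records in \eqref{particular}), and otherwise invoke orthogonality of the nontrivial character on the compact group. Your observation that $\Phi_1 = \hat{\Phi}_1$ evaluated at $0$ pins down the Haar normalisation is a tidy way to avoid citing an external convention, but it is just a different bookkeeping for the same fact.
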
 

If $A\not=0$, then we proceed as follows. First, using quadratic completion, we obtain
\begin{align*}
E(A,B) = e\left(\frac{-B^2}{4A}\right)\int\limits_{\mathcal{B}(0, Q)} e\left(A \left(x-\frac{B}{2A}\right)^2 \right) dx.
\end{align*}
If $|B/A|_\infty\leq q^{Q}$, then a linear change of variables gives 
\begin{align*}
\int\limits_{\mathcal{B}(0, Q)} e\left(A \left(x-\frac{B}{2A}\right)^2 \right) dx = 
& \int\limits_{\mathcal{B}(0, Q)} e\left(A x^2 \right) dx,
\end{align*}
where we use the fact that $\mathcal{B}(B/(2A),Q)=\mathcal{B}(0,Q)$ in this case. 
When $|B/A|_\infty > q^{Q}$, we get 
\begin{align*}
\int\limits_{\mathcal{B}(0, Q)} e\left(A \left(x-\frac{B}{2A}\right)^2 \right) dx = & 
\int\limits_{\mathcal{B}(B/(2A), Q)} e\left(A x^2 \right) dx. 
\end{align*}  
Now observe that it is always possible to write $A=\alpha A_2$, where $A_2\in \BF_q(t)_{\infty}^2\setminus\{0\}$ and 
$\alpha=ct^{-\epsilon}$ with $c\in \BF_q^{\ast}$ and $\epsilon\in \{0,1\}$
suitable. 
Then making the change of variables $y=\sqrt{A_2}x$, we obtain
\begin{equation*}
\begin{split}
\int\limits_{\mathcal{B}(0, Q)} e\left(A x^2 \right) dx = & |\sqrt{A_2}|_\infty^{-1}
\int\limits_{\mathcal{B}(0, Q+\deg \sqrt{A_2})} e\left( \alpha y^2 \right) dy\\ 
= & q^{-\lceil(\deg A)/2\rceil}
\int\limits_{\mathcal{B}(0, Q+\lceil(\deg A)/2\rceil)} e\left( \alpha y^2 \right) dy
\end{split}
\end{equation*}
and 
\begin{equation*}
\begin{split}
\int\limits_{\mathcal{B}(B/(2A), Q)} e\left(A x^2 \right) dx = & |\sqrt{A_2}|_\infty^{-1}
\int\limits_{\mathcal{B}(B\sqrt{A_2}/(2A), Q+ \deg \sqrt{A_2})} e\left( \alpha y^2 \right) dy\\ = & q^{-\lceil(\deg A)/2\rceil}
\int\limits_{\mathcal{B}(B\sqrt{A_2}/(2A), Q+\lceil (\deg A)/2\rceil)} e\left( \alpha y^2 \right) dy,
\end{split}
\end{equation*}
where we note that
$$
dCx=|C|_{\infty}dx.
$$

Summarizing the above, we have the following.

\begin{Lemma} \label{expinte2} Suppose that $A=\alpha A_2$ with $A_2\in \BF_q(t)_{\infty}^2\setminus\{0\}$ and $\alpha=ct^{-\epsilon}$ for some $c\in \BF_q^{\ast}$ and $\epsilon\in \{0,1\}$. Suppose further that
$B\in \BF_q(t)_{\infty}$. Then
$$
E(A,B) = q^{-\lceil(\deg A)/2\rceil}\cdot e\left(\frac{-B^2}{4A}\right)\cdot \int\limits_{\mathcal{B}(C, Q+
\lceil (\deg A)/2\rceil )} e\left( \alpha y^2 \right) dy,
$$
where 
$$
C:=\begin{cases} 0 & \mbox{ if } |B/A|_\infty\leq q^{Q} \\ B\sqrt{A_2}/(2A) & \mbox{ if } |B/A|_\infty> q^{Q}. \end{cases}
$$
\end{Lemma}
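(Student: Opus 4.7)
The proof proposal essentially just assembles the computations already sketched in the discussion preceding the statement into a clean chain of equalities, so the plan is to organize them in five clear steps.

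\medskip

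\noindent\textbf{Step 1 (reduction to a one-variable integral).} The first step is the change of variable $y=x^2$ in \eqref{expindef}. Since $\dd y=2|x|_{\infty}\,\dd x$ for $y\in\BF_q(t)_{\infty}^{2}\setminus\{0\}$, the Jacobian cancels the weight $(2|\sqrt{y}|_{\infty})^{-1}$, and the region $\CB^{2}(0,2Q)$ pulls back to $\CB(0,Q)$. This gives \eqref{changeofvariables}:
\[
E(A,B)=\int_{\CB(0,Q)} e\!\left(Ax^{2}-Bx\right)\dd x.
\]
One small point to check here is that the square-root function fixed at the beginning of Section~\ref{QEIntegrals} makes $x\mapsto x^{2}$ a bijection between $\CB(0,Q)$ and $\CB^{2}(0,2Q)$ compatible with the chosen $\sqrt{\cdot}$.

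\medskip

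\noindent\textbf{Step 2 (quadratic completion and translation).} Next I would complete the square
\[
Ax^{2}-Bx = A\!\left(x-\tfrac{B}{2A}\right)^{2}-\tfrac{B^{2}}{4A},
\]
which is legitimate because $\mathrm{char}(\BF_q)\ne 2$ so $2\in\BF_q^{\ast}$ and $|2|_{\infty}=1$. Pulling out the constant factor $e(-B^{2}/(4A))$ and shifting $x\mapsto x+B/(2A)$ transforms the region $\CB(0,Q)$ into $\CB(B/(2A),Q)$; but by the ultrametric property, this ball coincides with $\CB(0,Q)$ precisely when $|B/(2A)|_{\infty}\le q^{Q}$, i.e.\ when $|B/A|_{\infty}\le q^{Q}$. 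This produces the case dichotomy defining $C$.

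\medskip

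\noindent\textbf{Step 3 (rescaling via the factorization $A=\alpha A_2$).} Now I substitute $y=\sqrt{A_{2}}\,x$, using that $A_2\in\BF_q(t)_{\infty}^{2}\setminus\{0\}$ so $\sqrt{A_2}$ is well-defined by the convention fixed in Section~\ref{QEIntegrals}. Then $A x^{2}=\alpha y^{2}$, $\dd x=|\sqrt{A_2}|_{\infty}^{-1}\,\dd y$, and the ball $\CB(\ast,Q)$ transforms into $\CB(\ast\cdot\sqrt{A_2},\,Q+\deg\sqrt{A_2})$ (with $\ast\in\{0,B/(2A)\}$). Collecting everything yields exactly the asserted formula, provided one identifies $\deg\sqrt{A_2}=\lceil(\deg A)/2\rceil$.

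\medskip

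\noindent\textbf{Step 4 (the degree identity and existence of the factorization).} The final ingredient is the identity $\deg\sqrt{A_2}=\lceil(\deg A)/2\rceil$. I split into the two cases $\epsilon=0$ and $\epsilon=1$: since $|\alpha|_{\infty}=q^{-\epsilon}$, we have $\deg A=\deg A_2-\epsilon$, and $\deg A_2$ is even because $A_2$ is a square, so $\deg\sqrt{A_2}=\tfrac{1}{2}\deg A_2 = \lceil (\deg A)/2\rceil$ in either case. I also need to verify the existence of the factorization $A=\alpha A_2$: writing $A=\sum_{i\le R}c_i t^i$ with $c_R\ne 0$, one chooses $\epsilon\in\{0,1\}$ so that $R+(-\epsilon)\cdot(\text{no, wait})$ --- more simply, pick $\epsilon\equiv R\pmod 2$ so that $R+\epsilon$ is even, and then choose $c\in\BF_q^{\ast}$ so that $c_R/(c\cdot (-1)^{\epsilon})$ is a square in $\BF_q^{\ast}$ (such $c$ exists since there are $(q-1)/2$ squares and $(q-1)/2$ non-squares). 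Then $A_2:=A/\alpha$ has even degree and square leading coefficient, hence lies in $\BF_q(t)_{\infty}^{2}\setminus\{0\}$.

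\medskip

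The main obstacle I anticipate is bookkeeping in Step~3: one has to be careful that the affine substitution $y=\sqrt{A_2}\,x$ sends $\CB(B/(2A),Q)$ to $\CB(B\sqrt{A_2}/(2A),\,Q+\deg\sqrt{A_2})$ and not to some other translate, and that the Jacobian factor $|\sqrt{A_2}|_{\infty}^{-1}=q^{-\lceil(\deg A)/2\rceil}$ lands outside the integral with the correct sign of exponent. Beyond that, the argument is essentially a direct transcription of the preliminary calculation appearing just before the lemma.
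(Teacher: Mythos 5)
Your proposal is correct and follows the same route the paper itself uses (the lemma is established not in a separate proof environment but in the discussion immediately preceding it): the change of variable $y=x^2$, quadratic completion, the ultrametric identity $\mathcal{B}(B/(2A),Q)=\mathcal{B}(0,Q)$ when $|B/A|_\infty\le q^Q$, and rescaling by $\sqrt{A_2}$. Your Step 4 usefully makes explicit the degree identity $\deg\sqrt{A_2}=\lceil(\deg A)/2\rceil$ and the existence of the factorization $A=\alpha A_2$, which the paper asserts without elaboration; just note that the leading coefficient of $A_2=At^{\epsilon}/c$ is $c_R/c$ rather than $c_R/(c\cdot(-1)^{\epsilon})$, a slip that has no effect on the existence conclusion.
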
 

It remains to evaluate integrals of the form $\int\limits_{\mathcal{B}(x, n)} e( \alpha y^2 ) dy$, which is done in the following lemma.

\begin{Lemma} \label{expinte3} Suppose that $\alpha=ct^{-\epsilon}$ for some $c\in \BF_q^{\ast}$ and $\epsilon\in \{0,1\}$. Suppose further that
$x\in \BF_q(t)_{\infty}$ and $n \in \mathbb{Z}$. Then 
\begin{equation} \label{infinalev}
\int\limits_{\mathcal{B}(x, n)} e(\alpha y^2)dy=\begin{cases} q^{n+1} & \mbox{ if } \deg x\le n\le -1\\
1+\epsilon(s(c)q^{1/2}-1) & \mbox{ if } \deg x\le n \mbox{ and } n\ge 0\\
e(\alpha x^2)\cdot q^{n+1} & \mbox{ if } \min\{\deg x,-\deg x-1+\epsilon\}>n
\\ 0 & \mbox{ if } \deg x> n\ge -\deg x-1+\epsilon, \end{cases}
\end{equation}
where 
\begin{equation} \label{scdef}
s(c)= \begin{cases}
+1 & \mbox{ if } c \mbox{ is a square in } \BF_q^{\ast}\\ -1 & \mbox{ otherwise.}  
\end{cases}
\end{equation}
\end{Lemma}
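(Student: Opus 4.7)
The plan is a case-by-case evaluation in which Lemma \ref{Phi1} plays the central role. First observe that when $\deg x \le n$ the ultrametric inequality gives $\mathcal{B}(x,n) = \mathcal{B}(0,n)$, so in Cases 1 and 2 I may assume $x = 0$. For Cases 3 and 4 I translate by writing $y = x + v$ with $v \in \mathcal{B}(0,n)$, producing $\alpha y^2 = \alpha x^2 + 2\alpha xv + \alpha v^2$. The two regimes $n \le -1$ and $n \ge 0$ are then treated separately.

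Whenever $n \le -1$ (which covers Cases 1 and 3 and the $n \le -1$ portion of Case 4), $|\alpha v^2|_\infty \le q^{2n - \varepsilon} \le q^{-2}$ uniformly on $\mathcal{B}(0,n)$, so $e(\alpha v^2) = 1$ and the integral collapses to $e(\alpha x^2) \int_{\mathcal{B}(0,n)} e(2\alpha xv)\,dv$. After the substitution $v = t^{n+1} w$ with $w \in \mathcal{B}(0,-1)$, Lemma \ref{Phi1} yields
\[
\int_{\mathcal{B}(0,n)} e(yv)\,dv = q^{n+1}\Phi_1(t^{n+1} y),
\]
which equals $q^{n+1}$ precisely when $|y|_\infty \le q^{-n-2}$. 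With $y = 2\alpha x$ this reads $\deg x \le -n - 2 + \varepsilon$: it holds in Case 1 (where additionally $e(\alpha x^2)=1$ because $|\alpha x^2|_\infty \le q^{-2}$) and in Case 3 (by the defining inequality $-\deg x - 1 + \varepsilon > n$), while it fails in Case 4 (since $\deg x \ge -n - 1 + \varepsilon$), producing the stated values $q^{n+1}$, $e(\alpha x^2) q^{n+1}$, and $0$ respectively.

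When $n \ge 0$ (which covers Case 2 and the $n \ge 0$ portion of Case 4), I partition
\[
\mathcal{B}(x,n) = \bigsqcup_{\substack{h \in \BF_q[t]\\ \deg h \le n}} \bigl(x + h + \mathcal{B}(0,-1)\bigr)
\]
and set $y = x + h + u$ with $u \in \mathcal{B}(0,-1)$; again $e(\alpha u^2) = 1$, and Lemma \ref{Phi1} reduces the inner integral to $\Phi_1(2\alpha(x+h))$. The total becomes
\[
\sum_{\substack{\deg h \le n\\ |x+h|_\infty \le q^{\varepsilon - 1}}} e\bigl(\alpha(x+h)^2\bigr).
\]
In Case 2 (so $x = 0$) the constraint $|h|_\infty \le q^{\varepsilon - 1}$ forces $h = 0$ when $\varepsilon = 0$, giving the value $1$, and $h \in \BF_q$ when $\varepsilon = 1$, giving $\sum_{h \in \BF_q} E(ch^2)$. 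In Case 4, $\deg h \le n < \deg x$ forces $\deg(x+h) = \deg x \ge 1 \ge \varepsilon$, so $|x+h|_\infty \ge q > q^{\varepsilon - 1}$ and the sum is empty.

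The one nontrivial arithmetic step is the $\varepsilon = 1$ instance of Case 2, where one must show $\sum_{h \in \BF_q} E(ch^2) = s(c) q^{1/2}$. Splitting $\sum_{h \in \BF_q} E(ch^2) = \sum_{g \in \BF_q} (1 + \chi(g)) E(cg)$ with $\chi$ the quadratic character on $\BF_q^\ast$, and using the orthogonality relation $\sum_{g \in \BF_q} E(cg) = 0$, reduces the sum to $\chi(c) \tau(\chi)$, where $\tau(\chi)$ is the classical Gauss sum of modulus $q^{1/2}$. The main obstacle is to reconcile the sign and phase of $\tau(\chi)$ with the square-root convention fixed at the beginning of Section \ref{QEIntegrals}, so that the product reads as the real value $s(c) q^{1/2}$ appearing in the statement; once this compatibility is checked via the standard quadratic Gauss sum evaluation, all four cases are established.
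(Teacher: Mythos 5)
Your approach is genuinely different from the paper's. You translate $y=x+v$, expand $\alpha(x+v)^2=\alpha x^2+2\alpha xv+\alpha v^2$, and use Lemma \ref{Phi1} to evaluate the resulting linear-character integrals, whereas the paper writes out the Laurent expansion $y=\sum_i c_it^i$, isolates the coefficient of $t^{-1}$ in $\alpha y^2$ directly, and kills the exponential by orthogonality over a single free coefficient $c_{-R-1+\varepsilon}$. Your route is cleaner and more systematic: the $n\le-1$ cases collapse uniformly to the single indicator $\Phi_1(2\alpha x\,t^{n+1})$, and the $n\ge0$ case reduces transparently to the short polynomial sum $\sum_{\deg h\le n,\;|x+h|_\infty\le q^{\varepsilon-1}}e(\alpha(x+h)^2)$. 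The structural bookkeeping is correct and delivers Cases 1, 3, 4 and the $\varepsilon=0$ part of Case 2.

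The gap is the final Gauss sum step, which you flag as ``the main obstacle'' but leave as a ``compatibility check''; that check fails, because the identity to be checked is false. One has $\sum_{h\in\BF_q}E(ch^2)=s(c)\,\tau$ with $\tau=\sum_{h\in\BF_q}E(h^2)$, and by Gauss's evaluation combined with the Davenport--Hasse relation, for $q=p^f$ one gets $\tau=(-1)^{f-1}\sqrt{q}$ if $p\equiv1\bmod4$ and $\tau=(-1)^{f-1}i^f\sqrt{q}$ if $p\equiv3\bmod4$. This is not $q^{1/2}$ in general: $\sum_{h\in\BF_3}E(h^2)=i\sqrt3$, and $\sum_{h\in\BF_{25}}E(h^2)=-5$. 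The square-root convention fixed in Section \ref{QEIntegrals} concerns the map $\sqrt{\,\cdot\,}$ on $\BF_q(t)_\infty^2$ and has no bearing on the complex scalar $\tau$, so it cannot rescue the equality. Hence the value $1+\varepsilon(s(c)q^{1/2}-1)$ stated for Case 2 is wrong for such $q$; the paper's own proof makes the same unjustified assertion (``the latter is a classical quadratic Gauss sum and has the value $s(c)q^{1/2}-1$''). What is true, and all that the paper actually uses later (only $|\sigma(\ell)|=1$ enters the subsequent estimates), is that the Case 2 integral has modulus $q^{\varepsilon/2}$; the lemma should be weakened to that form, or $q^{1/2}$ should be replaced by $\tau$.
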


\begin{proof}
First, just using the definition of $e(\cdots)$, we observe that 
\begin{equation} \label{particular}
\int\limits_{\mathcal{B}(0,n)} e(\alpha y^2) dy= \mu(\mathcal{B}(0,n))= q^{n+1}
\end{equation}
if $n\le -1$.  

Now suppose that $\deg x\le n$. Then
$$
\int\limits_{\mathcal{B}(x, n)} e(\alpha y^2 ) dy= \int\limits_{\mathcal{B}(0, n)} e(\alpha y^2 ) dy.
$$
If $n\le -1$, it follows that
$$
\int\limits_{\mathcal{B}(x, n)} e(\alpha y^2 ) dy=q^{n+1}
$$
using \eqref{particular}. If $n\ge 0$, then we get
$$
\int\limits_{\mathcal{B}(x, n)} e( \alpha y^2 ) dy= \int\limits_{\mathcal{B}(0, n)} e(\alpha y^2 ) dy = 
\int\limits_{\mathcal{B}(0, n)\setminus \mathcal{B}(0,-1)} e(\alpha y^2 ) dy+1,
$$
again using \eqref{particular}. Writing 
\begin{equation} \label{yexp}
y=\sum\limits_{i=-\infty}^R c_it^i \quad \mbox{ with } R\ge 0, \ c_R\not=0,
\end{equation}
we further have 
\begin{equation*}
\begin{split}
& \int\limits_{\mathcal{B}(0, n)\setminus \mathcal{B}(0,-1)} e(\alpha y^2 ) dy\\ = & \sum\limits_{R=0}^{n}\
\int\limits_{\deg y=R} E\left( \TS c\sum\limits_{i =0}^{R}z_ic_ic_{-i-1+\epsilon}\right)dy\\
= & \sum\limits_{R=0}^{n}\sum\limits_{k\in \BF_q}E(k)\mu\Big(\Big\{\deg y=R \ :\ c\sum\limits_{i =0}^{R}z_ic_ic_{-i-1+\epsilon} =k \Big\}\Big)\\
= & \sum\limits_{R=0}^{n}\sum\limits_{k\in \BF_q}E(k)\mu\Big(\Big\{\deg y=R \ :\ c_{-R-1+\epsilon}=(z_Rcc_R)^{-1}
\Big(k-c\sum\limits_{i =0}^{R-1}z_ic_ic_{-i-1}\Big)\Big\}\Big),
\end{split}
\end{equation*}
where 
\begin{equation} \label{zi}
z_i:=\begin{cases} 2 & \mbox{ if } (i,\epsilon)\not=(0,1) \\ 1 & \mbox{ if } (i,\epsilon)=(0,1). \end{cases}
\end{equation}
If $(R,\epsilon)\not=(0,1)$, then the measure in the last line is independent of $k$, namely
\begin{equation*}
\mu\Big(\Big\{\deg y=R \ :\  c_{-R-1+\epsilon}=(z_Rcc_R)^{-1}
\Big(k-c\sum\limits_{i =0}^{R-1}z_ic_ic_{-i-1}\Big)\Big\}\Big)
= q^R\left(1-\frac{1}{q}\right).
\end{equation*}
From the orthogonality relation 
\begin{equation} \label{ortho}
\sum\limits_{k\in \BF_q}E(k)=0,
\end{equation}
it then follows that the contribution of $(R,\epsilon)\not=(0,1)$ equals 0.
If $(R,\epsilon)=(0,1)$, then
\begin{equation*}
\begin{split}
& \sum\limits_{k\in \BF_q}E(k)\mu\Big(\Big\{\deg y=R \ :\ c_{-R-1+\epsilon}=(z_Rcc_R)^{-1}
\Big(k-c\sum\limits_{i =0}^{R-1}z_ic_ic_{-i-1}\Big)\Big\}\Big)\\
= & \sum\limits_{k\in \BF_q}E(k)\mu\Big(\Big\{\deg y=0 \ :\ c_{0}=(cc_0)^{-1}k\Big\}\Big)
= \sum\limits_{c_0\in \BF_q^{\ast}} E(cc_0^2).
\end{split}
\end{equation*}
The latter is a classical quadratic Gauss sum and has the value
$$
\sum\limits_{c_0\in \BF_q^{\ast}} E(cc_0^2)= \sum\limits_{c_0\in \BF_q} E(cc_0^2) -1= s(c)q^{1/2}-1,
$$
where $s(c)$ is defined as in \eqref{scdef}.
Hence, if $\deg x\le n$ and $n\ge 0$, then we obtain
\begin{equation*}
\int\limits_{\mathcal{B}(x, n)} e(\alpha y^2)dy =1+\epsilon(s(c)q^{1/2}-1). 
\end{equation*}

Now suppose that $\deg x>n$. If $\min\{\deg x,-\deg x-1+\epsilon\}>n$, then, similarly as in \eqref{particular},
$$
\int\limits_{\mathcal{B}(x,n)} e(\alpha y^2) dy = e(\alpha x^2)\cdot \mu((\mathcal{B}(x,n))=e(\alpha x^2)\cdot q^{n+1}. 
$$

Finally, we consider the case when $R:=\deg x> n\ge -\deg x-1+\epsilon$. In this case, $y\in \mathcal{B}(x,n)$ implies $\deg y=R$. Using the 
same notations as in \eqref{yexp} and \eqref{zi}, we get
\begin{equation*}
\begin{split}
& \int\limits_{\mathcal{B}(x,n)} e(\alpha y^2)dy\\ = & 
\int\limits_{\substack{y\in \BF_q(t)_{\infty}\\ \deg(y-x)\le n}} E\left( c\sum\limits_{i =0}^{R} z_ic_ic_{-i-1+\epsilon}\right)dy\\
= & \sum\limits_{k\in \BF_q}E(k)\mu\Big(\Big\{y\in \BF_q(t)_{\infty}\ :\ \deg(y-x)\le n,\ c\sum\limits_{i =0}^{R}z_ic_ic_{-i-1+\epsilon} =k
\Big\}\Big)\\
= & \sum\limits_{k\in \BF_q}E(k)\mu\Big(\Big\{y\in \BF_q(t)_{\infty}\ :\ \deg(y-x)\le n,\\ & \hspace*{5.0cm} c_{-R-1+\epsilon}=(z_Rcc_R)^{-1}
\Big(k-c\sum\limits_{i =0}^{R-1}z_ic_ic_{-i-1+\epsilon}\Big)\Big\}\Big).
\end{split}
\end{equation*}
We observe that the case $(R,\epsilon)=(0,1)$ does not occur here because of our condition $R> n\ge -R-1+\epsilon$, and
therefore  
the measure in the last line is always independent of $k$, namely
\begin{equation*}
\begin{split}
& \mu\Big(\Big\{y\in \BF_q(t)_{\infty}\ :\ \deg(y-x)\le n,\\ & \hspace{3.3cm} c_{-R-1+\epsilon}=(z_Rcc_R)^{-1}
\Big(k-c\sum\limits_{i =0}^{R-1}z_ic_ic_{-i-1+\epsilon}\Big)\Big\}\Big) 
= q^{n}.  
\end{split}
\end{equation*}
Again, from the orthogonality relation \eqref{ortho}, it then follows that
$$
\int\limits_{\mathcal{B}(x, n)} e( y^2 )=0.
$$
Combining everything, we obtain \eqref{infinalev}. \end{proof}

\section{Diophantine approximation}\label{DA}
After having provided the basic tools used in this paper, we are ready to investigate the large sieve with square moduli. We aim to estimate 
the quantity 
\begin{align*}
\sum\limits_{\substack{f \in \BF_q[t]\\ \deg f=Q}} \ \sum\limits_{\substack{r \bmod f^2,\\ (r,f)=1}}\mathrel \bigg 
|\sum\limits_{\substack{g \in \BF_q[t]\\ \deg g\le N}} a_g  e\Big(g\cdot \frac{r}{f^2}\Big)\bigg |^2.
\end{align*}
To this end, we use Lemma \ref{LS2}. In our situation, we let $X_1, \cdots, X_R$ be the sequence of Farey fractions $r/f^2$  with $\deg f = Q$, $\deg r \leq 2\deg f-1$
and $(r,f)=1$ so that the above expression equals
\begin{align*}
\sum\limits_{r=1}^{R}\mathrel \bigg |\sum\limits_{\substack{g \in \BF_q[t]\\ \deg g\le N}} a_g  e\Big(g\cdot X_r\Big)\bigg |^2.
\end{align*}
The $Y_l$'s are now chosen as follows. First we set
\[
\tau: = \frac{1}{\sqrt \Delta}
\]
We let $Y_1,Y_2,...,Y_L$ be the points  
\[
\frac{u}{v}+\frac{1}{f_kv^2},
\]
where 
$$
v\in \BF_q[t]\setminus \{0\}, \ |v|_\infty \leq  \tau, \ (u,v)=1, \ \deg u< \deg v,
$$ 
and  the $f_k$'s are polynomials of degree $k \in \BN_{}$ with 
\begin{equation} \label{kcond}
K:=\lceil \log_q \tau-\deg v\rceil \leq k\leq \kappa:=\lceil2\log_q \tau-2\deg v\rceil.
\end{equation}
We want to show that the $ Y_l$'s above satisfy the conditions in Lemma \ref{LS2} if the $f_k$'s are chosen suitably.

By Dirichlet's approximation theorem for function fields, Theorem \ref{DFF} with dimension $n=1$, every $x\in \BF_q(t)_\infty$ can be written in the form 
\begin{align}\label{cond1}
x = \frac{u}{v} + z,\hspace{.6cm} \text{where}\, \, |v|_\infty \leq  \tau, \, \, (u,v)=1, \,\, |vz|_\infty\leq \frac{1}{\tau}.
\end{align}
We must show that for every $z$ with $|vz|_\infty\leq {\tau}^{-1}$, there exists $k\in \BN_{}$ satisfying \eqref{kcond} such that 
\[
\mathrel \Big |z-\frac{1}{f_kv^2}\mathrel\Big|_\infty \leq \Delta.
\]
First, if $\deg z \leq -(2 \deg v +\kappa)$, then 
\[
\mathrel \Big |z-\frac{1}{f_\kappa v^2}\mathrel\Big|_\infty \leq  \max\mathrel\Big\{|z|_\infty, \mathrel \Big|\frac{1}{f_\kappa v^{2}}\mathrel\Big|_\infty \mathrel \Big\}\leq q^{-\kappa -2\deg v}\leq q^{-2\log_q \tau} = \Delta.
\]
Otherwise, if  $\deg z > -(2 \deg v +\kappa) = -\lceil 2\log_q \tau\rceil$, then 
\[
\log_q \tau+\deg v +1 >  \deg f_K v^2\geq  \log_q \tau+\deg v, 
\]
i.e. 
\[
-\log_q \tau-\deg v -1 < \deg \frac{1}{f_K v^2} \leq  -\log_q \tau-\deg v.
\]
Since 
$$
-\lceil 2\log_q \tau\rceil<\deg z \leq  -\log_q \tau-\deg v,
$$
we can now choose  $f_k$ with  $k\geq K$ in such a way that the leading coefficient  of $z$ is cancelled by that of $1/(f_k v^2)$, so that 
\[
\deg \left(z-\frac{1}{f_kv^2}\right) \leq  -\lceil 2\log_q \tau\rceil,
\]
i.e.
\[
\mathrel \Big |z-\frac{1}{f_kv^2}\mathrel\Big|_\infty \leq \Delta. 
\]

For $x\in \BF_q(t)_\infty$ we put
\[
P(x): = \sum\limits_{\substack{\deg f = Q,\, (r,f)=1\\ |r/f^2-x|_\infty\leq \Delta}} 1.
\]
Then we have 
\[
K'(\Delta)\leq  \max_{1\leq l \leq L } P(Y_l).
\]
Summarizing the above observations, we deduce the following.

\begin{Lemma} \label{K-P} We have 
\[
K'(\Delta)\leq  \max_{\substack{v\in \BF_q[t]\setminus \{0\}\\ \deg v \leq \log_q \tau} }\,\max_{\substack{u\in \BF_q(t)\\ (u,v)=1}}\, \max_{K\leq k\leq \kappa}P\Big(\frac{u}{v}+ \frac{1}{f_kv^2}\Big).
\]
\end{Lemma}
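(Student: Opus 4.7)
The plan is to invoke Lemma \ref{LS2} with the Farey fractions $r/f^2$ (of degree $\deg f = Q$, $(r,f)=1$, $\deg r < 2Q$) as $X_1,\dots,X_R$ and with the points $\frac{u}{v} + \frac{1}{f_k v^2}$ as $Y_1,\dots,Y_L$, and then to identify the resulting $K'(\Delta)$ with a maximum of the counting function $P$. Most of the work has already been carried out in the preceding paragraphs; the lemma is essentially an assembly statement.

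The first step is to verify the covering hypothesis of Lemma \ref{LS2}: every $x \in \BF_q(t)_\infty$ lies within $\Delta$ of some $Y_l$. Dirichlet's theorem (Theorem \ref{DFF} with $n=1$) yields $x = u/v + z$ with $|v|_\infty \leq \tau$, $(u,v)=1$ and $|vz|_\infty \leq \tau^{-1}$, and I would split on the size of $z$. If $\deg z \leq -(2\deg v + \kappa)$, the choice $k=\kappa$ works since both $z$ and $1/(f_\kappa v^2)$ have norm at most $\Delta$. Otherwise, I pick $k \in [K,\kappa]$ so that $\deg(1/(f_k v^2)) = \deg z$ and choose the leading coefficient of $f_k$ to cancel the leading term of $z$; the resulting difference has norm at most $q^{-\lceil 2\log_q \tau\rceil} = \Delta$. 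The definitions of $K,\kappa$ in \eqref{kcond} are engineered precisely so that this degree-matching is always feasible within the admissible range of $k$.

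With the covering hypothesis in hand, Lemma \ref{LS2} gives
$$
K'(\Delta) = \max_{1\leq l\leq L}\#\{r : \|X_r - Y_l\|\leq \Delta\}.
$$
The second step is to identify this count with $P(Y_l)$. Since $\deg u < \deg v$ and $k + 2\deg v \geq \log_q \tau \geq 0$, every $Y_l$ satisfies $|Y_l|_\infty \leq 1/q$, so the condition $|r/f^2 - Y_l|_\infty \leq \Delta \leq 1/q$ automatically forces $\deg r < 2Q$. Consequently, the torus metric $\|X_r - Y_l\|$ agrees with $|X_r - Y_l|_\infty$ on the relevant representatives (any nontrivial shift by $f^2 h$ would push the difference to norm $\geq 1$), and the count equals $P(Y_l)$ on the nose. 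Taking the maximum over the indexing parameters $(v,u,k)$ yields the stated bound.

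The main obstacle is really the non-trivial case of the covering step: ensuring that by varying $k$ over $[K,\kappa]$ and freely choosing the leading coefficient of $f_k$, one covers every $z$ arising from Dirichlet's theorem. This has already been dealt with in the discussion preceding the lemma, so what remains is bookkeeping and the modest observation that $|Y_l|_\infty \leq 1/q$, which makes the passage from the torus count to $P(Y_l)$ clean.
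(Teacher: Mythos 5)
Your proof takes essentially the same route as the paper: you invoke Lemma \ref{LS2}, verify the covering hypothesis via Dirichlet's theorem and a case split on $\deg z$, choose $k$ in $[K,\kappa]$ by degree-matching with the leading term cancellation, and then identify $K'(\Delta)$ with $\max_l P(Y_l)$. Your added observation that $|Y_l|_\infty\le 1/q$ (so that the torus metric $\|\cdot\|$ and the absolute value $|\cdot|_\infty$ coincide on $X_r - Y_l$ and the count equals $P(Y_l)$ exactly) is a useful clarification that the paper leaves implicit, but it does not change the argument.
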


By the preceding lemma, it suffices to estimate $P(x)$ for $x$ of the form
\begin{align}\label{cond2}
x = \frac{u}{v} + z,\hspace{.6cm} \text{where}\, \, |v|_\infty \leq  \tau, \, \, (u,v)=1, \,\, z= 1/f_kv^2,
\end{align}
where $f_k$ is a polynomial of degree $k \in \BN$ satisfying \eqref{kcond}. We note that $x$ satisfies \eqref{cond1} if it satisfies \eqref{cond2}.

\section{First estimate for $P(x)$}
In this section, we establish a first estimate for $P(x)$ by applying Poisson summation, a Weyl shift to reduce quadratic to linear exponential 
sums and a counting argument.
First we set 
\begin{align*}
\Phi(x): & = 
\begin{cases}
1, &  \text{if} \ |x|_\infty \leq 1 \\
0, & \text{otherwise}
\end{cases} \\
&=
\begin{cases}
1, &  \text{if} \ |\frac{x}{t}|_\infty \leq \frac{1}{q} \\
0, & \text{otherwise}.
\end{cases}
\end{align*}
Then $\Phi(x)= \Phi_1(x/t)$ with 
\[
\Phi_1(x)=
\begin{cases}
1, &  \text{if} \ |x|_\infty \leq \frac{1}{q} \\
0, & \text{otherwise}.
\end{cases}
\]
Now define  
\begin{equation} \label{deltadef}
\omega: = \lceil \log_q\Delta\rceil + 1.
\end{equation}
Then it follows that
\begin{equation*}
P(x) \leq \sum\limits_{\deg f = Q}\sum\limits_{r\in \BF_q[t]} \Phi_1\Big(\frac{r-x f^2}{f^2 t^\omega}\Big).
\end{equation*}

Applying the Poisson summation formula, Lemma \ref{Poisson},   
with a linear change of variable to the sum over $r$, and using $\Phi_1=\hat{\Phi}_1$ (see Lemma \ref{Phi1}), we deduce that 
\begin{equation*}
P(x)\le q^{2Q+\omega} \sum\limits_{\deg f = Q}\sum\limits_{r'\in \BF_q[t]} \Phi_1\big(f^2 t^\omega r'\big)e(x r' f^2),
\end{equation*}
which implies
\begin{equation} \label{Pxbound}
\begin{split}
P(x)\le & q^{2Q+\omega} \sum\limits_{\substack{r'\in \BF_q[t]\\ \deg r'\leq -2Q-\omega -1} }\sum\limits_{\deg f = Q}e(x r' f^2)\\
\leq & q^{3Q+\omega+1}+q^{2Q+\omega} \sum\limits_{\substack{r'\in \BF_q[t]\\ 0\le \deg r'\leq -2Q-\omega -1} }\mathrel\Bigg|\sum\limits_{\deg f = Q}e(x r' f^2)\mathrel\Bigg|,
\end{split}
\end{equation}
where the second line arises from isolating the contribution of $r'=0$. 
Applying the Cauchy-Schwarz inequality and 
$$
\sum\limits_{\substack{r'\in \BF_q[t]\\ 0\le \deg r'\leq -2Q-\omega -1} } 1 = q^{-2Q-\omega},
$$
we deduce from \eqref{Pxbound} that 
\begin{equation}\label{Eq3}
P(x)^2 \ll  q^{6Q+2\omega+2}+q^{2Q+\omega} \sum\limits_{\substack{r'\in \BF_q[t]\\ 0\le \deg r'\leq -2Q-\omega -1} }\mathrel\Bigg|\sum\limits_{\deg f = Q}e(x r' f^2)\mathrel\Bigg|^2.
\end{equation}

To bound the inner-most sum, we perform a Weyl shift.  Using a change of variables $h=f'-f$, we bound the modulus square by 
\begin{align*}
\mathrel\Bigg|\sum\limits_{\deg f = Q}e(x r' f^2)\mathrel\Bigg|^2 = & \sum\limits_{\substack{\deg f = Q \\ \deg f'= Q}} e\Big(x r' (f'-f)(f'+f)\Big) \\
 = & (q-1)q^{Q} + \sum\limits_{0\le \deg h \leq Q}\sum\limits_{\deg f = Q}e\Big(x r' h(h+2f)\Big)\\
\le  & q^{Q+1} + \sum\limits_{0\le \deg h \leq  Q}\mathrel\bigg|\sum\limits_{\deg f = Q}e\Big(2x r' hf\Big)\mathrel\bigg|.
\end{align*}
Combining this with \eqref{Eq3}, we deduce that 
\begin{align*}
P(x)^2 \ll_q  & q^{6Q+2\omega} + q^{3Q+\omega}+q^{2Q+\omega}\sum\limits_{\substack{r'\in \BF_q[t]\\ 0\le \deg r'\leq -2Q-\omega -1} } 
\sum\limits_{0\le \deg h \leq  Q}\mathrel\bigg|\sum\limits_{\deg f = Q}e\Big(2x r' hf\Big)\mathrel\bigg| \\
\le & q^{6Q+2\omega} + q^{3Q+\omega}+q^{2Q+\omega}
\sum\limits_{\substack{l\in \BF_q[t]\\ 0\le \deg l \leq -Q -\omega -1} }\tau(l) \mathrel\Big|\sum\limits_{\deg f = Q}e\Big(xlf\Big)\mathrel\Big|,
\end{align*}
where $\tau(l)$ is the number of divisors of $ l= 2r'h$  in $\BF_q[t]$. Here we recall our assumption that $q$ is not a power of 2. We note that 
\begin{equation} \label{tauest}  
\tau(l) \le 2^{\deg l}q.
\end{equation}
It follows that 
\begin{equation}\label{Eq3.1}
P(x)^2 \ll_q  q^{6Q+2\omega} + q^{3Q+\omega}+ 2^{-Q -\omega}q^{2Q+\omega}
\sum\limits_{\substack{l\in \BF_q[t]\\ 0\le \deg l \leq -Q -\omega -1} } \mathrel\Big|\sum\limits_{\deg f = Q}e\Big(xlf\Big)\mathrel\Big|.
\end{equation}

Now, using the Poisson summation formula, Lemma \ref{Poisson}, and $\Phi_1=\hat{\Phi}_1$ again, we have 
\begin{equation*}
\begin{split}
\sum\limits_{\deg f = Q}e\big(x  lf\big) =  & \sum\limits_{\deg f \le Q}e\big(x  lf\big) - \sum\limits_{\deg f \le Q-1}e\big(xlf\big)\\
= & \sum\limits_{ f \in \BF_q[t]}e\big(xlf\big) \Phi_1\big(t^{-Q-1}f\big)  - \sum\limits_{ f \in \BF_q[t]}e\big(xlf\big) \Phi_1\big(t^{-Q}f\big)\\
= & q^{Q+1}\sum\limits_{ f \in \BF_q[t]}\hat \Phi_1\Big(t^{Q+1}(f-xl)\Big) - q^{Q}\sum\limits_{ f \in \BF_q[t]}\hat \Phi_1\Big(t^{Q}(f-xl)\Big).
\end{split}
\end{equation*}
We observe that, for $n\in \mathbb{N}$,
$$
\sum\limits_{ f \in \BF_q[t]}\hat \Phi_1\Big(t^{n}(f-xl)\Big) = \begin{cases}
1, &  \, \text{if}\, \, \|xl\| \leq q^{-n-1} \\
0, & \, \text{otherwise},
\end{cases}
$$
with $||xl||$ as defined in \eqref{metric}. It follows that
\begin{equation} \label{Eq4}
\begin{split}
\left| \sum\limits_{\deg f = Q}e\big(xlf\big) \right| = & \begin{cases}
-q^{Q}, &  \, \text{if } \|xl\| = q^{-Q-1} \\
q^{Q+1}-q^Q &  \, \text{if } \|xl\|  \leq q^{-Q-2} \\
0, & \, \text{otherwise}
\end{cases}\\
\le & q^{Q+1}\mathcal{I}_{q^{-Q-1}}(\|xl\|),
\end{split}
\end{equation}
where 
$$
I_y(x):=\begin{cases} 1 & \mbox{ if } x\le y\\ 0 & \mbox{ if } x>y. \end{cases}
$$
Combining \eqref{Eq3.1} and \eqref{Eq4}, we have
\begin{equation}\label{Eq5}
P(x)^2\ll q^{6Q+2\omega} + q^{3Q+\omega}+ 2^{-Q -\omega}q^{3Q+\omega}
\sum\limits_{\substack{l\in \BF_q[t]\\ 0\le \deg l \leq -Q -\omega -1} } \mathcal{I}_{q^{-Q-1}}(\|xl\|).
\end{equation}
 
Recall that $x=u/v+z$. Writing $l = Av + k$ with unique $A,k\in \BF_q[t]$ such that
$\deg k <\deg v$, we transform the sum over $l$ in equation (\ref{Eq5}) into 
\begin{equation} \label{transform}
\begin{split}
& \sum\limits_{\substack{l\in \BF_q[t]\\ 0\le \deg l \leq -Q -\omega -1} } \mathcal{I}_{q^{-Q-1}}(\|xl\|) \\
\le & \sum\limits_{\substack{\deg A \le \\ -Q -\omega -1-\deg v}} ~~~~\sum\limits_{\deg k  \leq \deg v-1} \mathcal{I}_{q^{-Q-1}}
\mathrel \bigg(\mathrel\Big\|\mathrel\Big(\frac{u}{v} + z\mathrel\Big)(Av+k)\mathrel\Big\|\mathrel\bigg)\\
= & \sum\limits_{\substack{\deg A \le \\ -Q -\omega -1-\deg v}} ~~~~\sum\limits_{\deg k  \leq \deg v-1} \mathcal{I}_{q^{-Q-1}}\mathrel \bigg(\mathrel\Big\|
Avz +\frac{ku}{v}+ kz\mathrel\Big\|\mathrel\bigg).
\end{split}
\end{equation}
Now assume $k_1\not=k_2$ and $\deg k_1,\deg k_2  \leq \deg v-1$. Using the triangle inequality, we have
\begin{equation*}
\begin{split}
& \left| \left\{ Avz +\frac{k_1u}{v}+ k_2 \right\}- \left\{Avz +\frac{k_2u}{v}+ k_2z \right\} \right|_\infty \\
= & \mathrel\Big\| \frac{(k_1-k_2)u}{v}+ (k_1-k_2)z \mathrel\Big\|\\ \ge & 
\mathrel\Big\|\frac{(k_1-k_2)u}{v} \mathrel\Big\| - ||(k_1-k_2)z||.
\end{split}
\end{equation*}
Furthermore,
$$
\|(k_1-k_2)z\| < |vz|_\infty \leq \Delta^{1/2}\leq |v|^{-1}_\infty
$$
and hence 
$$
\|(k_1-k_2)z\|_\infty \le |v|^{-1}_\infty q^{-1}.
$$
It follows that
$$
\mathrel\Big\|\frac{(k_1-k_2)u}{v} \mathrel\Big\| - ||(k_1-k_2)z||_{\infty}
\ge |v|^{-1}_\infty-|v|^{-1}_\infty q^{-1} \ge
|v|^{-1}_\infty q^{-1}
$$
and hence
\begin{equation} \label{dist}
\mathrel\Bigg| \mathrel\Big\| Avz +\frac{k_1u}{v}+ k_2 \mathrel\Big\|- \mathrel\Big\|Avz +\frac{k_2u}{v}+ k_2z \mathrel\Big\| \mathrel\Bigg|_\infty \ge |v|^{-1}_\infty 
q^{-1}.
\end{equation}

The maximum number of points in $\BF_q(t)_\infty$ of mutual distance greater or equal $d=q^{-D}$ fitting into the ball $\{z\in \BF_q(t)_\infty\ :\ |z|_{\infty}\le q^{-E}\}$ 
is bounded by $1+q^{D-E+1}$. Hence, taking \eqref{dist} into account, we deduce that
$$
\sum\limits_{\deg k  \leq \deg v-1} \mathcal{I}_{q^{-Q-1}}\mathrel \bigg(\mathrel\Big\|
Avz +\frac{ku}{v}+ kz\mathrel\Big\|\mathrel\bigg) \le 1+\|v\|_\infty q^{-Q} = 1+q^{\deg v - Q}.
$$
Combining this with \eqref{transform}, we obtain 
\begin{equation*}
\sum\limits_{\substack{l\in \BF_q[t]\\ \deg l \leq -Q -\omega -1} } \mathcal{I}_{q^{-Q-1}}(\|xl\|) \ll  \left(1+q^{-Q -\omega -\deg v}\right)
\left(1+q^{\deg v-Q}\right),
\end{equation*}
which together with \eqref{Eq5} implies
\begin{equation*}
\begin{split}
P(x)^2\ll_q & q^{6Q+2\omega} + \left(1+2^{-Q -\omega}\right)q^{3Q+\omega}
\left(1+q^{-Q -\omega -\deg v}\right)
\left(1+q^{\deg v-Q}\right)\\ 
\ll_q & \left(q^{6Q}\Delta^2+\left(1+2^{\mathcal{L}-Q}\right)
\left(q^{3Q}\Delta+q^{2Q}|v|_{\infty}^{-1}+q^{2Q}\Delta|v|_\infty+q^Q\right)\right),
\end{split}
\end{equation*}
where here and in the sequel, we set
\begin{equation} \label{caL}
\mathcal{L}:=\log q \Delta^{-1}.
\end{equation}
Taking sqare root, the following bound for $P(x)$ emerges. 

\begin{Proposition}\label{FirstPx} We have 
\begin{equation}\label{Pxfirst}
\begin{split}
P\left(\frac{u}{v} +z\right) \ll_q & q^{3Q}\Delta+\left(1+2^{(\mathcal{L}-Q)/2}\right) \times \\ &  
\left(q^{3Q/2}\Delta^{1/2}+q^{Q}|v|_{\infty}^{-1/2}+q^{Q}\Delta^{1/2}|v|_\infty^{1/2}+q^{Q/2}\right).
\end{split}
\end{equation}
\end{Proposition}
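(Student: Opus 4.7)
The plan is to bound $P(u/v + z)$ by applying the Poisson summation formula twice---once to linearize the counting of Farey fractions, and once after a Weyl shift to reduce quadratic to linear exponential sums in $f$---and then to count the resulting contributions using the Diophantine structure of $x = u/v + z$. Taking the square root of the resulting bound on $P(x)^2$ will yield the proposition.

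First, I would smooth out the indicator of $|r/f^2 - x|_\infty \le \Delta$ using the weight $\Phi_1\bigl((r - xf^2)/(f^2 t^\omega)\bigr)$ with $\omega = \lceil \log_q \Delta \rceil + 1$, reducing $P(x)$ to
$$\sum_{\deg f = Q}\sum_{r \in \BF_q[t]} \Phi_1\bigl((r - xf^2)/(f^2 t^\omega)\bigr).$$
Applying Poisson summation (Lemma \ref{Poisson}) to the inner sum together with the self-duality $\Phi_1 = \hat\Phi_1$ (Lemma \ref{Phi1}) isolates the diagonal $r' = 0$ contribution of size $\sim q^{3Q}\Delta$, matching the first term in the proposition, and leaves an off-diagonal sum $\sum_{r' \neq 0} |\sum_f e(x r' f^2)|$ to be controlled.

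Next, I would square both sides (using Cauchy--Schwarz, which is essentially free since the $r'$-range has length $q^{-2Q - \omega}$) and perform a Weyl shift $f' = f + h$ inside the modulus squared, turning quadratic sums in $f$ into linear ones. The diagonal $h=0$ contributes $\sim q^{Q+1}$ and the off-diagonal term becomes $\sum_{h}|\sum_f e(2xr'hf)|$. Here the assumption $\cha \BF_q \neq 2$ is crucial, so that $2 \in \BF_q^\times$. Collecting the variable $l = 2r'h$ with the divisor-function multiplicity $\tau(l) \le 2^{\deg l} q$ introduces the factor $2^{(\mathcal{L}-Q)/2}$ visible in the bound after taking square roots. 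A second application of Poisson summation to the now-linear sum $\sum_f e(x l f)$ reduces the problem to counting those $l$ of bounded degree for which $\|xl\| \le q^{-Q-1}$.

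Finally, with $x = u/v + z$ and the decomposition $l = Av + k$ where $\deg k < \deg v$, I would use the Diophantine spacing of the residues $ku/v$ modulo $\BF_q[t]$---which are separated by at least $|v|_\infty^{-1}$ when $(u,v)=1$---together with the smallness $\|(k_1 - k_2) z\|$, guaranteed by $|vz|_\infty \le \tau^{-1} = \Delta^{1/2} \le |v|_\infty^{-1}$, to conclude that the number of valid $k$'s for each $A$ is $\ll 1 + q^{\deg v - Q}$. Summing over $A$ in its range of length $q^{-Q-\omega-\deg v}$ produces the product $(1+q^{-Q-\omega-\deg v})(1+q^{\deg v - Q})$, whose four cross-terms correspond to the four summands inside the parentheses in \eqref{Pxfirst}. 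The main obstacle is keeping the spacing argument clean in the metric on $\BF_q(t)_\infty/\BF_q[t]$: one must verify via the ultrametric triangle inequality that the perturbation by $Avz + kz$ genuinely does not destroy the separation supplied by the rational part $ku/v$, and one must also ensure that the divisor-function contribution is only the claimed $2^{(\mathcal{L}-Q)/2}$ loss rather than something worse.
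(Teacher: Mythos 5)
Your proposal reproduces the paper's argument step for step: smooth with $\Phi_1$ and $\omega = \lceil \log_q\Delta\rceil + 1$, Poisson to isolate the $r'=0$ term of size $q^{3Q}\Delta$, Cauchy--Schwarz plus a Weyl shift $h = f'-f$ (using $\cha\BF_q \ne 2$), collect $l = 2r'h$ with the divisor bound $\tau(l)\le 2^{\deg l}q$ giving the factor $2^{(\mathcal{L}-Q)/2}$, a second Poisson to reduce to counting $\|xl\|\le q^{-Q-1}$, and finally the $l = Av+k$ decomposition with the ultrametric spacing argument $(1+q^{-Q-\omega-\deg v})(1+q^{\deg v-Q})$ whose expansion yields the four terms in the parentheses. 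This is the same route as the paper, and the reasoning is correct.
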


\section{Second estimate for $P(x)$}
In this section, we shall prove another estimate for $P(x)$, defined in \eqref{Pdef}, which will follow from a more general estimate for a 
corresponding quantity counting Farey
fractions with denominators from a general set in place of squares. As a by-product, we obtain a large sieve inequality for 
\begin{equation} \label{generalls}
\sum\limits_{f \in \CS} \ \sum\limits_{\substack{r \bmod f\\ (r,f)=1}}\mathrel \bigg 
|\sum\limits_{\substack{g \in \BF_q[t]\\ \deg g\le N}} a_g  e\Big(g\cdot \frac{r}{f}\Big)\bigg |^2,
\end{equation}
where 
$$
\CS\subset \mathcal{B}(0,Q_0)\cap (\BF_q[t]\setminus \{0\})
$$
which we shall assume henceforth. 

In analogy to section \ref{DA}, we let $X_1, \cdots, X_R$ be the sequence of Farey fractions $r/f$  with $f\in \CS$, 
$\deg f \leq Q_0$, $\deg r \leq \deg f-1$ and $(r,f)=1$. Hence, the expression in \eqref{generalls} equals
\begin{align*}
\sum\limits_{r=1}^{R}\mathrel \bigg |\sum\limits_{\substack{g \in \BF_q[t]\\ \deg g\le N}} a_g  e\Big(g\cdot X_r\Big)\bigg |^2.
\end{align*}
Again, we set 
\begin{equation}\label{taudef}
\tau: = \frac{1}{\sqrt \Delta},
\end{equation}
and the $Y_l'$s are chosen to be
\[
\frac{u}{v}+\frac{1}{f_kv^2},
\]
where 
$$
v\in \BF_q[t]\setminus \{0\}, \ |v|_\infty \leq  \tau, \ (u,v)=1, \ \deg u< \deg v.
$$ 
The $f_k$'s are polynomials of degree $k \in \BN$ with $k$ satisfying condition \eqref{kcond}, i.e. 
\begin{equation*} 
K:=\lceil \log_q \tau-\deg v\rceil \leq k\leq \kappa:=\lceil2\log_q \tau-2\deg v\rceil.
\end{equation*}
The above inequality implies that 
\[
\Delta \leq \mathrel \Big| \frac{1}{f_kv^2}\mathrel \Big|_\infty \leq \frac{\sqrt{\Delta}}{|v|_\infty}.
\]
Generalising the notion of $P(x)$ in the previous section for $x\in \BF_q(t)_\infty$, we set 
\begin{equation} \label{Pdef}
P_{\CS}(x): = \sum\limits_{\substack{f\in \CS,\, (r,f)=1\\ |r/f-x|_\infty\leq \Delta}} 1.
\end{equation}
As in section \ref{DA}, it follows that
\[
K'(\Delta) \leq   \max_{1\leq l \leq L } P(Y_l).
\]
So we deduce the following.

\begin{Lemma}\label{DA2}
\begin{align*}
K'(\Delta)& \leq  \max_{\substack{v\in \BF_q[t]\setminus \{0\}\\ \deg v \leq \log_q \tau} }\,\max_{\substack{u\in \BF_q(t)\\ (u,v)=1}}\,  
\max_{\substack{k\\ K\leq k\leq \kappa}}P_{\CS}\Big(\frac{u}{v}+ \frac{1}{f_kv^2}\Big)\\
&\leq  \max_{\substack{v\in \BF_q[t]\setminus \{0\}\\ \deg v \leq \log_q \tau} }\,\max_{\substack{u\in \BF_q(t)\\ (u,v)=1}}\,  
\max_{\substack{z \in \BF_q[t]_\infty \\ \Delta\leq |z|_\infty\leq \sqrt \Delta/|v|_\infty}}P_{\CS}\Big(\frac{u}{v}+ z\Big).
\end{align*}
\end{Lemma}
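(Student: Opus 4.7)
The plan is to verify the covering hypothesis of Lemma \ref{LS2} for the chosen sequences $X_r$ and $Y_l$: every $x \in \BF_q(t)_\infty$ should lie within distance $\Delta$ of some $Y_l$. Once this is established, Lemma \ref{LS2} immediately yields $K'(\Delta) \le \max_l P_{\CS}(Y_l)$, which is exactly the first inequality. The second inequality will then follow from a size estimate on $1/(f_k v^2)$.

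For the covering step, I would apply Dirichlet's approximation theorem (Theorem \ref{DFF}) in dimension $n=1$ with the parameter $l = \lceil \log_q \tau \rceil$. This produces for every $x \in \BF_q(t)_\infty$ a decomposition $x = u/v + z$ with $v \in \BF_q[t] \setminus \{0\}$, $|v|_\infty \le \tau$, $(u,v) = 1$, $\deg u < \deg v$, and $|vz|_\infty \le 1/\tau$. It remains to approximate $z$ by some $1/(f_k v^2)$ within $\Delta$, and I would split into two cases according to $\deg z$. When $\deg z \le -\lceil 2 \log_q \tau \rceil$, take $k = \kappa$ and any $f_\kappa$ of degree $\kappa$: then $|1/(f_\kappa v^2)|_\infty \le q^{-\kappa - 2\deg v} \le q^{-2\log_q\tau} = \Delta$, so both $|z|_\infty$ and $|1/(f_\kappa v^2)|_\infty$ are $\le \Delta$, and the non-Archimedean property gives $|z - 1/(f_\kappa v^2)|_\infty \le \Delta$. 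When $-\lceil 2\log_q\tau \rceil < \deg z \le -\log_q \tau - \deg v$, I select the unique $k \in [K, \kappa]$ for which $\deg(1/(f_k v^2))$ matches $\deg z$, and then choose the coefficients of $f_k$ so that the successive leading coefficients of $1/(f_k v^2)$ cancel those of $z$ down to degree $\le -\lceil 2\log_q\tau\rceil$. The details follow verbatim the argument in Section \ref{DA}.

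For the second inequality, I observe that the constraint $K \le k \le \kappa$ forces
$$\Delta \le \left|\frac{1}{f_k v^2}\right|_\infty \le \frac{\sqrt{\Delta}}{|v|_\infty}.$$
Hence the specific choice $z = 1/(f_k v^2)$ always falls into the broader set $\{z \in \BF_q(t)_\infty : \Delta \le |z|_\infty \le \sqrt{\Delta}/|v|_\infty\}$, over which the maximum of $P_{\CS}(u/v + z)$ is at least as large.

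The main obstacle is the coefficient-cancellation argument in the second approximation case: one must produce a polynomial $f_k$ of prescribed degree $k$ whose reciprocal, divided by $v^2$, approximates the given Laurent series $z$ to within $\Delta$. This amounts to successively solving for the coefficients of $f_k$ so as to annihilate the top coefficients of $z - 1/(f_k v^2)$, a triangular system over $\BF_q$ whose solvability is precisely what dictates the bounds $K$ and $\kappa$. Fortunately, the entire argument depends only on the geometry of the approximating points $Y_l$ and not on the set of moduli, which is why it transfers directly from the square-moduli framework of Section \ref{DA} to the general set $\CS$ considered here.
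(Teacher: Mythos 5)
Your proposal matches the paper's proof: it verifies the covering hypothesis via Dirichlet's approximation theorem and coefficient cancellation exactly as in Section \ref{DA}, then observes $1/(f_kv^2)$ lies in the prescribed $z$-range; the paper simply points back to Section \ref{DA} for these identical steps. One small imprecision: the bound $K'(\Delta)\le\max_l P_{\CS}(Y_l)$ is definitional (a maximum over the points $Y_l$ of counts that are exactly $P_{\CS}(Y_l)$), not a consequence of Lemma \ref{LS2}; the covering hypothesis is what licences the later application of Lemma \ref{LS2}, not what produces this inequality.
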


The next lemma gives an estimate for $P_{\CS}(u/v +z)$ in terms of another quantity $\Pi(y,\delta)$ which will then be transformed further.

\begin{Lemma}\label{Bound4} Suppose that the conditions \eqref{cond1} and \eqref{taudef} are satisfied and also suppose that $|z|_\infty \geq \Delta$. Suppose further that $\delta$ is a natural 
number satisfying
\begin{equation}\label{Eqn3}
\frac{q^{Q_0-1}\Delta}{|z|_\infty}\leq q^\delta < q^{Q_0}.
\end{equation}
Let 
\begin{equation}\label{Eqn4}
J(y, \delta): = \mathcal{B}\Big(zvy, \log_q\big(q^{\delta+1} |vz|_\infty\big)\Big)
\end{equation}
and 
\begin{equation}\label{Eqn5}
\Pi(y, \delta): = \sum\limits_{f \in\CS\cap \mathcal{B}(y,\delta)}  \sum\limits_{\substack{g \in J(y, \delta)\\ g \equiv -uf \bmod v \\ g \neq 0} }1.
\end{equation}
Then 
\[
P_{\CS}\left(\frac{u}{v} +z\right)\ll_q  1 +\frac{1}{q^{\delta}} \int\limits_{\mathcal{B}(0, Q_0)}\Pi(y, \delta)dy.
\]
\end{Lemma}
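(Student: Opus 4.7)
My plan is to linearise the Farey-fraction condition, apply the ultrametric triangle inequality to relocate the resulting integer quantity inside the fixed set $J(y,\delta)$, and then replace a discrete sum over ball centres by the integral on the right-hand side.

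First, multiplying the defining inequality $|r/f - u/v - z|_\infty \le \Delta$ from \eqref{Pdef} by $|fv|_\infty$ and setting $g := rv - uf \in \BF_q[t]$, I obtain the linearised form $|g - zvf|_\infty \le \Delta|fv|_\infty$ together with the automatic congruence $g \equiv -uf \pmod{v}$; these are exactly the two constraints appearing in \eqref{Eqn5}.

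The quantitative heart of the argument is the next step. For any $y \in \BF_q(t)_\infty$ and any $f$ with $|f-y|_\infty \le q^\delta$, the ultrametric inequality gives
\[
|g - zvy|_\infty \le \max\bigl\{|g - zvf|_\infty,\, |zv(f-y)|_\infty\bigr\} \le \max\bigl\{\Delta q^{Q_0}|v|_\infty,\, q^\delta|zv|_\infty\bigr\},
\]
where I used $|f|_\infty \le q^{Q_0}$. The hypothesis \eqref{Eqn3}, rewritten as $\Delta q^{Q_0} \le q^{\delta+1}|z|_\infty$, is tailored precisely to force both summands to be $\le q^{\delta+1}|zv|_\infty$, so $g \in J(y,\delta)$. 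I expect this thresholding --- matching the three scales $\Delta|fv|_\infty$, $q^\delta|zv|_\infty$ and the radius of $J(y,\delta)$ --- to be the main technical point of the proof.

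Finally, I would choose centres $y_1,\dots,y_M$ realising a disjoint partition $\mathcal{B}(0,Q_0) = \bigsqcup_{j=1}^{M} \mathcal{B}(y_j,\delta)$ with $M = q^{Q_0-\delta}$, which is possible since $\delta < Q_0$ by the upper bound in \eqref{Eqn3}. Dispatching the degenerate case $g=0$ separately --- it forces $rv=uf$, and $(r,f)=(u,v)=1$ then pins down $f=v$ up to a unit in $\BF_q^\times$, yielding at most $O_q(1)$ pairs --- the previous step gives $P_{\CS}(u/v+z) \ll_q 1 + \sum_{j=1}^{M} \Pi(y_j,\delta)$. To pass to the integral I would observe that $\Pi(\cdot,\delta)$ is constant on each ball $\mathcal{B}(y_j,\delta)$: for $y \in \mathcal{B}(y_j,\delta)$, ultrametricity of balls gives $\mathcal{B}(y,\delta) = \mathcal{B}(y_j,\delta)$, while $|zv(y-y_j)|_\infty \le q^\delta|zv|_\infty < q^{\delta+1}|zv|_\infty$ gives $J(y,\delta) = J(y_j,\delta)$. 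Since each $\mathcal{B}(y_j,\delta)$ has measure $q^{\delta+1}$, this locally constant integrand yields
\[
\sum_{j=1}^{M} \Pi(y_j,\delta) = q^{-\delta-1} \int_{\mathcal{B}(0,Q_0)} \Pi(y,\delta)\,dy,
\]
and the claim follows after absorbing the factor $q^{-1}$ into the $\ll_q$ constant.
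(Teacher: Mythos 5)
Your proposal is correct and follows essentially the same route as the paper: both arguments linearise via $g := rv - uf$, use the ultrametric triangle inequality together with the threshold $\Delta q^{Q_0} \leq q^{\delta+1}|z|_\infty$ from \eqref{Eqn3} to place $g$ in $J(y,\delta)$, and then exchange a count over $f$ for an average over $y$. The only cosmetic difference is in the last step: the paper integrates the counting function $P_{\CS}(x,y,\delta)$ directly over $y\in \mathcal{B}(0,Q_0)$, using that $\mathcal{B}(f,\delta)\subset \mathcal{B}(0,Q_0)$ has measure $q^{\delta+1}$ for each contributing $f$, while you discretise into a partition of balls and invoke local constancy of $\Pi$; these are equivalent bookkeepings, and in either case the excluded case ($g=0$, equivalently $f$ an associate of $v$) contributes $O_q(1)$.
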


\begin{proof}  We first isolate the contribution of $f$'s which are associates to $v$, getting
\begin{equation} \label{splitaway}
P_{\CS}(x)\le q-1+P_{\CS}'(x),
\end{equation}
where
$$
P_{\CS}':=\sum\limits_{\substack{f\in \CS,\, (r,f)=1\\ |r/f-x|_\infty\leq \Delta\\ f\not\approx v}} 1.
$$
Here ``$f\not\approx v$'' means that $f$ is not an associate of $v$.
Now we define 
\begin{equation*}
P_{\CS}(x,y,\delta): = \sum\limits_{\substack{f\in \CS \cap \mathcal{B}(y,\delta)\\ (r,f)=1\\ |r/f-x|_\infty\leq \Delta\\ f\not\approx v}}1.
\end{equation*} 
Since $\delta < Q_0$, we have 
\begin{align}
\int\limits_{\mathcal{B}(0, Q_0)}P_{\CS}(x,y,\delta)dy & = \sum\limits_{\substack{f\in \CS \\ (r,f)=1\\ |r/f-x|_\infty\leq \Delta\\ f\not\approx v}}\int\limits_{\mathcal{B}(0, Q_0)\cap \mathcal{B}(f,\delta)} 1dy
\geq q^{\delta} P_{\CS}'(x)
\end{align}
and hence 
\begin{equation}\label{Eqn6}
P_{\CS}'(x)\leq \frac{1}{q^{\delta}}\int\limits_{\mathcal{B}(0,Q_0)}P_{\CS}(x,y,\delta)dy
\end{equation}
whenever $\delta < Q_0$. 

Now, if $|r/f-x|_\infty\leq \Delta$, then 
$$
|r-fx|_\infty\leq \Delta|f|_\infty, \text{ i.e. } \deg(r-fx)\leq \lceil \log_q\Delta\rceil +\deg f.
$$
From this and $x = u/v + z$, we deduce that 
\[
\mathrel\bigg| r -\frac{fu}{v}-fz\mathrel\bigg|_\infty \leq \Delta |f|_\infty
\]
and hence 
\[
|rv - fu - vfz|_\infty \leq \Delta q^{Q_0} |vz|_\infty/ |z|_\infty.
\]
Since $q^{Q_0-1}\Delta/|z|_\infty\leq q^\delta$, it follows that
\begin{equation}\label{Eqn1}
|rv - fu - vfz|_\infty \leq q^{\delta+1} |vz|_\infty.
\end{equation}
If $f\in \mathcal{B}(y,\delta)$, then 
\begin{equation}\label{Eqn2}
|fvz -yvz|_\infty \leq q^{\delta} |vz|_\infty.
\end{equation}
From (\ref{Eqn1}) and (\ref{Eqn2}), we have 
\[
|rv - fu - yvz|_\infty \leq q^{\delta+1} |vz|_\infty.
\]
We observe that $rv - fu \not=0$ because $(r,f) = (u,v) = 1$ and $f\not\approx v$. Writing $g = rv - fu$ and recalling $(\ref{Eqn4})$ and $(\ref{Eqn5})$, we deduce that 
\begin{align*}
P_{\CS}(x,y,\delta) \le \Pi(y, \delta).
\end{align*}
Combining this with \eqref{splitaway} and \eqref{Eqn6}, we obtain the desired result.  
\end{proof}

\textbf{Further notations:} For $ h \in \BF_q[t] \backslash \{ 0\}$ we put
$$
\CS_h : = \{  x\in \BF_q[t]_\infty : hx \in \CS\}.
$$
We note that 
$$
\CS_h \subset \mathcal{B}\big(0, Q_0- \deg h\big). 
$$
We shall require that the number of elements of $\CS_h$ in small sections of arithmetic progressions in $\BF_q[t]$ does not differ too much from the expected number. To measure the distribution of $\CS_h$ in  
sections of arithmetic progressions, we define the quantity 
\begin{equation}\label{Ah}
A_h(m; k, l) : = \max_{\substack{y \in \BF_q(t)_\infty\\ |y|_\infty \leq  Q_0-\deg h }}\mathrel\Big| \mathrel\Big\{ x \in \CS_h \cap \mathcal{B}(y,m)     : x \equiv l \bmod k     \mathrel\Big \}\mathrel\Big|,
\end{equation}
where 
\begin{equation} \label{lotcondis}
0\le m\leq Q_0- \deg h,\,  k \in \BF_q[t]\backslash \{0\},\ |k|_\infty\le \Delta^{-1/2}, \ l \in \BF_q[t], \ (k, l)=1.
\end{equation}

Next we express  $\Pi(y, \delta)$ in terms of  $A_h(m, k, l) $. This will lead us to the following estimate  for $P_{\CS}(u/v +z)$.

\begin{Lemma}\label{DA3} We have 
\begin{equation}\label{Eq8}
\begin{split}
P_{\CS}(u/v +z) \ll_q 1 + 
\sum\limits_{\substack{ h | v}}\sum\limits_{\substack{|g |_\infty\leq q^{Q_0}|h|_{\infty}^{-1}|vz|_\infty\\ ( g, v/h)=1 } }
A_h\bigg(Q_0+  \log_q\Big(\frac{\Delta}{|zh|_\infty}\Big);\frac{v}{h}, -u' g\bigg),
\end{split}
\end{equation}
where $u u' \equiv 1 \bmod v$.
\end{Lemma}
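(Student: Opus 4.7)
The plan is to start from Lemma~\ref{Bound4} with a carefully chosen $\delta$, reorganise the double sum defining $\Pi(y,\delta)$ by grouping terms according to $h:=\gcd(g,v)$, and then integrate out $y$ by a non-archimedean volume computation.

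First I would take $\delta$ to be the smallest integer in the range \eqref{Eqn3}, so that $q^\delta \asymp_q q^{Q_0}\Delta/|z|_\infty$. With this choice, $\delta-\deg h$ is, up to an additive $O(1)$ absorbed by the obvious monotonicity of $A_h(m;k,l)$ in $m$, the target value $Q_0+\log_q(\Delta/|zh|_\infty)$ appearing in \eqref{Eq8}.

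Next I would swap the summation in $\Pi(y,\delta)$ so that $g$ is outside. For each $g\neq 0$ set $h:=\gcd(g,v)$ and write $g=hg^{\ast}$ with $(g^{\ast},v/h)=1$. From $v\mid g+uf$ and $\gcd(u,v)=1$ one deduces $h\mid f$; writing $f=hx$ puts $x\in\CS_h$, and the congruence becomes $x\equiv -u'g^{\ast}\bmod v/h$, where $uu'\equiv 1\bmod v$. The constraint $f\in\mathcal{B}(y,\delta)$ rewrites as $x\in\mathcal{B}(y/h,\delta-\deg h)$. Since $|v/h|_\infty\leq |v|_\infty\leq \Delta^{-1/2}$ and $\gcd(v/h,u'g^{\ast})=1$ (because $\gcd(u',v)=\gcd(g^{\ast},v/h)=1$), the assumptions in \eqref{lotcondis} are satisfied, so the inner count over $x$ is bounded by $A_h(\delta-\deg h;v/h,-u'g^{\ast})$.

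Finally I would integrate over $y\in\mathcal{B}(0,Q_0)$ and interchange summation with integration. For fixed $h,g^{\ast}$ the only $y$-dependence is the requirement $hg^{\ast}\in J(y,\delta)$, which is equivalent to $y\in\mathcal{B}\bigl(hg^{\ast}/(zv),\delta+1\bigr)$, a ball of measure $q^{\delta+2}$. Using $\delta+1\leq Q_0$ and the ultrametric property, this ball meets $\mathcal{B}(0,Q_0)$ only if $|g^{\ast}|_\infty\leq q^{Q_0}|vz|_\infty/|h|_\infty$, which is precisely the range over $g$ in \eqref{Eq8}. The prefactor $q^{-\delta}$ from Lemma~\ref{Bound4} combines with $q^{\delta+2}$ to produce an $O_q(1)$ constant; renaming $g^{\ast}$ as $g$ yields the claimed bound. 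The main obstacle is the degree bookkeeping: verifying that the minimal admissible $\delta$ produces exactly the $m$-value $Q_0+\log_q(\Delta/|zh|_\infty)$ (via monotonicity of $A_h$), and that the ultrametric intersection pins down the stated range of~$g$. Once these alignments are secured, everything else is a clean swap of summation order and a standard volume estimate.
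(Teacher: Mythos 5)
Your proposal is correct and follows essentially the same route as the paper's proof: group by $h$ (you take $h=\gcd(g,v)$, the paper takes $h=\gcd(f,v)$, but these coincide since $g\equiv -uf\bmod v$ and $(u,v)=1$), pass to $\mathcal{S}_h$ via $f=hx$, $g=hg^{\ast}$, bound the inner count over $x$ by $A_h(\delta-\deg h;v/h,-u'g^{\ast})$, and integrate out $y$ using the ultrametric volume of $\mathcal{B}\bigl(g^{\ast}h/(vz),\delta+1\bigr)\cap\mathcal{B}(0,Q_0)$. The paper also takes the minimal admissible $\delta$ in \eqref{Eqn3} and implicitly uses monotonicity of $A_h$ in $m$ to land on the stated argument $Q_0+\log_q(\Delta/|zh|_\infty)$, so the two arguments are essentially identical.
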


\begin{proof} We split  $\Pi(y, \delta)$ into 
\[
\Pi(y, \delta)= \sum\limits_{h|v} \sum\limits_{\substack{f \in\CS\cap \mathcal{B}(y,\delta)\\ (f,v)=h}}  
\sum\limits_{\substack{g \in J(y, \delta)\\ g \equiv -uf \bmod v \\ g \neq 0} }1,
\]
where $h$ runs over a maximal set of mutually non-associate elements of  $\BF_q[t]\backslash \{0\}$, and $(f, v)= h$ means that $h$ is a greatest common divisor of $(f, v)$ (unique  up to associates). Writing $\tilde f : = f/h$ and $\tilde g : = g/h$, it follows that
\begin{align*}
\Pi(y, \delta)& \leq \sum\limits_{\substack{ h | v}}\sum\limits_{\substack{f/h\in\CS_h\cap \mathcal{B}\left(y/h,\, {\delta}- \deg h\right)\\ 
(f/h, v/h)=1}}  \sum\limits_{\substack{g/h\in J\left(y/h,\delta- \deg h\right)\\ g/h \equiv -uf/h \bmod v/h \\ g/h \neq 0} }1 \\
& =  \sum\limits_{\substack{ h | v}}\sum\limits_{\substack{\tilde f\in\CS_h\cap \mathcal{B}\left(y/h,\delta- \deg h\right)\\ 
(\tilde f, v/h)=1 }}  \sum\limits_{\substack{\tilde g\in J\left(y/h,\delta- \deg h\right)\\ \tilde g \equiv -u\tilde f \bmod v/h \\ 
\tilde g \neq 0} }1 \\
& =  \sum\limits_{\substack{ h | v}} \sum\limits_{\substack{\tilde g\in J\left(y/h,\delta- \deg h\right)\\ 
(\tilde g, v/h)=1 \\ \tilde g \neq 0} }
\sum\limits_{\substack{\tilde f\in\CS_h\cap \mathcal{B}\left(y/h,\delta- \deg h\right)\\ \tilde f \equiv -u'\tilde g \bmod v/h }} 1,
\end{align*}
where $uu'\equiv 1\bmod{v/h}$. 
Hence, by definition of $A_h(m; k, l)$ in (\ref{Ah}), we have 
\[
\Pi(y, \delta) \leq \sum\limits_{\substack{ h | v}} \sum\limits_{\substack{ g\in J\left(y/h,\delta- \deg h\right)\\ 
( g, v/h)=1 \\ \ g \neq 0} }A_h\left({\delta}- \deg h;\frac{v}{h}, -u' g\right).
\]
Integrating the last line over $y$ in the ball $\mathcal{B}(0, Q_0)$ and rearranging the order of summation and integration, we get
\begin{align*}
& \int\limits_{\mathcal{B}(0, Q_0)}\Pi(y, \delta)dy \\ \leq & \displaystyle \sum\limits_{\substack{ h | v}}\int\limits_{\mathcal{B}(0, Q_0)} 
\sum\limits_{\substack{ g\in J\left(y/h,\delta- 
\deg h\right)\\ ( g, v/h)=1 \\ \ g \neq 0} }A_h\Big({\delta}- \deg h;\frac{v}{h}, -u' g\Big)dy \\
\leq & \displaystyle \sum\limits_{\substack{ h | v}}\int\limits_{\mathcal{B}(0, Q_0)}\sum\limits_{\substack{\left|g -vyz/h\right|_\infty\leq 
q^{\delta+1}|vz|_\infty/|h|_\infty
\\ ( g, v/h)=1 \\ \ g \neq 0} }A_h\Big({\delta}- \deg h;\frac{v}{h}, -u' g\Big)dy \\
\leq & \sum\limits_{\substack{ h | v}}\sum\limits_{\substack{0<|g |_\infty\leq q^{Q_0}|vz|_\infty/|h|_\infty
\\ ( g, v/h)=1 } }A_h\Big({\delta}- \deg h;\frac{v}{h}, -u' g\Big) \times\\ &
\int\limits_{\mathcal{B}(0, Q_0)\cap \mathcal{B}\left(gh/(vz),\delta+1\right)} 1dy\\
\le & q^{\delta+2}\displaystyle \sum\limits_{\substack{ h | v}}\sum\limits_{\substack{0<|g |_\infty\leq q^{Q_0}|vz|_\infty/|h|_\infty
\\ ( g, v/h)=1} }A_h\Big({\delta}- \deg h;\frac{v}{h}, -u' g\Big). 
\end{align*}
Now choosing $\delta$ such that
\[
\frac{q^{Q_0-1}\Delta}{|z|_\infty}\le q^{\delta}< \frac{q^{Q_0}\Delta}{|z|_\infty}\le q^{\delta}
\]
 and using Lemma \ref{Bound4}, we obtain (\ref{Eq8}).
\end{proof}

If we assume the set $\CS_h$ to be evenly distributed in the residue classes $l \bmod k$, then if $\mathcal{B}(y, m) \subset \mathcal{B}\big(0, Q_0-\log_q |h|_\infty\big)$, the expected cardinality of the set 
\[
\mathrel\Big\{ x \in \CS_h \cap \mathcal{B}(y,m)     : x \equiv l \bmod k     \mathrel\Big \}
\]
is 
\[
\asymp \frac{|\CS_h|/|k|_\infty}{q^{Q_0}/|h|_\infty}\cdot q^m.
\]
This suggests to set a condition of the form 
\begin{equation}\label{Eq9}
A_h(m, k, l)\leq \mathrel\Big(1+ \frac{|\CS_h|/|k|_\infty}{q^{Q_0}/|h|_\infty}\cdot q^m\mathrel\Big)X,
\end{equation}
where $X\geq 1$ is thought to be small compared to $q^{Q_0}$ and $q^N$. Under the condition \eqref{Eq9},
we shall infer the following bound from Lemma \ref{DA3}. 

\begin{Lemma}\label{Bound5} Suppose the condition \eqref{Eq9} to  hold for all $h,k,l,m$ satisfying \eqref{lotcondis}.
Then 
\begin{equation}
P_{\CS}\left(\frac{u}{v}+z\right)\ll_q 1+q^{Q_0}X2^{\deg v}(|vz|_\infty+|\CS|\Delta).
\end{equation}
\end{Lemma}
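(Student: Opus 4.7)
The strategy is to substitute the hypothesis \eqref{Eq9} termwise into the conclusion of Lemma \ref{DA3} and then evaluate the resulting double sum. Starting from
$$P_{\CS}\left(\frac{u}{v}+z\right)\ll_q 1+\sum_{h\mid v}\sum_{\substack{|g|_\infty\le q^{Q_0}|vz|_\infty/|h|_\infty\\ (g,v/h)=1}}A_h\left(m_h;\,\frac{v}{h},\,-u'g\right),$$
where $m_h:=Q_0+\log_q(\Delta/|zh|_\infty)$, I would first verify that the parameters $k=v/h$ and $l=-u'g$ lie in the admissible range specified by \eqref{lotcondis}. The bound $|v/h|_\infty\le|v|_\infty\le\Delta^{-1/2}$ holds by the choice of $\tau$ in \eqref{taudef}, while coprimality $(v/h,\,-u'g)=1$ follows from $(g,v/h)=1$ together with $(u',v)=1$. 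A short computation yields $q^{m_h}=q^{Q_0}\Delta/(|z|_\infty|h|_\infty)$, so the ``expected-count'' ratio on the right-hand side of \eqref{Eq9} collapses to $|\CS_h||h|_\infty\Delta/(|v|_\infty|z|_\infty)$, giving
$$A_h(m_h;\,v/h,\,-u'g)\le X\left(1+\frac{|\CS_h||h|_\infty\Delta}{|v|_\infty|z|_\infty}\right).$$

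Next, since the inner sum over $g$ has at most $\ll q^{Q_0}|vz|_\infty/|h|_\infty$ terms, I obtain $\sum_g A_h(\ldots)\ll_q Xq^{Q_0}\bigl(|vz|_\infty/|h|_\infty+|\CS_h|\Delta\bigr)$. Finally, I carry out the outer sum over $h\mid v$ using two elementary divisor-sum bounds: first $\sum_{h\mid v}|h|_\infty^{-1}\le\tau(v)$ (since $|h|_\infty\ge 1$), and second, by reversing the order of summation,
$$\sum_{h\mid v}|\CS_h|=\sum_{f\in\CS}\tau(\gcd(f,v))\le\tau(v)|\CS|.$$
Invoking the divisor bound $\tau(v)\ll_q 2^{\deg v}$ from \eqref{tauest} then absorbs both contributions and yields the stated inequality.

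The only technical subtlety is ensuring that $m_h$ lies in the admissible range for \eqref{Eq9}, in particular that $m_h\ge 0$. For values of $h$ so large that $m_h$ would be negative, either $\CS_h$ is essentially empty or the ball $\mathcal{B}(\cdot,m_h)$ contains at most one element, so that the trivial bound $A_h\le 1$ applies; this edge case contributes at most $\tau(v)\ll_q 2^{\deg v}$, which is harmlessly absorbed into the leading ``$1$''. Apart from this bookkeeping, the proof is a clean substitution followed by elementary divisor-sum estimates.
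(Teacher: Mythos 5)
Your proof is correct and takes essentially the same route as the paper's: substitute \eqref{Eq9} termwise into Lemma~\ref{DA3}, evaluate the trivial count of the $g$-sum, and sum over $h\mid v$ using elementary divisor estimates together with \eqref{tauest}. The only imprecision is in your edge-case remark: if $m_h<0$, the trivial bound $A_h\le 1$ still leaves a $g$-sum of size $\ll_q q^{Q_0}|vz|_\infty/|h|_\infty$, so that contribution is absorbed into the main term $q^{Q_0}X 2^{\deg v}|vz|_\infty$ rather than into the leading $1$; in any event, since $|h|_\infty\le|v|_\infty$ and $|z|_\infty\le \Delta^{1/2}/|v|_\infty$ give $|zh|_\infty\le\Delta^{1/2}$, this edge case never arises as long as $\Delta\ge q^{-2Q_0}$, which holds in the paper's application.
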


\begin{proof} 
Equations \eqref{Eq8} and \eqref{Eq9} imply 
\begin{align*}
 \sum\limits_{\substack{ h | v}}& \sum\limits_{\substack{0<|g |_\infty\leq {q^{Q_0}}|vz|_\infty/{|h|_\infty}\\ ( g,v/h)=1 } }A_h\bigg(Q_0+  \log_q\Big(\frac{\Delta}{|zh|_\infty}\Big),\frac{v}{h}, -u' g\bigg)\\
&\leq\sum\limits_{\substack{ h | v}} \sum\limits_{\substack{0<|g |_\infty\leq {q^{Q_0}}|vz|_\infty/{|h|_\infty}\\ ( g, v/h)=1 } }\mathrel\bigg( 1 + \frac{|\CS_h|/|v/h|_\infty}{q^{Q_0}/|h|_\infty}
\cdot \frac{ q^{Q_0}\Delta}{|zh|_\infty}\mathrel\bigg)X\\
& \ll \sum\limits_{\substack{ h | v}}\mathrel\bigg( 1 + \frac{|\CS_h||{h}|_\infty\Delta}{|vz|_\infty}\mathrel\bigg)\cdot 
\frac{q^{Q_0}|vz|_\infty}{|h|_\infty}\cdot X \\
&=  \sum\limits_{\substack{ h | v}}\mathrel\bigg( \frac{|vz|_\infty}{|h|_\infty}+|\CS_h|\Delta\mathrel\bigg)q^{Q_0} X\\
&\le q^{Q_0}X\tau(v)(|vz|_\infty+|\CS|\Delta).
\end{align*}
This together with \eqref{tauest} and Lemma \ref{DA3} gives the desired result. 
\end{proof}

Upon choosing $\Delta:=q^{-N}$, Lemmas \ref{LS2}, \ref{DA2} and \ref{Bound5} imply the following general large sieve inequality for function 
fields which is an analogue of 
\cite[Theorem 2]{Bai} for the classical case. 

\begin{Theorem}\label{LSsparse} Suppose the condition \eqref{Eq9} to  hold for all $h,k,l,m$ satisfying \eqref{lotcondis}.
Then 
\begin{equation}
\begin{split}
& \sum\limits_{f \in \CS} \ \sum\limits_{\substack{r \bmod f\\ (r,f)=1}}\mathrel \bigg 
|\sum\limits_{\substack{g \in \BF_q[t]\\ \deg g\le N}} a_g  e\Big(g\cdot \frac{r}{f}\Big)\bigg |^2 \\ \ll_q & 
\Big(q^{N} + Q_0X2^{N/2}\big(q^{N/2}+|\CS|)\Big)\sum\limits_{\substack{g \in \BF_q[t]\\ \deg g\le N}}| a_g|^2.
\end{split}
\end{equation}
\end{Theorem}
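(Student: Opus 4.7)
The plan is to combine Lemmas \ref{LS2}, \ref{DA2}, and \ref{Bound5} with the specific choice $\Delta := q^{-N}$ indicated immediately before the theorem statement. First, exactly as at the start of Section \ref{DA} (but with the general set $\CS$ in place of the squares), I would rewrite the left-hand side of the inequality as $\sum_{r=1}^{R} |S(X_r)|^2$, where $S(x) := \sum_{\deg g \le N} a_g e(gx)$ and $X_1,\dots,X_R$ enumerate the Farey fractions $r/f$ with $f \in \CS$, $\deg r < \deg f$, and $(r,f)=1$.

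With $\Delta = q^{-N}$ we have $\tau = 1/\sqrt{\Delta} = q^{N/2}$, so the maxima in Lemma \ref{DA2} range over $v \in \BF_q[t]\setminus\{0\}$ with $\deg v \le N/2$ and over $z$ with $\Delta \le |z|_\infty \le q^{-N/2}/|v|_\infty$; in particular
$$|vz|_\infty \le q^{-N/2}, \qquad 2^{\deg v} \le 2^{N/2}.$$
The condition $|k|_\infty \le \Delta^{-1/2}$ appearing in \eqref{lotcondis} becomes $\deg k \le N/2$, which is automatic for the divisors $k = v/h$ of the $v$ arising from the Dirichlet-type approximation; hence the hypothesis \eqref{Eq9} of Lemma \ref{Bound5} is available for every pair $(u/v, z)$ we encounter.

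Next, applying Lemma \ref{Bound5} to each such pair gives
$$P_\CS\!\left(\frac{u}{v} + z\right) \ll_q 1 + q^{Q_0} X\, 2^{\deg v}\bigl(|vz|_\infty + |\CS|\Delta\bigr) \ll_q 1 + q^{Q_0} X\, 2^{N/2}\bigl(q^{-N/2} + |\CS|\, q^{-N}\bigr),$$
and taking the maximum in Lemma \ref{DA2} yields the same bound for $K'(\Delta)$.

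Finally, Lemma \ref{LS2} together with the trivial simplification $q^{N+1} + \Delta^{-1} \ll_q q^{N}$ produces
$$\sum_{r=1}^{R} |S(X_r)|^2 \ll_q K'(\Delta)\, q^{N}\, Z \ll_q \Bigl(q^{N} + q^{Q_0} X\, 2^{N/2}\bigl(q^{N/2} + |\CS|\bigr)\Bigr)\, Z,$$
which is the desired bound. There is no real obstacle: the proof is a direct specialisation of the machinery already established in Section \ref{DA}, and the only thing worth checking is the consistency of the numerical conditions ($\deg v \le N/2$, $|vz|_\infty \le q^{-N/2}$, $\deg k \le N/2$) under the chosen value of $\Delta$, which is straightforward.
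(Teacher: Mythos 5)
Your proof is correct and follows exactly the route the paper sketches: the paper itself simply asserts that "Upon choosing $\Delta:=q^{-N}$, Lemmas \ref{LS2}, \ref{DA2} and \ref{Bound5} imply the following," and you have carried out precisely that combination. One small point worth flagging: the bound you derive from Lemma \ref{Bound5}, namely $q^{Q_0}X\,2^{N/2}\bigl(q^{N/2}+|\CS|\bigr)$, has the factor $q^{Q_0}$, whereas the theorem as printed writes $Q_0$ without the base $q$. Since Lemma \ref{Bound5} produces $q^{Q_0}$ and no subsequent step removes the exponential, this is evidently a typo in the paper, and your version is the one that actually follows from the cited lemmas.
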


However, this corrected version of \cite[Corollary 5.2]{BaSi} (Claim 1 in section 3 of the present paper), is only a by-product in this paper. 
Next, we specialize $\CS$ to square moduli and derive the following estimate for $P(x)$, as defined in \eqref{Pdef}, from Lemma \ref{Bound5}.

\begin{Proposition} \label{secondPx} We have 
\begin{equation} \label{claimed}
P\left(\frac{u}{v}+z\right) \ll_q  1+2^{\mathcal{L}}q^{2Q}\left(\Delta^{1/2}+q^{Q}\Delta\right),
\end{equation}
where $\mathcal{L}$ is defined as in \eqref{caL}.
\end{Proposition}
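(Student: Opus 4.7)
The plan is to apply Lemma \ref{Bound5} with $\mathcal{S}=\{f^2:f\in\BF_q[t]\text{ monic},\ \deg f=Q\}$, so that $Q_0=2Q$ and $|\mathcal{S}|=q^Q$. Combined with the bounds $\deg v\le\log_q\tau=\mathcal{L}/2$ and $|vz|_\infty\le 1/\tau=\Delta^{1/2}$ that are in force by Lemma \ref{DA2}, the conclusion of Lemma \ref{Bound5} yields \eqref{claimed} at once, provided I can verify hypothesis \eqref{Eq9} with a parameter $X\ll 2^{\mathcal{L}/2}$; indeed one would then have $q^{Q_0}X\cdot 2^{\deg v}\ll q^{2Q}\cdot 2^{\mathcal{L}}$ and $|vz|_\infty+|\mathcal{S}|\Delta\le\Delta^{1/2}+q^Q\Delta$.

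To verify \eqref{Eq9} I first parameterize $\mathcal{S}_h$ for $h\mid v$. Writing $h=\prod_ip_i^{a_i}$, I set $h':=\prod_ip_i^{\lceil a_i/2\rceil}$ and $\hat h:=(h')^2/h\in\BF_q[t]$. The divisibility $h\mid f^2$ is equivalent to $h'\mid f$, hence
\[
\mathcal{S}_h=\bigl\{\hat h\,j^2:j\in\BF_q[t]\text{ monic},\ \deg j=Q-\deg h'\bigr\},
\]
so $|\mathcal{S}_h|=q^{Q-\deg h'}$ and every element of $\mathcal{S}_h$ has absolute value \emph{exactly} $q^{Q_0-\deg h}$. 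By the coprimality conditions used in Lemma \ref{DA3} we have $(l,k)=1$, so the congruence $\hat h\,j^2\equiv l\bmod k$ forces $(\hat h,k)=1$ and reduces to $j^2\equiv l\hat h^{-1}\bmod k$, which (as $\cha\BF_q>2$) has at most $2^{\omega(k)}$ solutions in $j\bmod k$. For the ball condition $\hat h\,j^2\in\mathcal{B}(y,m)$, the rigidity $|\hat h\,j^2|_\infty=q^{Q_0-\deg h}$ forces either $m=Q_0-\deg h$ (in which case the ball absorbs all of $\mathcal{S}_h$ and the AP bound $2^{\omega(k)}|\mathcal{S}_h|/|k|_\infty$ suffices) or $\deg y=Q_0-\deg h$. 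In the latter case I expand $j=t^d+a_{d-1}t^{d-1}+\cdots+a_0$ with $d=Q-\deg h'$ and match coefficients of $j^2$ against those of $y/\hat h$: since $2\ne 0$, the coefficients $a_{d-1},a_{d-2},\ldots$ are recursively determined down to the precision $q^{m-\deg\hat h}$ imposed by the ball, leaving at most $m-Q-\deg h'+\deg h+1$ low-order coefficients of $j$ free. Intersecting with the arithmetic progression modulo $k$ then gives
\[
A_h(m,k,l)\ll 2^{\omega(k)}\Bigl(1+q^{m+\deg h-\deg h'-Q}/|k|_\infty\Bigr),
\]
which is exactly \eqref{Eq9} with $X\ll 2^{\omega(k)}\le 2^{\deg k}\le 2^{\mathcal{L}/2}$ (using $|k|_\infty\le\Delta^{-1/2}$).

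The main technical obstacle is this coefficient-chase: one must verify that the Hensel/Taylor-style recursive determination of the coefficients of $j$ from a square root of $y/\hat h$ remains valid throughout the prescribed precision (which is where $\cha\BF_q\neq 2$ is essential), and that intersecting the resulting short initial segment of free coefficients with a fixed residue class modulo $k$ genuinely saves the full factor $1/|k|_\infty$ rather than producing spurious boundary contributions. Once this counting estimate is in hand, the rest of the proof is pure bookkeeping with the exponents and an invocation of Lemma \ref{Bound5}.
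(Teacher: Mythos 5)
Your proof takes exactly the same route as the paper: apply Lemma \ref{Bound5} with $Q_0=2Q$ and $\CS$ the set of squares of degree $2Q$, and verify \eqref{Eq9} with $X\ll 2^{\omega(k)}\le 2^{\mathcal{L}/2}$. Your $h'$ and $\hat h$ are precisely the paper's $F_h$ and $G_h$, your congruence reduction to $j^2\equiv l\hat h^{-1}\bmod k$ with at most $2^{\omega(k)}$ roots is the paper's $\delta_h(k,l)$, and the conclusion is assembled in the same way from $\deg v\le\mathcal{L}/2$ and $|vz|_\infty\le\Delta^{1/2}$. The one place you add something is the ball-intersection step, which the paper passes over in a single sentence (``it follows that condition \eqref{Eq9} holds true''): you actually carry out the coefficient-matching. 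Your worry about that step is unnecessary, though --- since $\cha\BF_q\ne 2$, the leading coefficient $c_d$ of $j$ is nonzero, so each successive coefficient of $j^2$ from $t^{2d}$ downward takes the form $2c_d c_i+(\text{already determined})$; this solves uniquely for $c_i$ one at a time down to the prescribed precision, so the recursion never stalls, and the number of $j$ in a fixed ball of radius $q^r$ meeting a fixed residue class mod $k$ is indeed $\le 1+q^{r+1}/|k|_\infty$ with no boundary loss. So your argument is correct and slightly more explicit than the paper's at this point.
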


\begin{proof} We shall apply Lemma \eqref{Bound5} with $Q_0=2Q$ and $\mathcal{S}$ the set of all squares $s$ of norm $q^{2Q}$. All we need
to do is to work out the size of $X$ in condition \eqref{Eq9}. This is completely parallel to the classical case, which has been worked out in 
\cite{Bai}. First, let
$$
h = \epsilon P_1^{v_1}\cdots P_n^{v_n}
$$
be the unique prime factorization of $h$ with $P_1,...,P_n\in \mathbb{F}_q[t]$ monic irreducible polynomials and 
$\epsilon\in \mathbb{F}_q^{\ast}$. 
For $i = 1, ..., n$ let
$$
u_i:= \begin{cases} v_i & \mbox{ if } v_i \mbox{ is even},\\
v_i + 1 & \mbox{ if } v_i \mbox{ is odd.}
\end{cases}
$$
Put
$$
F_h:= P_1^{u_1/2}\cdots P_n^{u_n/2}
$$
Then $R = R_1^2\in \mathcal{S}$ is divisible by $h$ if and only if $R_1$ is divisible by $F_h$. 
Thus
$$
\mathcal{S}_h = \left\{ R_2^2G_h \ :\ \deg R_2 = Q-\deg F_h \right\} \subset \left\{a \ :\ \deg a=2Q-\deg h\right\}
$$
where
$$
G_h:=\frac{F_h^2}{h}=\epsilon^{-1} P_1^{u_1-v_1}\cdots P_n^{u_1-v_1}.
$$
Hence,
$$
|S_h| \le q^{Q-\deg F_h+1}.
$$

Let $\delta_h(k, l)$ be the number of solutions $x$ mod $k$ to the congruence
\begin{equation} \label{co}
x^2G_h \equiv l \bmod k.
\end{equation}
Then it follows that condition \eqref{Eq9} holds true
for all positive $m \le  2Q-\deg h$ and
$$
X=\delta_h(k,l).
$$ 
Thus the remaining task is to bound $\delta_h(k,l)$.

If $(G_h,k)>1$, then $\delta_h(k, l) = 0$ since $k$ and $l$ are supposed to be coprime.
Therefore, we can assume that $(G_h,k) = 1$. Let $G$ mod $k$ be a multiplicative
inverse of $G_h \bmod{k}$, i.e. $GG_h \equiv 1 \bmod k$. Put $l^{\ast}:= Gl$. Then \eqref{co} is equivalent
to
\begin{equation*}
x^2 \equiv l^{\ast} \bmod{k}.
\end{equation*}
Taking into account that $(k,l^{\ast})= 1$, this congruence has at most two solutions if $k$ is a power
of an irreducible polynomial, where we recall that $q$ is not a power of $2$. From this it
follows using the Chinese remainder theorem that for all $k\in \mathbb{F}_q[t]$ we have 
$$
\delta_h(k, l) \le 2^{\omega(k)},
$$
where $\omega(k)$ is the number of distinct monic irreducible factors of $k$. For $|k|_\infty \le \Delta^{-1/2}$, we have
$$
\omega(k)\le \deg k\le \log_q\Delta^{-1/2}= \frac{\mathcal{L}}{2}.
$$
Therefore, \eqref{Ah} holds with 
$$
X:=2^{\mathcal{L}/2}.
$$
Now the claimed inequality follows from Lemma \ref{Bound5} upon recalling that
$$
|vz|_\infty \le \Delta^{1/2}
$$
and
$$
\deg v = \log_q |v|_\infty \le \log_q\Delta^{-1/2}=\frac{\mathcal{L}}{2}.
$$
\end{proof}

\section{Further transformation of $P(x)$}
In this section we transform $P(x)$ further by an application of Poisson summation. We then derive a third estimate for $P(x)$ 
which, in certain ranges, is better than the previously proved ones.

Throughout the following, we  suppose that $|z|_\infty \geq \Delta$. We further assume that $Q_0$ is even and set
$$
Q:=\frac{Q_0}{2}.
$$
Then applying Lemma \ref{Bound4} with  $x$ of the form  in \eqref{cond2},
$\delta$ a real  parameter satisfying \eqref{Eqn3} and 
$$
\CS : = \{ h^2 : h\in \BF_q[t],\ \deg h = Q\},
$$
we have 
\begin{equation} \label{Pbound}
P_{\CS}\left(\frac{u}{v} +z\right)\ll_q  1 +\frac{1}{q^{\delta}} \int\limits_{\mathcal{B}(0, 2Q)}\Pi(y, \delta)dy,
\end{equation}
where 
\begin{equation}
\Pi(y, \delta): = \sum\limits_{\substack{\deg h = Q\\ 
|h^2 -y|_\infty\leq  q^\delta}}  \sum\limits_{\substack{g \in J(y, \delta)\\ g \equiv -uh^2 \bmod v \\ g \neq 0} }1.
\end{equation}

Recall the notations in section 7. If $y\not\in \BF_q(t)_{\infty}^2$, then 
$$
\deg(x^2-y)=\max\{2\deg x,\deg y\}
$$
for every $x\in \BF_q(t)_{\infty}$. Hence, in this case,
$$
\deg(h^2-y)=2Q>\delta
$$
and therefore
$$
\Pi(y,\delta)=0.
$$

If $y\in \BF_q(t)_{\infty}^2$,
then the conditions $|h^2 -y|_\infty\leq  q^\delta$ and $\deg h=Q$ imply that $\deg y=2Q$ and 
$$
|h -\sqrt y|_\infty\leq  q^{\delta-Q} \mbox{ or }  |h +\sqrt y|_\infty\leq  q^{\delta-Q}.
$$
It follows that
\begin{equation*}
\begin{split}
\Pi(y, \delta) \ll \Pi_1(y,\delta)+\Pi_2(y,\delta),
\end{split}
\end{equation*}
where, for $i=1,2$, $\Pi_i(y,\delta)=0$ if $y\not\in \BF_q(t)_{\infty}^2$ and 
\begin{equation*}
\Pi_i(y, \delta) := \frac{q^{Q}}{2|\sqrt{y}|_{\infty}} \sum\limits_{|h +(-1)^i \sqrt y|_\infty\leq  q^{\delta-Q}} \ 
\sum\limits_{\substack{|g -yvz|_\infty\leq q^{\delta+1}|vz|_\infty\\ g \equiv -uh^2 \bmod v } }1 
\end{equation*}
if $y\in \BF_q(t)_{\infty}^2$. 

It follows that 
\begin{align}\label{Dsum1}
\Pi_i(y, \delta) \leq \frac{q^{Q}}{2|\sqrt{y}|_{\infty}} \cdot \Sigma_i,
\end{align}
where 
\begin{equation} \label{Sigma_i}
\Sigma_i : =   
\sum\limits_{h\in \BF_q[t]} \Phi_{1}\left( \frac{h+(-1)^i \sqrt y}{t^{\delta-Q+1}}\right)\sum\limits_{\substack{g \in \BF_q[t]\\ g \equiv -uh^2 \bmod v } }
\Phi_{1}\left( \frac{g-yvz}{vz t^{\delta+1}}\right)
\end{equation}
for $i=1,2$. Here $\Phi_1(x)$ is defined as in \eqref{Phi1def}.  

Applying the Poisson summation formula, Lemma \ref{Poisson},  with a linear change of variable to the sum over $g$, and 
using $\Phi_1=\hat{\Phi}_1$ (see Lemma \ref{Phi1}), we transform the inner-most sum in \eqref{Sigma_i} into  
\[
\sum\limits_{\substack{g \in \BF_q[t]\\ g \equiv -uh^2 \bmod v } }\Phi_{1}\left( \frac{g-yvz}{vz t^{\delta+1}}\right) = |z|_\infty q^{\delta+1} \sum\limits_{\ell\in \BF_q[t]}\Phi_1(zt^{\delta+1} \ell)e\left(yz\ell +\frac{uh^2\ell}{v}\right).
\]
It follows that 
\begin{equation}\label{Dsum2}
\begin{split}
\Sigma_i= & |z|_\infty q^{\delta+1}\sum\limits_{\ell\in \BF_q[t]}\Phi_1(zt^{\delta+1} \ell)e\left(yz\ell \right)\sum\limits_{s \bmod v^\ast} 
e\left(\frac{us^2\ell^\ast}{v^\ast}\right) \times\\ & \sum\limits_{\substack{r\in \BF_q[t]\\ r =s \bmod v^\ast}}
\Phi_{1}\left( \frac{r+(-1)^i\sqrt y}{t^{\delta-Q+1}}\right)
\end{split}
\end{equation}
for $i=1,2$,
where 
\begin{equation} \label{ast}
v^\ast : = v/(v,\ell) \quad \mbox{and} \quad  \ell^\ast : = \ell/(v,\ell).
\end{equation}
Again applying the Poisson summation formula, Lemma \ref{Poisson}, 
with a linear change of variable to the sum over $r$, and using $\Phi_1=\hat{\Phi}_1$, we transform the inner-most sum into  
\begin{equation}\label{Insum2}
\begin{split}
& \sum\limits_{\substack{r\in \BF_q[t]\\ r =s \bmod v^\ast}}\Phi_{1}\left( \frac{r+(-1)^i\sqrt y}{t^{\delta-Q+1}}\right)\\ = &
\frac{q^{\delta-Q +1}}{|v^\ast|_\infty} \sum\limits_{b\in \BF_q[t]}
\Phi_1\left(\frac{t^{\delta-Q+1}b}{v^\ast}\right)e\left(b\cdot \frac{-(s+(-1)^i\sqrt y)}{v^\ast}\right).
\end{split}
\end{equation}
Combining \eqref{Pbound}, \eqref{Dsum1}, \eqref{Dsum2} and \eqref{Insum2}, we obtain
\begin{equation} \label{Dsum5}
\begin{split}
& P_{\CS}\left(\frac{u}{v} +z\right) \ll_q 1+
|z|_\infty\cdot q^{\delta} \cdot \Bigg| \sum\limits_{\ell\in \BF_q[t]}
\frac{\Phi_1(zt^{\delta+1} \ell)}{|v^\ast|_\infty} \\ & 
\sum\limits_{b\in \BF_q[t]}\Phi_1\left(\frac{t^{\delta-Q+1}b}{v^\ast}\right)\cdot
G\big(u\ell^\ast,-b,v^\ast\big) \cdot \left(E\Big(z\ell,\frac{b}{v^\ast}\Big)+E\Big(z\ell,-\frac{b}{v^\ast}\Big)\right)\Bigg|
\end{split}
\end{equation}
for $i=1,2$, 
where the quadratic Gauss sum $G(\alpha,l;\beta)$ and the exponential integral $E(A,B)$ are defined as in \eqref{Gsum} and \eqref{expindef}, 
respectively. 

\section{Treatment of simple cases}
In this section, we estimate the contributions to \eqref{Dsum5} which can be treated easily.

Note that $v^\ast =1$ if $\ell =0$. It follows that the contribution of $\ell=0=b$ is bounded 
by
\begin{equation*} 
\ll_q q^{\delta+Q}|z|_\infty,
\end{equation*}
and the contribution of $\ell = 0$, $b \neq  0$ vanishes using Lemma \ref{expinte1}. 

Now we consider the case when $\ell \neq  0$ and $|b/(v^{\ast}z\ell)|_\infty > q^{Q}$. To apply the results on quadratic exponential
integrals in section \ref{QEIntegrals}, we set 
$$
A:=z\ell \quad \mbox{and} \quad B:=\pm \frac{b}{v^{\ast}}.
$$
Then from Lemma \ref{expinte2}, we deduce that
\begin{equation} \label{EABesti}
E(A,B)\ll |z\ell|_{\infty}^{-1/2} \cdot \left| \int\limits_{B(x,n)} e(\alpha y^2)dy \right|,
\end{equation}
where
$$
x=\frac{b}{2v^{\ast}\sqrt{z\ell\alpha}} 
$$
and 
\begin{equation} \label{udef}
n= Q+\deg \sqrt{\frac{z\ell}{\alpha}}. 
\end{equation}
We note that we are here in the case when $\deg x >n$. Now taking Lemma \ref{expinte3} into consideration, 
the integral on the right-hand side of \eqref{EABesti} is zero unless $n<-\deg x$, which is equivalent to 
$$
\deg b < -Q+\deg v^{\ast}+\deg \alpha.
$$
Hence, we possibly have a non-zero contribution only if 
$$
|b|_{\infty}\le q^{1-Q}|v^{\ast}|_{\infty},  
$$
in which case we use the trivial estimate 
$$
E(A,B) \ll q^{Q}
$$
(which is essentially the same as what we get when combining \eqref{EABesti} and Lemma \eqref{expinte3}).  
Using Lemma \eqref{Gsum5}, it follows that the total contribution in this case is bounded by 
\begin{equation*}
\ll_q \  q^{\delta}|z|_{\infty}\cdot \sum\limits_{\substack{\ell\not=0\\ |\ell|_\infty\le q^{-\delta}|z|_{\infty}^{-1}}} 
|v^{\ast}|_{\infty}^{-1/2}
\ \sum\limits_{|b|_\infty\le q^{1-Q}|v^{\ast}|_{\infty}} q^{Q}  
\ll_q  |v|_{\infty}^{1/2}.
\end{equation*}

Finally, we consider the case when $\ell \neq  0$ and $|b/(v^{\ast}z\ell)|_\infty \le q^{Q}$ and $n \leq -1$, where $n$ is defined as in
\eqref{udef}.
In this case we use the trivial estimate 
$$
E(A,B) \ll_q |z\ell|^{-1/2}_\infty.
$$
Also note that the condition $n \leq -1$ implies
\[
|\ell|_\infty \leq q^{-2Q-3}|z|^{-1}_{\infty}.
\]
Hence, the contribution of this case to \eqref{Dsum5} is bounded by 
\begin{equation}\begin{split}
\ll_q q^{\delta}|z|_\infty\cdot  \sum\limits_{\substack{\ell\not=0 \\ 
|\ell|_\infty\leq q^{-2Q-3}|z|^{-1}_{\infty}}} |v^\ast|^{-1/2}_\infty \sum\limits_{|b|_\infty\leq |v^\ast|_\infty q^{-\delta+Q-2}} 
|z\ell|^{-1/2}_\infty \ll_q |v|^{1/2}_\infty.
\end{split}
\end{equation}
Consequently, the total contribution to  \eqref{Dsum5} of the above three cases is
\begin{equation} \label{threecases}
\ll_q  q^{\delta+Q}|z|_\infty + |v|_{\infty}^{1/2}.
\end{equation}

\section{Treatment of critical case}
It remains to consider the critical case when $\ell \neq  0$ and $|b/(v^{\ast}z\ell)|_\infty \le q^{Q}$ and $n\geq 0$ 
in which we perform a precise evaluation of the Gauss sums and exponential integrals and then transform the resulting exponential sums further. 

As in the last section we set 
$$
A:=z\ell \quad \mbox{and} \quad B:=\pm \frac{b}{v^{\ast}}.
$$
Then from Lemma \ref{expinte2}, we deduce that
\begin{equation}
E(A,B)= q^{-\lceil(\deg z\ell)/2\rceil}\cdot e\left(\frac{-b^2}{4{v^\ast}^2 z\ell}\right)\cdot \int\limits_{\mathcal{B}(0,n )} e\left( \alpha y^2 \right) dy,
\end{equation}
where $n$ is defined as in \eqref{udef}.
In the case $n \geq 0$, Lemma \ref{expinte3} gives 
\begin{align*}
 \int\limits_{\mathcal{B}(0,n )} e\left( \alpha y^2 \right) dy = \begin{cases} 1  & \mbox{ if} \quad \epsilon =0 \\ q^{1/2} & 
 \mbox{ if} \quad \epsilon =1 \quad \mbox{and $c$ is a square} \\  -q^{1/2} & \mbox{ if} \quad \epsilon =1 \quad \mbox{and $c$ is 
 not a square}. 
\end{cases}
\end{align*}
Now we define
\begin{equation}
\sigma(\ell) := \begin{cases}1 &\mbox{ if  $z\ell$ has even degree or} \\ & \mbox{ $z\ell$ has odd degree and $c$ is a  square} \\
-1  &\mbox{ if $z\ell$ has odd degree and $c$ is not a  square}.
\end{cases}
\end{equation}
Then it follows that
\begin{equation*}
E(A,B) = e\left(\frac{-b^2}{4{v^\ast}^2 z\ell}\right)\cdot {|z\ell|_\infty^{-1/2}}\cdot \sigma(\ell).
\end{equation*}
\noindent
We note that the condition $n \geq 0$ is equivalent to
$$
\deg z\ell \geq -2-2Q.
$$
Hence, the contribution of this case to the right-hand side of \eqref{Dsum5} is bounded by
\begin{equation}\begin{split}
\ll & q^{\delta}|z|_{\infty}\cdot 
\sum\limits_{\substack{\ell\not=0\\ |\ell|_\infty \leq q^{-\delta}|z|_\infty^{-1}}}  |v^\ast|_\infty^{-1}\cdot  \Bigg|
\sum\limits_{|b|_\infty\leq q^{Q}|v^\ast z\ell|_\infty} G\big(u\ell^\ast,-b,v^\ast\big) \times\\ 
& e\left(\frac{-b^2}{4{v^\ast}^2 z\ell}\right)\cdot |z\ell|_\infty^{-1/2}\cdot\sigma(\ell) \Bigg|.
\end{split}
\end{equation} 
Using Lemmas \ref{Gsum1} and \ref{Gsum5}, we bound the above double sum by 
\begin{equation} \label{hardcase} \begin{split}
\ll q^{\delta}|z|_{\infty}^{1/2}  \cdot
\sum\limits_{\substack{\ell\not=0\\ |\ell|_\infty \leq q^{-\delta}|z|_\infty^{-1}}} |v^\ast \ell|_{\infty}^{-1/2}\cdot 
\Bigg|
\sum\limits_{\deg b \leq M} e\left(Vb^2\right)\Bigg|,
\end{split}
\end{equation}
where we set
\begin{equation}
V:=\frac{\ol{u\ell^\ast}}{4v^\ast}
+\frac{1}{4{v^\ast}^2 z\ell}
\end{equation}
and 
\begin{equation} \label{Mdef}
M:=Q+\deg v^{\ast}+\deg z + \deg \ell,
\end{equation}
keeping in mind that $V$ and $M$ depend on $\ell$. We note that 
\begin{equation} \label{Mbound}
M\le Q+\mathcal{L}, 
\end{equation}
where $\mathcal{L}$ is defined as in \eqref{caL}.

\section{Simplification of the quadratic exponential sum}
In the following, we simplify the exponential sum over $b$ in \eqref{hardcase}. First we rewrite $V$ in a more suitable form. Set
\begin{equation} \label{fkast}
f_k^\ast = \frac{1}{zv^\ast v}.
\end{equation}
From \eqref{cond2}  and the fact that $v^\ast | v$, we have $f_k^\ast \in \BF_q[t]\setminus \{0\}$. We further assume that 
\[
\ol u \equiv -a  \bmod v^\ast, \quad \deg a < \deg v^\ast.
\]
Using the reciprocity relation
\[
\frac{\ol{\ell^\ast}}{v^\ast} \equiv - \frac{\ol{v^\ast}}{\ell^\ast} +\frac{1}{\ell^\ast v^\ast} \bmod{1}
\]
for Kloosterman fractions and the relation $v^\ast \ell = \ell^\ast v$,
we deduce that
$$
V\equiv \frac{a\overline{v^\ast} +f_k^\ast}{4\ell^\ast}-\frac{a}{4v^\ast \ell^\ast} \bmod{1}.
$$

Next, we remove the term $a/(4v^\ast \ell^\ast)$ using summation by parts. 
We arrange the $b$'s in question into a sequence $b_1,b_2,...,b_N$ satisfying 
$$
b_1=0 \quad \mbox{ and } \quad  
|b_{i+1}-b_{i}|_{\infty}=q^{\ord_q(i)} \mbox{ for } i=1,...,N-1,
$$
where $N:=q^{M+1}$ and 
$$
\ord_q(i)=\max\limits_{q^{\alpha}|i} \alpha. 
$$ 
Now we write
\begin{equation*}
\begin{split}
& \sum\limits_{\deg b \leq M} e\left(Vb^2\right) = 
\sum\limits_{i=1}^{N} e\left(-\frac{a}{4v^\ast \ell^\ast}\cdot b_i^2\right) \cdot e\left(\frac{a\overline{v^\ast} +f_k^\ast}{4\ell^\ast}
\cdot b_i^2\right)\\
= & e\left(-\frac{a}{4v^\ast \ell^\ast}\cdot b_{N}^2\right) \cdot 
\sum\limits_{j=1}^{N} e\left(\frac{a\overline{v^\ast} +f_k^\ast}{4\ell^\ast}\right)- \\
& \sum\limits_{i=1}^{N-1} \left(e\left(-\frac{a}{4v^\ast \ell^\ast}\cdot b_{i+1}^2\right)-
e\left(-\frac{a}{4v^\ast \ell^\ast}\cdot b_{i}^2\right)\right) 
\cdot \sum\limits_{j=1}^{i} e\left(\frac{a\overline{v^\ast} +f_k^\ast}{4\ell^\ast}
\cdot b_j^2\right).
\end{split}
\end{equation*}
We bound the differences of exponentials above by 
\begin{equation*}
\begin{split}
& \left|e\left(-\frac{a}{4v^\ast \ell^\ast}\cdot b_{i+1}^2\right)-
e\left(-\frac{a}{4v^\ast \ell^\ast}\cdot b_{i}^2\right)\right| = 
\left|e\left(-\frac{a}{4v^\ast \ell^\ast}\cdot (b_{i+1}^2-b_i^2)\right)-1 \right|\\
\ll & \left|\frac{a}{4v^\ast \ell^\ast}\cdot (b_{i+1}^2-b_i^2)\right|_{\infty}\le q^M |\ell^{\ast}|^{-1}_{\infty} |b_{i+1}-b_i|_{\infty}. 
\end{split}
\end{equation*}
We calculate that
$$
\sum\limits_{i=1}^{N-1} |b_{i+1}-b_i|_{\infty} = Mq^{M+1}. 
$$
Hence, we deduce that
\begin{equation} \label{butzelbatz}
\sum\limits_{\deg b \leq M} e\left(Vb^2\right) \ll \left(1+Mq^{2M} |\ell^{\ast}|^{-1}\right)
\sup_{1\le i\le N} \left|\sum\limits_{j=1}^{i} e\left(Wb_j^2\right)\right|,
\end{equation}
where we set
$$
W:=\frac{a\overline{v^\ast} +f_k^\ast}{4\ell^\ast}
$$
for convenience. 

\section{Application of Weyl shift}
It remains to estimate the partial sums 
$$
\sum\limits_{j=1}^{i} e\left(Wb_j^2\right).
$$
Taking the ordering of the $b_j$'s into account, we split this sum into $O(Mq)$ subsums of the form
$$
\sum\limits_{\substack{|b-B|_{\infty}\le q^m}} e\left(Wb\right),
$$
where $-1\le m\le M-1$ and $q^{m+1}\le |B|_\infty\le q^M$.
 
To bound the above subsums, we apply a Weyl shift to the modulus square, getting
\begin{align*}
\Bigg|\sum_{\substack{|b-B|_{\infty}\le q^m}} e\left(Wb^2\right) \Bigg|^2 = & 
\sum_{\substack{b,\tilde{b}\\ |b-B|_{\infty}\le q^m\\ |\tilde{b}-B|_{\infty}\le q^m}} e\left(W(\tilde b-b)(\tilde b+b)\right)\\
= & 
\sum\limits_{|h|_{\infty} \leq q^m}  \ \sum_{|c-B|_{\infty}\le q^m} e\left(Whc\right),
\end{align*}
where we write $c=\tilde{b}+b$ and $h=\tilde{b}-b$. 
Defining $\Phi_1$ as in \eqref{Phi1def} and using the Poisson summation formula (Lemma \ref{Poisson}) together with $\Phi_1=\hat{\Phi}_1$ (Lemma
\ref{Phi1}), we have 
\begin{align*}
 \sum\limits_{|c-B|_{\infty}\le q^m}  e\left(Whc\right) = & e(WhB) \sum_{\tilde{c}\in\BF_q[t]}e\left(Wh\tilde{c}\right)\Phi_1(t^{-m-1}\tilde{c})
 \\
= & q^{m+1}e(WhB)\sum_{x\in\BF_q[t]}  \Phi_1\big(t^{m+1}(x-Wh)\big)\\
\ll &  \begin{cases}
q^{m+1} &  \text{ if}\, \, \|Wh\| \leq q^{-m-2} \\
0 & \text{ otherwise}.
\end{cases}
\end{align*}

Taking square root, we deduce that
\begin{equation} \label{squaresum}
\Bigg|\sum_{\substack{|b-B|_{\infty}\le q^m}} e\left(Wb^2\right) \Bigg|
\ll \Bigg(q^{m+1}\sum_{\substack{|h|_{\infty}\le q^m\\ \|Wh\|\le q^{-m-2}}} 1\Bigg)^{1/2}.
\end{equation}
We observe that the sum on the right-hand is bounded by
\begin{equation} \label{squaresum2}
q^{m+1} \sum_{\substack{|h|_{\infty}\le q^m\\ \|Wh\|\le q^{-m-2}}} 1 \le \sum_{|h|_{\infty}\le q^M} \min\{q^M,\|Wh\|^{-1}\}.
\end{equation}

Using \eqref{Mbound}, \eqref{butzelbatz}, \eqref{squaresum} and \eqref{squaresum2}, we now bound
\eqref{hardcase} by
\begin{equation} \label{hardcase2} 
\begin{split}
\ll_q & (Q+\mathcal{L})^2 q^{\delta}|z|_{\infty}^{1/2}  \cdot
\sum\limits_{\substack{\ell\not=0\\ |\ell|_\infty \leq q^{-\delta}|z|_\infty^{-1}}} |v^\ast \ell|_{\infty}^{-1/2}  
\left(1+q^{2M} |\ell^{\ast}|^{-1}\right)\times \\
& \Bigg(\sum_{|h|_{\infty}\le q^M} \min\{q^M,\|Wh\|^{-1}\}\Bigg)^{1/2}.
\end{split}
\end{equation}
Recalling the definitions of $v^{\ast}$ and $\ell^{\ast}$ in \eqref{ast}, the above is dominated by
\begin{equation} \label{hardcase1}
\begin{split}
\ll_q & (Q+\mathcal{L})^2 q^{\delta}|z|_{\infty}^{1/2}  \cdot \sum\limits_{d|v} \Sigma_d,
\end{split}
\end{equation}
where 
$$
\Sigma_d:=\sum\limits_{\substack{(\ell^{\ast},v^{\ast})=1\\
0<|\ell^{\ast}|_\infty \leq q^{-\delta}|zd|_\infty^{-1}}} 
|v\ell^{\ast}|_{\infty}^{-1/2}  
\left(1+q^{2M} |\ell^{\ast}|^{-1}\right)\Bigg(\sum_{|h|_{\infty}\le q^M} \min\{q^M,\|Wh\|^{-1}\}\Bigg)^{1/2}
$$
and
\begin{equation} \label{Mrewrite}
M=Q+\deg v+\deg z +\deg \ell^{\ast}.
\end{equation}
Applying the Cauchy-Schwarz inequality, we bound $\Sigma_d^2$ by
\begin{equation} \label{CSch} 
\begin{split}
\Sigma_d^2 \le & \sum\limits_{\substack{(\ell^{\ast},v^{\ast})=1\\
0<|\ell^{\ast}|_\infty \leq q^{-\delta}|zd|_\infty^{-1}}} 
|v\ell^{\ast}|_{\infty}^{-1}  
\left(1+q^{4M} |\ell^{\ast}|^{-2}\right)\times \\
& \sum\limits_{\substack{(\ell^{\ast},v^{\ast})=1\\
0<|\ell^{\ast}|_\infty \leq q^{-\delta}|zd|_\infty^{-1}}}
\sum_{|h|_{\infty}\le q^M} \min\{q^M,\|Wh\|^{-1}\}\\
\ll_q & \left((Q+\mathcal{L})|v|_\infty^{-1}+q^{4Q-2\delta}|v|_{\infty}^3|z|_{\infty}^2|d|_{\infty}^{-2}\right)\times\\
& \sum\limits_{\substack{(\ell^{\ast},v^{\ast})=1\\
0<|\ell^{\ast}|_\infty \leq q^{-\delta}|zd|_\infty^{-1}}}
\sum_{|h|_{\infty}\le q^M} \min\{q^M,\|Wh\|^{-1}\}.
\end{split}
\end{equation}

\section{Final Count}
We bound the double sum over $\ell^{\ast}$ and $h$ on the right-hand side of \eqref{CSch} in the form
\begin{equation} \label{div}
\begin{split}
& \sum\limits_{\substack{(\ell^{\ast},v^{\ast})=1\\
0<|\ell^{\ast}|_\infty \leq q^{-\delta}|zd|_\infty^{-1}}}
\sum_{|h|_{\infty}\le q^M} \min\{q^M,\|Wh\|^{-1}\}\\ \le & \sum\limits_{\substack{(\ell^{\ast},v^{\ast})=1\\
0<|\ell^{\ast}|_\infty \leq q^{-\delta}|zd|_\infty^{-1}}} \ \sum\limits_{\substack{|h|_{\infty}\le q^M\\ 
(a\overline{v^\ast} +f_k^\ast)h \equiv 0 \bmod{\ell^{\ast}}}} q^M + \\ & 
\sum\limits_{0\le j\le \mathcal{L}} \sum\limits_{\deg \alpha \le j}  
\sum\limits_{\substack{(\ell^{\ast},v^{\ast})=1\\
0<|\ell^{\ast}|_\infty \leq q^{-\delta}|zd|_\infty^{-1}}}\ \sum\limits_{\substack{|h|_{\infty}\le q^M\\ 
(a\overline{v^\ast} +f_k^\ast)h \equiv \alpha \bmod{\ell^{\ast}}}} |\ell^{\ast}|_{\infty}q^{-j}\\
\ll_q & \sum\limits_{\substack{(\ell^{\ast},v^{\ast})=1\\
0<|\ell^{\ast}|_\infty \leq q^{-\delta}|zd|_\infty^{-1}}} \ \sum\limits_{\substack{|h|_{\infty}\le q^M\\ 
h \equiv 0 \bmod{\ell^{\ast}}}} q^M + 
\sum\limits_{0\le j\le \mathcal{L}} q^{-j} \times\\ 
& \sum\limits_{\deg \alpha \le j}  
\sum\limits_{\substack{(\ell^{\ast},v^{\ast})=1\\
0<|\ell^{\ast}|_\infty \leq q^{-\delta}|zd|_\infty^{-1}}}\ \sum\limits_{\substack{|h|_{\infty}\le q^M\\ 
(f_k^\ast v^{\ast}+a)h \equiv \alpha v^{\ast}\bmod{\ell^{\ast}}}} |\ell^{\ast}|_{\infty},
\end{split}
\end{equation}
where we use $(v^{\ast},\ell^{\ast})=1$ and $(f_k^{\ast}v^{\ast}+a, v^{\ast})=1$. Further, 
\begin{equation} \label{firstsum}
\begin{split}
 \sum\limits_{\substack{(\ell^{\ast},v^{\ast})=1\\
0<|\ell^{\ast}|_\infty \leq q^{-\delta}|zd|_\infty^{-1}}} \ \sum\limits_{\substack{|h|_{\infty}\le q^M\\ 
h \equiv 0 \bmod{\ell^{\ast}}}} q^M \ll & \sum\limits_{\substack{(\ell^{\ast},v^{\ast})=1\\
0<|\ell^{\ast}|_\infty \leq q^{-\delta}|zd|_\infty^{-1}}} q^M\left(1+\frac{q^M}{|\ell^{\ast}|_\infty}\right)\\
\ll & q^{Q-2\delta}|v|_{\infty}|z|_{\infty}^{-1}|d|_{\infty}^{-2}+q^{2Q-2\delta}|v|_{\infty}^2|d|_{\infty}^{-2},
\end{split}
\end{equation}
where we use \eqref{Mrewrite}. 
Finally, we bound the triple sum over $\alpha$, $\ell^{\ast}$ and $h$ in the last line of \eqref{div}. We consider two
cases: If $(f_k^\ast v^{\ast}+a)h =\alpha v^{\ast}$, then the congruence 
$(f_k^\ast v^{\ast}+a)h \equiv \alpha v^{\ast} \bmod{\ell^{\ast}}$ 
is satisfied for every $\ell^{\ast}$. If $(f_k^\ast v^{\ast}+a)h \not= \alpha v^{\ast}$, then the above
congruence is equivalent to $\ell^{\ast}| (f_k^\ast v^{\ast}h+ah -\alpha v^{\ast})$. Hence,
\begin{equation} \label{secondsum}
\begin{split}
& \sum\limits_{\deg \alpha \le j}  
\sum\limits_{\substack{(\ell^{\ast},v^{\ast})=1\\
0<|\ell^{\ast}|_\infty \leq q^{-\delta}|zd|_\infty^{-1}}}\ \sum\limits_{\substack{|h|_{\infty}\le q^M\\ 
(f_k^\ast v^{\ast}+a)h \equiv \alpha v^{\ast}\bmod{\ell^{\ast}}}} |\ell^{\ast}|_{\infty}\\
= & \sum\limits_{\deg \alpha \le j}\
\sum\limits_{\substack{\deg h\le Q+\deg v -\delta-\deg d \\ (f_k^\ast v^{\ast}+a)h = \alpha v^{\ast}}}   
\sum\limits_{\substack{(\ell^{\ast},v^{\ast})=1\\
0<|\ell^{\ast}|_\infty \leq q^{-\delta}|zd|_\infty^{-1}}} |\ell^{\ast}|_{\infty}+\\
& \sum\limits_{\deg \alpha \le j}\
\sum\limits_{\substack{\deg h\le Q+\deg v -\delta-\deg d \\ (f_k^\ast v^{\ast}+a)h \not= \alpha v^{\ast}}}   
\sum\limits_{\substack{(\ell^{\ast},v^{\ast})=1\\
0<|\ell^{\ast}|_\infty \leq q^{-\delta}|zd|_\infty^{-1}\\
\ell^{\ast}| (f_k^\ast v^{\ast}h+ah -\alpha v^{\ast})}} |\ell^{\ast}|_{\infty}\\
\ll_q  & q^{-2\delta}|zd|_\infty^{-2} \sum\limits_{\deg \alpha \le j}\
\sum\limits_{\substack{\deg h\le Q+\deg v -\delta-\deg d \\ (f_k^\ast v^{\ast}+a)h = \alpha v^{\ast}}} 1 +\\
& 2^{Q+\mathcal{L}}q^{j+Q-2\delta}|v|_{\infty}|z|_\infty^{-1}|d|_{\infty}^{-2},
\end{split}
\end{equation}
where we use the estimates
$$
\sum\limits_{\substack{|\ell^{\ast}|_{\infty}\le X\\ 
\ell^{\ast}|n}} |\ell^{\ast}|_{\infty} \le X\sum\limits_{ 
\ell^{\ast}|n} 1 \ll_q 2^{\deg n}X 
$$
and 
$$
\deg (f_k^\ast v^{\ast}h+ah -\alpha v^{\ast})\le Q+\mathcal{L}.
$$
We further observe that
$$
\sum\limits_{\deg \alpha \le j}\
\sum\limits_{\substack{\deg h\le Q+\deg v -\delta-\deg d \\ (f_k^\ast v^{\ast}+a)h = 
\alpha v^{\ast}}} 1 \le q^{j+1}|f_k^\ast v^{\ast}+a|_{\infty}^{-1}
$$
because $(f_k^\ast v^{\ast}+a, v^{\ast})=1$. By \eqref{fkast}, we have
$$
|f_k^{\ast}v^{\ast}+a|_{\infty}= |f_k^{\ast}v^{\ast}|_{\infty} = |vz|_{\infty}^{-1}. 
$$
It follows that 
\begin{equation} \label{bart}
\sum\limits_{\deg \alpha \le j}\
\sum\limits_{\substack{\deg h\le Q+\deg v -\delta-\deg d \\ (f_k^\ast v^{\ast}+a)h = \alpha v^{\ast}}} 1 
\le q^{j+1}|vz|_{\infty}.
\end{equation}

Combining \eqref{div}, \eqref{firstsum}, \eqref{secondsum} and \eqref{bart}, we obtain
\begin{equation} \label{lisa}
\begin{split}
& \sum\limits_{\substack{(\ell^{\ast},v^{\ast})=1\\
0<|\ell^{\ast}|_\infty \leq q^{-\delta}|zd|_\infty^{-1}}}
\sum_{|h|_{\infty}\le q^M} \min\{q^M,\|Wh\|^{-1}\}\\ \le & 
q^{2Q-2\delta}|v|_{\infty}^2|d|_{\infty}^{-2} 
+ 2^{Q+\mathcal{L}}\mathcal{L}q^{Q-2\delta}|v|_{\infty}|z|_{\infty}^{-1}|d|_\infty^{-2}.
\end{split}
\end{equation}
Plugging this into \eqref{CSch} gives
\begin{equation} \label{Sigmad} 
\begin{split}
& \Sigma_d^2 \ll_q \left((Q+\mathcal{L})|v|_\infty^{-1}+q^{4Q-2\delta}|v|_{\infty}^3|z|_{\infty}^2|d|_{\infty}^{-2}\right)\times\\
& 
\left(q^{2Q-2\delta}|v|_{\infty}^2|d|_{\infty}^{-2} 
+ 2^{Q+\mathcal{L}}\mathcal{L}q^{Q-2\delta}|v|_{\infty}|z|_{\infty}^{-1}|d|_\infty^{-2}\right).
\end{split}
\end{equation}
Hence, the expression in \eqref{hardcase1} is bounded by 
\begin{equation} \label{hardcase3}
\begin{split}
\ll_q & (Q+\mathcal{L})^2 q^{\delta}|z|_{\infty}^{1/2}  \mathcal{L} 
\left((Q+\mathcal{L})|v|_\infty^{-1}+q^{4Q-2\delta}|v|_{\infty}^3|z|_{\infty}^2\right)^{1/2}\times\\
&  \left(q^{2Q-2\delta}|v|_{\infty}^2 
+ 2^{Q+\mathcal{L}}\mathcal{L}q^{Q-2\delta}|v|_{\infty}|z|_{\infty}^{-1}\right)^{1/2},
\end{split}
\end{equation}
where we use the bound 
$$
\sum\limits_{d|v} \frac{1}{|d|_\infty} \le \sum\limits_{0<|d|_{\infty}\le |v|_{\infty}} \frac{1}{|d|_\infty} \ll_q \log_q |v|_{\infty} \le 
\mathcal{L}. 
$$

\section{Third estimate for $P(x)$}
Combining \eqref{threecases} (total contribution to $P(x)$ of the simple cases) and \eqref{hardcase3} (total contribution to $P(x)$ of the critical case), and simplifying, we obtain the estimate
\begin{equation*}
\begin{split}
P\left(\frac{u}{v}+z\right)
\ll_q  & q^{\delta+Q}|z|_\infty + |v|_{\infty}^{1/2}+ (Q+\mathcal{L})^4 q^{\delta}|z|_{\infty}^{1/2} \times\\ &  
\left(|v|_\infty^{-1}+q^{4Q-2\delta}|v|_{\infty}^3|z|_{\infty}^2\right)^{1/2}\times\\
&  \left(q^{2Q-2\delta}|v|_{\infty}^2 
+ 2^{Q+\mathcal{L}}q^{Q-2\delta}|v|_{\infty}|z|_{\infty}^{-1}\right)^{1/2}.
\end{split}
\end{equation*}
Choosing $q^{\delta}$ as small as possible in \eqref{Eqn3}, i.e.
$$
\frac{q^{2Q-1}\Delta}{|z|_{\infty}}=\frac{q^{Q_0-1}\Delta}{|z|_{\infty}}\le q^{\delta}<
\frac{q^{Q_0}\Delta}{|z|_{\infty}}=\frac{q^{2Q}\Delta}{|z|_{\infty}}
$$
and using 
$$
|vz|_{\infty}\le \Delta^{1/2}
$$
and the (rough) bound
$$
(Q+\mathcal{L})^4\ll 2^{(Q+\mathcal{L})/2},
$$
we arrive at the following estimate for $P(x)$.

\begin{Proposition} \label{thirdPx} We have 
\begin{equation*}
\begin{split}
P\left(\frac{u}{v}+z\right)
\ll_q  & q^{3Q}\Delta+|v|_{\infty}^{1/2}+(Q+\mathcal{L})^4q^{Q}\Delta^{1/4}
+ 2^{Q+\mathcal{L}}q^{3Q/2}\Delta^{1/2}.
\end{split}
\end{equation*}
\end{Proposition}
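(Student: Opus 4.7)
The plan is to apply Lemma \ref{Bound4} with $\CS=\{h^2:\deg h=Q\}$ and $Q_0=2Q$, which reduces the bound for $P(u/v+z)$ to controlling $q^{-\delta}\int_{\mathcal{B}(0,2Q)}\Pi(y,\delta)\,dy$, and to then execute the full Poisson--Gauss sum--Weyl analysis sketched in Sections 11--15 of the excerpt. I would fix $\delta$ at the end of the argument so that $q^{\delta}$ is the smallest admissible value in \eqref{Eqn3}, namely $q^{\delta}\asymp q^{2Q}\Delta/|z|_\infty$, and use the standing assumptions $|z|_\infty\ge \Delta$ and $|vz|_\infty\le \Delta^{1/2}$ throughout to simplify the final expression.

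First I would observe that $\Pi(y,\delta)=0$ unless $y\in \BF_q(t)_\infty^2$ (since $y\notin\BF_q(t)_\infty^2$ forces $\deg(h^2-y)=2Q>\delta$), and for such $y$ the condition $|h^2-y|_\infty\le q^\delta$ restricts $h$ to one of two balls around $\pm\sqrt y$. This yields the decomposition $\Pi\ll \Pi_1+\Pi_2$, and reduces the proof to estimating the weighted double sums $\Sigma_i$ from \eqref{Sigma_i}. I would then apply Poisson summation (Lemma \ref{Poisson}, invoking Lemma \ref{Phi1}) twice: once to the sum over $g$ on the arithmetic progression mod $v$, which introduces the Kloosterman-type splitting $v^*=v/(v,\ell)$, $\ell^*=\ell/(v,\ell)$, and a second time to the sum over $r$ in a residue class mod $v^*$. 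The outcome is an expression involving the quadratic Gauss sum $G(u\ell^*,-b;v^*)$ evaluated via Lemmas \ref{Gsum1}--\ref{Gsum5}, and the quadratic exponential integral $E(z\ell,\pm b/v^*)$ evaluated via Lemmas \ref{expinte2}--\ref{expinte3}; this is precisely formula \eqref{Dsum5}.

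Next I would split the resulting $(\ell,b)$-sum into simple cases ($\ell=0$; or $\ell\neq 0$ with $|b/(v^*z\ell)|_\infty>q^Q$; or $n\le-1$), which by Lemma \ref{expinte3} either vanish or are handled by the trivial bound $E(A,B)\ll |z\ell|_\infty^{-1/2}$ and $|G|=|v^*|_\infty^{1/2}$, and contribute at most $q^{\delta+Q}|z|_\infty+|v|_\infty^{1/2}$, as in \eqref{threecases}. The critical case $\ell\neq 0$, $|b/(v^*z\ell)|_\infty\le q^Q$, $n\ge 0$ produces an exact formula for $E$ and reduces, via the reciprocity for Kloosterman fractions and summation by parts (absorbing the linear piece $-a/(4v^*\ell^*)$), to short quadratic exponential sums $\sum e(Wb^2)$ with $W=(a\overline{v^*}+f_k^*)/(4\ell^*)$.

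The main obstacle is bounding these quadratic sums uniformly in $\ell^*$. The plan is to apply a Weyl shift to $|\sum e(Wb^2)|^2$, reducing to linear sums that detect the condition $\|Wh\|\le q^{-m-2}$, and then to count pairs $(\ell^*,h)$ with $|\ell^*|_\infty\le q^{-\delta}|zd|_\infty^{-1}$, $|h|_\infty\le q^M$ and $(a\overline{v^*}+f_k^*)h\equiv\alpha v^*\pmod{\ell^*}$ for each $\alpha$ with $\deg\alpha\le j\le\mathcal{L}$. Splitting this count depending on whether $(f_k^*v^*+a)h=\alpha v^*$ or not, using $(f_k^*v^*+a,v^*)=1$, $|f_k^*v^*+a|_\infty=|vz|_\infty^{-1}$, and the divisor bound $\tau(n)\ll 2^{\deg n}$, will give \eqref{lisa}. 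Inserting this into the Cauchy--Schwarz estimate \eqref{CSch} for $\Sigma_d^2$ and summing over $d\mid v$ produces \eqref{hardcase3}. Combining with \eqref{threecases}, substituting the optimal $\delta$, and using $|vz|_\infty\le \Delta^{1/2}$ together with the crude bound $(Q+\mathcal{L})^4\ll 2^{(Q+\mathcal{L})/2}$ to absorb logarithmic factors into the $2^{Q+\mathcal{L}}$ term will yield the asserted four terms $q^{3Q}\Delta$, $|v|_\infty^{1/2}$, $(Q+\mathcal{L})^4 q^Q\Delta^{1/4}$, and $2^{Q+\mathcal{L}}q^{3Q/2}\Delta^{1/2}$.
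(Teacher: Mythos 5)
Your proposal accurately retraces the paper's own route: Lemma \ref{Bound4} with $\CS=\{h^2:\deg h=Q\}$, restriction to $y\in\BF_q(t)_\infty^2$, the split $\Pi\ll\Pi_1+\Pi_2$, double Poisson summation producing \eqref{Dsum5}, disposal of the simple cases as in \eqref{threecases}, exact evaluation of the Gauss sums and exponential integrals in the critical case, the Kloosterman reciprocity plus summation by parts leading to $W=(a\overline{v^\ast}+f_k^\ast)/(4\ell^\ast)$, the Weyl shift and the counting argument \eqref{lisa}, Cauchy--Schwarz, the divisor sum over $d\mid v$, and finally the minimal admissible $\delta$ together with $|vz|_\infty\le\Delta^{1/2}$ and $(Q+\mathcal{L})^4\ll 2^{(Q+\mathcal{L})/2}$. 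This is essentially identical to Sections 11--16 of the paper, so the plan is correct and follows the same approach.
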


\section{Proof of Theorem \ref{themainresult}}
Finally, we are ready to prove Theorem \ref{themainresult}. We 
use Proposition \ref{FirstPx} if $|v|_{\infty}> q^{Q}$ and Proposition \ref{thirdPx} if $|v|_{\infty}\le 
q^Q$ to get
\begin{equation} \label{burns}
\begin{split}
P\left(\frac{u}{v}+z\right)
\ll_q  & q^{3Q}\Delta+(Q+\mathcal{L})^4q^{Q}\Delta^{1/4}
+ 2^{Q+\mathcal{L}}\left(q^{Q/2}+q^{3Q/2}\Delta^{1/2}\right).
\end{split}
\end{equation}
Alternatively, we have the estimate
\begin{equation} \label{homer}
P\left(\frac{u}{v}+z\right) \ll_q  1+2^{Q+\mathcal{L}}\left(q^{2Q}\Delta^{1/2}+q^{3Q}\Delta\right)
\end{equation}
from Proposition \ref{secondPx}. If $q^{2Q}\le \Delta^{-1}\le q^{3Q}$, then \eqref{burns} gives the estimate
\begin{equation} \label{burns2}
\begin{split}
P\left(\frac{u}{v}+z\right)
\ll_q  & 2^{Q+\mathcal{L}}\left(q^{3Q}\Delta+q^{Q/2}\right),
\end{split}
\end{equation}
and if $q^{3Q}\le \Delta^{-1}\le q^{4Q}$, then \eqref{homer} gives the estimate
\begin{equation} \label{homer2}
P\left(\frac{u}{v}+z\right) \ll_q  2^{Q+\mathcal{L}}\left(q^{3Q}\Delta+q^{2Q}\Delta^{1/2}\right).
\end{equation}
We observe that 
$$
Q^{1/2}\le q^{2Q}\Delta^{1/2} \Leftrightarrow \Delta^{-1}\le q^{3Q}. 
$$
Hence, in the range $q^{2Q}\le \Delta^{-1}\le q^{4Q}$, we have
\begin{equation} \label{millhouse}
P\left(\frac{u}{v}+z\right) \ll_q  2^{Q+\mathcal{L}}\left(q^{3Q}\Delta+\min\left\{q^{Q/2},q^{2Q}\Delta^{1/2}\right\}\right).
\end{equation}
Now Theorem \ref{themainresult} follows from \eqref{millhouse} and Lemmas \ref{LS2} and \ref{K-P} upon taking $\Delta:=q^{-N}$. $\Box$


\begin{thebibliography}{cccc}
\bibitem{Bai} S, Baier, {\it On the large sieve with sparse sets of moduli}. J. Ramanujan Math. Soc. 21 (2006), no. 3, 279--295.
\bibitem{BaBaSi} S. Baier; A. Bansal; R.K. Singh,{\it Divisibility problems for function fields}, Acta Math. Hungar. 156 (2018), no. 2, 435--448.  
\bibitem{BaLyZh} S. Baier; S.B. Lynch; L. Zhao, {\it A lower bound for the large sieve with square moduli}, Bull. Aust. Math. Soc. 100 (2019), no. 2, 225–229. 
\bibitem{BaSi} S. Baier; R.K. Singh, {\it Large sieve inequality with power moduli for function fields}, J. Number Theory 196 (2019), 1--13.
\bibitem{BaSierr} S. Baier; R.K. Singh, {\it Erratum to the paper ``Large sieve inequality with power moduli for function fields''}, to appear in J. Number Theory.
\bibitem{BaZh} S. Baier; L. Zhao, {\it An improvement for the large sieve for square moduli}, J. Number Theory 128 (2008), no. 1, 154--174.
\bibitem{BM} B. de Mathan, {\it Approximations diophantiennes dans un corps local}, (French)  Bull. Soc. Math. France Suppl. M\'em.  
21  1970 93 pp.
\bibitem{CaFr} J.W.S. Cassels, A. Fr\"ohlich, {\it Algebraic Number Theory}, Academic Press (1967). 
\bibitem{GaGh} A. Ganguly; A. Ghosh, {\it Dirichlet's theorem in function fields}, 
Canad. J. Math. 69 (2017), no. 3, 532--547. 
\bibitem{Hsu} C.-N. Hsu, {\it A large sieve inequality for rational function fields}, J. Number Theory 58 (1996), no. 2, 267--287. 
\bibitem{Zhao1} L. Zhao, {\it Large sieve inequality with characters to square moduli}, Acta Arith. 112 (2004), no. 3, 297--308.
\bibitem{Zh} L. Zhao, {\it An improvement of a large sieve inequality in high dimensions}, Mathematika 52 (2005), no. 1-2, 93--100.
\end{thebibliography}
\end{document}